\documentclass[11pt]{amsart}
\usepackage{amsthm}
\newtheorem{theorem}{Theorem}[section]
\newtheorem{corollary}[theorem]{Corollary}
\newtheorem{lemma}[theorem]{Lemma}
\newtheorem{proposition}[theorem]{Proposition}
\newtheorem{definition}[theorem]{Definition}
\makeatletter
\def\th@newremark{\th@remark\thm@headfont{\bfseries}}
\makeatletter
\usepackage{enumitem}
\usepackage{fancyhdr}
\theoremstyle{newremark}
\newtheorem{remark}{Remark}
\usepackage{graphicx} 
\usepackage{amsmath,color,mathtools}
\usepackage{amssymb, bbm}
\usepackage[margin=0.8in]{geometry}
\usepackage[pdfencoding=auto,unicode,psdextra]{hyperref}

\title{Monotonicity of functionals associated to product measures via their Fourier transform and applications}
\author{Andreas Malliaris}
\address{Institut de Mathématiques de Toulouse (UMR 5219). Université de Toulouse
\& CNRS. UPS, F-31062 Toulouse Cedex 09, France}
\email{andreas.malliaris@math.univ-toulouse.fr}
\date{}
\begin{document}

\pagestyle{plain}

\begin{abstract}
    Let $\mu$ be a probability measure on $\mathbb{R}$. We give conditions on the Fourier transform of its density for functionals of the form $H(a)=\int_{\mathbb{R}^n}h(\langle a,x\rangle)\mu^n(dx)$ to be Schur monotone. As applications, we put certain known and new results under the same umbrella, given by a condition on the Fourier transform of the density. These results include certain moment comparisons for independent and identically distributed random vectors, when the norm is given by intersection bodies, and the corresponding vector Khinchin inequalities.  We also extend the discussion to higher dimensions.
\end{abstract}

\maketitle

\section{Introduction}

\subsection{Motivation} The motivation for the present note comes from two classical extremal problems, that in recent years have attracted a lot of attention. The first one deals with extremal sections of convex bodies and the second one is about comparison of moments of sums of independent random variables.

The first problem is as follows: given $K$ a convex body in $\mathbb{R}^n$, what are the maximal and minimal values of the $(n-1)$-dimensional volume of central sections $|K\cap \theta^\perp|$, when $\theta\in S^{n-1}$? Here  we denote $\theta^\perp:=\{x\in \mathbb{R}^n  :  \langle x,\theta\rangle=0\}$.

A significant focus was given to the case when $K$ is the unit ball of the space $l_p^n$, which we denote by $B_p^n:= \{x\in \mathbb{R}^n : |x_1|^p+\cdots+|x_n|^p\leq 1\}$, for $p\in [1,\infty)$ and $B_\infty^n=\{x\in \mathbb{R}^n : \max |x_i|\leq 1\}$. A mixture of geometric, analytic and probabilistic techniques were used for determining the behavior of $|B_p^n\cap \theta^\perp|$. We refer to the classical works \cite{ball}, \cite{hadwiger} and \cite{hensley} for $p=\infty$, to \cite{koldobsky98}, \cite{MP} and to the more recent \cite{eskenazis} for unifying and generalizing several results in the ``higher" dimensional setting and to the recent survey \cite{NT} for a more complete picture on sections and projections of $p$-balls. The problem is solved for $p\in (0,2]$, however the maximal section of $B_p^n$ was found for $p$ large enough by Eskenazis, Nayar and Tkocz only very recently \cite{ENTDuke}. The above questions prompt us to consider the following scenario:

Let $\mu$ be a probability measure on $\mathbb{R}^n$, for any Borel set $A \subset \mathbb{R}^n$ define its $\mu$-boundary measure (or lower Minkowski content) via 
\[
\mu^+(A) = \liminf_{\varepsilon \to 0^+} \frac{
\mu(A + \varepsilon B_2^n) - \mu(A)}{\varepsilon}\cdot
\]
Then if we denote $\theta^\perp_+=\{x\in \mathbb{R}^n  :  \langle x,\theta\rangle\geq0\}$, it is natural to ask the following question, which is the starting point of the investigation in the present note: \par
\textit{Question.} For which direction $\theta\in S^{n-1}$ is the maximum and for which the minimum of
\begin{equation}\label{centralmeas}
\mu^+(\theta^\perp_+)
\end{equation}
achieved?

The answer to this question in complete generality would solve of course all the geometric problems around sections of convex bodies. The Fourier analytic approach of Ball for the case $p=\infty$, along  with the independence (product structure) of the cube, suggest to restrict the question to the setting of product probability measures. Even though for the case of $p$-balls the product structure seems irrelevant, Meyer and Pajor in \cite{MP} stated it equivalently as a problem for a product measure, that was used by Koldobsky to solve the problem for $p\in (0,2]$.
Koldobsky proved even more, i.e. that the quantity $|B_p^n\cap\theta^\perp|$ is Schur convex in $(\theta_1^2,\ldots,\theta_n^2)$, a result further generalized by Eskenazis, see Theorem \ref{eskethm}. The problem of sections of product measures was considered also by Barthe in \cite{bartheextremal}, which he approached using the notion of peakedness and also for subspaces of higher co-dimension.

The second problem is stated in probabilistic terms: given $X_1,\ldots,X_n$ i.i.d. random variables, which directions $\theta\in S^{n-1}$ extremize the $p$-norm of
\[
\|\theta_1X_1+\cdots+\theta_nX_n\|_p:= \left(\mathbb{E} |\theta_1X_1+\cdots+\theta_nX_n|^p \right)^{\frac{1}{p}} \ ?
\]

When the independent random variables are uniformly distributed on $[-1,1]$, or Gaussian mixtures, these problems were studied in \cite{latole} and \cite{ENT} respectively. In these papers, the authors proved something more, namely that the quantity
\[
\mathbb{E}|\theta_1X_1+\cdots+\theta_nX_n|^p
\]
is Schur monotone in $(\theta_1^2,\ldots,\theta_n^2)$, depending on $p$. 

Moreover, functions of the form 
\[
\mathbb{E}\Phi(\theta_1U_1+\cdots+\theta_nU_n)
\]
were considered in \cite{culverhouse}, for $U_i$ independent random vectors uniformly distributed on the unit sphere and $\Phi$ being bi-subharmonic, where again a Schur monotonicity result was proven.

The moment comparison question, as we shall see below, is also related with finding sharp constants in Khinchin inequalities, of the form 
\[
\|S\|_p\leq C_{p,q}\|S\|_q,
\]
 where $S$ is a normalized sum of random variables as above. For more references we refer to the above articles.

\subsection{Main results and organization}
Inspired by the above questions it seems natural to consider more general functionals of the form \[\int_{\mathbb{R}^n}h(\langle\theta,x \rangle)\mu^n(dx),\] for $\theta \in S^{n-1}$ and suitable functions $h$, and study the question on finding the maximizer and minimizer of such functional, depending on the probability measure involved.
The main observation - that we state here for measures in one dimension - of the present note is the following, and can be applied to both problems stated in the introduction: 
\begin{theorem} \label{mainthmintro}
    Let $\mu$ be a probability measure on $\mathbb{R}$ with an even density $\phi$ which has positive and integrable Fourier transform and $h:\mathbb{R}\to \mathbb{R}$ be an even function, in $L^2$ and $\Hat{h}$ positive. Let  
    \[
    H(a)=H(a_1,\ldots,a_n)=\int_{\mathbb{R}^{n}}h\left( a_1^{\frac{1}{q}}x_1+\cdots+a_n^{\frac{1}{q}}x_n\right)\mu^n(dx) , \:\:\: a\in [0,+\infty)^n.
    \]
$\bullet$ If $\Hat{\phi}(t^{\frac{1}{q}})$, for $t\geq0$ is log-convex, then $H$ is log-convex.

\noindent$\bullet$ If $\Hat{\phi}(t^{\frac{1}{q}})$, for $t\geq0$ is log-concave, then $H$ is Schur concave.
    
\end{theorem}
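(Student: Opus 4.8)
The plan is to pass to the Fourier side. Writing $S=\sum_{i=1}^n a_i^{1/q}X_i$ for i.i.d.\ $X_i$ with density $\phi$, the characteristic function of $S$ is $\prod_{i=1}^n\hat\phi(a_i^{1/q}t)$, since $\hat\phi$ is the characteristic function of $\mu$. Because $h\in L^2$ and, as I will check, the density $f_S$ of $S$ also lies in $L^2$, Parseval's identity turns $H(a)=\mathbb{E}[h(S)]=\int_{\mathbb R}h\,f_S$ into
\[
H(a)=c\int_{\mathbb R}\hat h(t)\prod_{i=1}^n\hat\phi(a_i^{1/q}t)\,dt=2c\int_0^\infty \hat h(t)\prod_{i=1}^n\hat\phi(a_i^{1/q}t)\,dt
\]
for a normalizing constant $c>0$, using that all transforms are real and even. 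Setting $g(s)=\hat\phi(s^{1/q})$ for $s\ge 0$ --- the very function assumed log-convex or log-concave --- and noting $\hat\phi(a_i^{1/q}t)=g(a_it^q)$, this becomes
\[
H(a)=2c\int_0^\infty \hat h(t)\,P_t(a)\,dt,\qquad P_t(a):=\prod_{i=1}^n g(a_it^q).
\]
The weight $\hat h(t)\ge 0$ and the factors $g(a_it^q)=\hat\phi(a_i^{1/q}t)>0$ are positive by hypothesis, which is exactly what makes the argument run.

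Both conclusions then reduce to pointwise-in-$t$ statements about $P_t$. Observe $\log P_t(a)=\sum_{i=1}^n\log g(a_it^q)$, and each map $a_i\mapsto a_it^q$ is linear. If $g$ is log-convex, every summand is convex in $a_i$, so $\log P_t$ is convex, i.e.\ $P_t$ is log-convex on $[0,\infty)^n$ for each $t$. Log-convexity survives positive integration: applying H\"older's inequality with exponents $1/\lambda,1/(1-\lambda)$ to the pointwise bound $P_t(\lambda a^0+(1-\lambda)a^1)\le P_t(a^0)^\lambda P_t(a^1)^{1-\lambda}$, integrated against $\hat h(t)\,dt$, gives $H(\lambda a^0+(1-\lambda)a^1)\le H(a^0)^\lambda H(a^1)^{1-\lambda}$, the log-convexity of $H$. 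If instead $g$ is log-concave, then $\log g$ is concave, and since $a\prec b$ implies $(a_it^q)\prec(b_it^q)$, Karamata's (Hardy--Littlewood--P\'olya) inequality gives $\sum_i\log g(a_it^q)\ge\sum_i\log g(b_it^q)$; as $\exp$ is increasing, $P_t$ is Schur-concave for every $t$. Integrating $P_t(a)\ge P_t(b)$ against $\hat h(t)\,dt\ge 0$ yields $H(a)\ge H(b)$, and $H$ is visibly symmetric, so $H$ is Schur-concave.

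The main obstacle is the rigorous justification of the Parseval representation, the monotonicity itself being soft. I must verify $f_S\in L^2$: since $0\le\hat\phi\le\hat\phi(0)=1$ and $\hat\phi\in L^1$, for any $a$ with some $a_{i_0}>0$ the product $\prod_i\hat\phi(a_i^{1/q}t)$ is dominated by the single bounded, integrable factor $\hat\phi(a_{i_0}^{1/q}t)$, so $\widehat{f_S}\in L^1\cap L^\infty\subset L^2$; hence $f_S$ is continuous, bounded and in $L^2$, Cauchy--Schwarz makes $hf_S\in L^1$, and the identity $H(a)=\int hf_S$ and its Fourier form are legitimate. The degenerate directions (some $a_i=0$, or all zero) are handled by the same estimate using a surviving positive coordinate, or by continuity from the interior together with the remark that log-convexity and Schur-concavity are closed conditions stable under the boundary limit. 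A secondary point is matching the constant $c$ and the evenness reduction to the paper's Fourier convention and checking that the substitution $g(s)=\hat\phi(s^{1/q})$ aligns precisely with the stated hypothesis; these are routine once the integrability above is secured.
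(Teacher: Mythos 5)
Your proof is correct and follows essentially the same route as the paper: both rest on the representation $H(a)=\int \hat h(t)\prod_{i=1}^n\hat\phi(a_i^{1/q}t)\,dt$, followed by H\"older's inequality in the log-convex case and a majorization argument in the log-concave case (your appeal to Karamata is exactly the content of the paper's Lemma \ref{Schurlemma}). The only cosmetic difference is that you obtain the Fourier representation via Plancherel applied to the density of the sum, whereas the paper derives it through the coarea formula and the section function; your $L^2$/integrability justification via domination of the product by a single factor $\hat\phi(a_{i_0}^{1/q}t)$ is sound.
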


In Section \ref{Section main} we prove Theorem \ref{mainthmintro} in a more general form and in fact a converse statement as well, along with the analogue for sections. As a corollary we also get the following result which complements and extends in some cases Theorem 3 of \cite{ENT}.

\begin{corollary} \label{corollary intro one dim}
     Let $q>0$ and $X_1,\ldots,X_n$ be i.i.d. random variables distributed according to $\mu(dx)=\phi(x)dx$, where $\phi$ is even with integrable and positive Fourier transform $\hat{\phi}$ satisfying $\Hat{\phi}(t^\frac{1}{q})$ is log-convex on $(0,+\infty)$.
    
Then for each $\lambda>0$, the function, defined for $a\in (0,+\infty)^n$,
\begin{equation} \label{boosted 1 dim}
 \mathbb{E} e^{-\lambda|\sqrt[q]{a_1}X_1+\cdots+ \sqrt[q]{a_1}X_n|^p}
\end{equation}
is log-convex.
    
    In particular, assuming that   $\mathbb{E} |X_1|^p<+\infty$, for some $p\in (0,2]$ the function 
\[
\mathbb{E} |a_1^{\frac{1}{q}}X_1+\cdots+ a_n^{\frac{1}{q}}X_n|^p
\]
is concave in $a$.

While, if  $\Hat{\phi}(t^\frac{1}{q})$ is log-concave on $(0,+\infty)$ one has Schur concavity and Schur convexity respectively.
\end{corollary}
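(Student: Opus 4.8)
The plan is to obtain both statements from Theorem \ref{mainthmintro} applied to a carefully chosen family of admissible test functions, and then to recover the power $|\cdot|^p$ by a limiting procedure. Note that $h(y)=|y|^p$ is not itself admissible in Theorem \ref{mainthmintro} (it is not in $L^2$, and its Fourier transform is only a tempered distribution), which is exactly why the corollary is phrased through the regularized functional \eqref{boosted 1 dim}. So I would fix $p\in(0,2]$ and $\lambda>0$ and take $h_\lambda(y)=e^{-\lambda|y|^p}$, so that $H(a)$ coincides with the functional in \eqref{boosted 1 dim}, and write $S=\sqrt[q]{a_1}X_1+\cdots+\sqrt[q]{a_n}X_n$ throughout.

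The first step is to check the hypotheses of Theorem \ref{mainthmintro} for $h_\lambda$. It is manifestly even, and it lies in $L^2(\mathbb{R})$ since $e^{-2\lambda|y|^p}$ is integrable for every $p>0$. The key point — and the reason the range $p\in(0,2]$ appears — is the positivity of $\widehat{h_\lambda}$. Here I would invoke the theory of symmetric stable laws: for $p\in(0,2]$ the function $y\mapsto e^{-\lambda|y|^p}$ is the characteristic function of a symmetric $p$-stable distribution, whose density $g$ is strictly positive on $\mathbb{R}$; by Fourier inversion $\widehat{h_\lambda}$ is a positive multiple of $g$, hence strictly positive. (For $p>2$ this fails, since $e^{-\lambda|y|^p}$ is no longer positive definite, which is precisely what confines the statement to $p\le 2$.) With these hypotheses verified, Theorem \ref{mainthmintro} applies directly: log-convexity of $\widehat\phi(t^{1/q})$ yields log-convexity of $H$, which is the first assertion, whereas log-concavity of $\widehat\phi(t^{1/q})$ yields Schur concavity of $H$.

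The second step passes from \eqref{boosted 1 dim} to $\mathbb{E}|S|^p$. Since $\tfrac{1-e^{-\lambda u}}{\lambda}\uparrow u$ monotonically as $\lambda\downarrow 0^{+}$ for every $u\ge 0$, the monotone convergence theorem gives
\[
\mathbb{E}|S|^p=\lim_{\lambda\to 0^{+}}\frac{1-\mathbb{E}e^{-\lambda|S|^p}}{\lambda},
\]
the limit being finite because $\mathbb{E}|X_1|^p<\infty$ forces $\mathbb{E}|S|^p<\infty$. In the log-convex case each $a\mapsto\mathbb{E}e^{-\lambda|S|^p}$ is convex (being log-convex), so $a\mapsto\lambda^{-1}(1-\mathbb{E}e^{-\lambda|S|^p})$ is concave; as concavity is preserved under pointwise limits, $\mathbb{E}|S|^p$ is concave. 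In the log-concave case $\mathbb{E}e^{-\lambda|S|^p}$ is Schur concave, so $\lambda^{-1}(1-\mathbb{E}e^{-\lambda|S|^p})$ is Schur convex, and Schur convexity is likewise stable under pointwise limits, giving Schur convexity of $\mathbb{E}|S|^p$. This establishes the ``in particular'' clause and the final ``respectively'' clause at once.

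The main obstacle I anticipate is the positivity of $\widehat{h_\lambda}$: everything downstream is soft (monotone convergence, and stability of convexity and Schur convexity under pointwise limits), but the admissibility of $h_\lambda$ in Theorem \ref{mainthmintro} rests on the nontrivial fact that $e^{-\lambda|y|^p}$ has a nonnegative Fourier transform exactly for $p\in(0,2]$. I would also record that the moment hypothesis is not vacuous where it matters: a Taylor expansion of $\log\widehat\phi(t^{1/q})$ near the origin shows that, when $q<2$, log-convexity is incompatible with a finite second moment, which consistently confines the delicate endpoint $p=2$ to the regime $q\ge 2$ in which $\mathbb{E}|S|^2$ is genuinely concave.
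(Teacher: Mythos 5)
Your proposal is correct and follows essentially the same route as the paper: it verifies that $h_\lambda(y)=e^{-\lambda|y|^p}$ is an admissible positive definite (equivalently, $L^2$ with positive Fourier transform) test function for $p\in(0,2]$, applies the main theorem to get log-convexity (resp.\ Schur concavity) of \eqref{boosted 1 dim}, and then recovers the $p$-th moment by the small-$\lambda$ linearization of $\lambda\mapsto \mathbb{E}e^{-\lambda|S|^p}$. The paper phrases this last step as differentiating at $\lambda=0$ after noting both sides of the convexity inequality agree there, which is the same difference-quotient argument you write out via monotone convergence.
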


In \cite{ENT}, the result for the $p$ moments is proven for Gaussian mixtures and $q=2$, for the whole range of exponents. The statement (\ref{boosted 1 dim}) is formally stronger, since it allows infinite moments, besides the larger class of densities. The results for the whole range of exponents $p$ and $q$ (especially when $q=2$) seem to be new in the case when the log-concavity assumption is satisfied.

An important application of such result is to obtain sharp Khinchin inequalities, as indicated previously, that one can get for all the measures satisfying the condition that $t\mapsto \hat{\phi}(\sqrt{t})$ is either log-concave or log-convex and is the content of Corollary \ref{corollary khinchine log-concave}. Again, apart from the Gaussian mixtures in the log-convex case, the rest seem new. Some further applications are discussed in Section \ref{Section main}, that mainly concern the one dimensional case, but also examples of densities satisfying the assumptions for each $q$ in one and higher dimensions.

In section \ref{Section High dim} we investigate some applications of a higher dimensional analogue of Theorem \ref{mainthmintro}.
The first part of it is mainly concerned with the $p$-balls generated from a convex body $K\subset \mathbb{R}^d$, defined as 
\[B_p^n(K):=\{x=(x_1,\ldots,x_n)\in \mathbb{R}^{nd} : \|x_1\|^p_K+\cdots+\|x_n\|_K^p\leq1\}.\]
Following the work of Barthe, Guédon, Mendelson and Naor \cite{BGMN}, we give a representation of the uniform probability measure on $B_p^n(K)$ which might be of independent interest. Then we give some applications of a higher dimensional analogue of Theorem \ref{mainthmintro}, when $K$ is the unit ball of a finite dimensional subspace of $L_p$, to (block) sections of $B_p^n(K)$, which is Theorem \ref{eskethm}, and the main result of \cite{eskenazis}. The advantage here is a somewhat quicker argument by essentially showing that the method of Meyer-Pajor and Koldobsky works in that realm as well with a slightly stronger statement, while it is observed that the argument given in \cite{eskenazis} via comparison of Laplace transforms is again equivalent to a problem of sections of product measures, see Remark \ref{remark laplace is section}. Moreover we show moment comparison of random vectors (Corollary \ref{highmom}) that is the higher dimensional analogue of Corollary \ref{corollary intro one dim}. This, in conjunction with the representation of the uniform probability measure on $B_p^n(K)$ gives moment comparisons for a random vector uniformly distributed in $B_p^n(K)$ and is the content of Corollary \ref{corollary moments random vector}.

Moreover, in section \ref{Section High dim} we prove comparison (and actually log-convexity) results using norms induced by intersection bodies. These were introduced by Lutwak in \cite{Lutwak}, and are defined as follows: a symmetric star body $K\subset \mathbb{R}^d$ is called the intersection body of a body $L\subset \mathbb{R}^d$, if for each direction $\theta\in S^{d-1}$ it holds that $\|\theta\|_K=|L\cap\theta|^{-1}$. In particular one has the following:

\begin{theorem} \label{intersectionintro}
     Let $X_1,\ldots,X_n$ be independent random vectors in $\mathbb{R}^d$ distributed according to the probability measure $\mu(dx)=\phi(x)dx$. Suppose that $\phi$ is even and has positive integrable Fourier transform and be such that for each $t\in \mathbb{R}^d$ the function $[0,+\infty)\ni r\mapsto\Hat{\phi}(\sqrt{r}t)$ is log-convex. Assume moreover that for each $N>0$, $\mathbb{E}_\mu|X_1|^N<\infty$. Let also $K$ be a symmetric star body on $\mathbb{R}^d$ that is the intersection body of $L$.
    
    Then, if $\mathbb{E}\|X_1\|_K^{-1}<\infty$, the function
\[
\mathbb{E} \|\sqrt{a_1}X_1+\cdots+\sqrt{a_n}X_n\|_K^{-1}
\]
is log-convex in $(a_1,\ldots,a_n)\in [0,+\infty)^n$.
\end{theorem}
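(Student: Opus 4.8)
The plan is to reduce the statement, via Parseval's identity, to the log-convexity mechanism behind the higher-dimensional analogue of Theorem \ref{mainthmintro}, applied to the function $H(x)=\|x\|_K^{-1}$; the admissibility of this $H$ will rest on the single geometric fact that $K$ is an intersection body. Write $S_a=\sqrt{a_1}X_1+\cdots+\sqrt{a_n}X_n$. Since the $X_i$ are independent with common characteristic function $\hat\phi$, the density $f_{S_a}$ of $S_a$ has Fourier transform
\[
\widehat{f_{S_a}}(\xi)=\prod_{i=1}^n\hat\phi(\sqrt{a_i}\,\xi),\qquad \xi\in\mathbb{R}^d.
\]
On the geometric side, because $K$ is the intersection body of $L$, the function $x\mapsto\|x\|_K^{-1}$ is, by Lutwak's definition \cite{Lutwak} together with $(-1)$-homogeneity, nothing but $x\mapsto|x|^{-1}\,|L\cap x^\perp|$, and by the Fourier-analytic characterization of intersection bodies (Koldobsky) it is a positive definite distribution on $\mathbb{R}^d$; equivalently, by the Bochner--Schwartz theorem, its distributional Fourier transform is a nonnegative tempered measure $\nu$. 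Applying Parseval's identity to the even, real pair $\|\cdot\|_K^{-1}$ and $f_{S_a}$ then gives, with $c=(2\pi)^{-d}$,
\[
\mathbb{E}\,\|S_a\|_K^{-1}=\int_{\mathbb{R}^d}\|x\|_K^{-1}f_{S_a}(x)\,dx=c\int_{\mathbb{R}^d}\prod_{i=1}^n\hat\phi(\sqrt{a_i}\,\xi)\,d\nu(\xi).
\]

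The log-convexity now follows from two elementary observations. For fixed $\xi$, taking logarithms gives $\log\prod_i\hat\phi(\sqrt{a_i}\,\xi)=\sum_i\log\hat\phi(\sqrt{a_i}\,\xi)$; each summand depends on a single coordinate $a_i$, and the hypothesis (with $t=\xi$) states precisely that $r\mapsto\hat\phi(\sqrt r\,\xi)$ is log-convex on $[0,+\infty)$, so each summand is convex in $a_i$ and the whole sum is convex on $[0,+\infty)^n$. Hence for every fixed $\xi$ the integrand $a\mapsto\prod_i\hat\phi(\sqrt{a_i}\,\xi)$ is log-convex, where the positivity of $\hat\phi$ is used so that the logarithm is defined. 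Finally, log-convexity is preserved under integration against a nonnegative measure: if $\lambda\in[0,1]$ and $a,b\in[0,+\infty)^n$, then combining the pointwise inequality $F(\lambda a+(1-\lambda)b,\xi)\le F(a,\xi)^{\lambda}F(b,\xi)^{1-\lambda}$ with Hölder's inequality in the pair $L^{1/\lambda}(\nu)$, $L^{1/(1-\lambda)}(\nu)$ reproduces the same inequality after integration. Applying this with $F(a,\xi)=\prod_i\hat\phi(\sqrt{a_i}\,\xi)$ and the measure $\nu$ shows that $a\mapsto\mathbb{E}\,\|S_a\|_K^{-1}$ is log-convex, as desired.

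The main obstacle is the rigorous justification of the Parseval step, since $\|\cdot\|_K^{-1}$ is a homogeneous distribution (singular at the origin, only mildly decaying at infinity) while $f_{S_a}$ is merely a bounded integrable density with all moments finite, not a priori a Schwartz function. I would handle this by approximation: regularise $f_{S_a}$ by convolution with a Gaussian to land in the Schwartz class, apply the Parseval identity for positive definite distributions there, and pass to the limit, using local integrability of $\|\cdot\|_K^{-1}$ (valid for $d\ge2$) together with boundedness of $f_{S_a}$ and the assumption $\mathbb{E}\|X_1\|_K^{-1}<\infty$ to control the left-hand side, and the nonnegativity of the integrand together with the tempered growth of $\nu$ to control the right-hand side by monotone convergence. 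The hypothesis $\mathbb{E}_\mu|X_1|^N<\infty$ for all $N$ guarantees that $\hat\phi\in C^\infty$ and, crucially, that the products $\prod_i\hat\phi(\sqrt{a_i}\,\xi)$ are $\nu$-integrable, so that the integral representation is finite throughout $[0,+\infty)^n$. Once this representation is secured, the two observations above---which are exactly the content of the higher-dimensional analogue of Theorem \ref{mainthmintro} specialised to the admissible function $H=\|\cdot\|_K^{-1}$---complete the proof.
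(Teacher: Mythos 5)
Your proposal is correct and follows essentially the same route as the paper: both express $\mathbb{E}\,\|S_a\|_K^{-1}$ as an integral of $\prod_i\hat\phi(\sqrt{a_i}\,\xi)$ against the nonnegative tempered measure arising from the positive definiteness of $\|\cdot\|_K^{-1}$ (the paper writes this measure explicitly via the $L_{-1}$-embedding formula, you invoke Bochner--Schwartz), then conclude by pointwise log-convexity plus H\"older. Your regularisation step (mollifying by a Gaussian and using the all-moments hypothesis to land in the Schwartz class before passing to the limit) is exactly the device the paper uses, applied to $\phi$ itself rather than to the density of $S_a$, which is an immaterial difference.
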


The crucial property here is that $\|\cdot\|_K^{-1}$ represents a positive definite distribution.
This in general will be true when $\|\cdot\|_K^{-1}$ is replaced by $\|\cdot\|^{-l}$ and $(\mathbb{R}^d,\|\cdot\|)$ is a quasi-normed space that embeds in $L_{-l}$.
For the most general results see Theorem \ref{momentsweak}.
Such moment comparison can give sharp vector Khinchine inequalities for negative moments, once the convex body is an ellipsoid. This is the content of Corollary \ref{khinchin}. Khinchine inequalities with negative moments were proved also in \cite{chasap} in connection to Ball's integral inequality.

The final section contains comments related to Theorem \ref{mainthmintro}, on higher co-dimensional sections and on mixtures of random variables highlighting the difference in the arguments compared to the Fourier approach emphasized in the rest note.

 All the relevant notions and certain auxiliary results are introduced in the following Section \ref{section prelims}.

\section{Preliminary notions} \label{section prelims}
If $C\subset \mathbb{R}^d$ is convex, then a function $f:C\to [0,+\infty)$ is called log-convex (resp. log-concave) if $\log(f):C\to\mathbb{R}\cup \{-\infty\}$ is a convex (resp. concave) function. The Euclidean norm of $v\in \mathbb{R}^d$ is denoted by $|v|$ and if $A$ is a Borel subset of $\mathbb{R}^d$, by $|A|$ we mean its Lebesgue measure. The group of permutations of $[n]=\{1,\ldots,n\}$ is denoted by $S_n$.

Our convention for the Fourier transform is that 
\[
\hat{f}(x) = \int_{\mathbb{R}^d}f(y)e^{-2\pi i \langle x,y\rangle}dy,
\]
while if $\nu$ is a finite Borel measure on $\mathbb{R}^d$ we denote $\hat{\nu}$ its Fourier transform
\[
\hat{\nu}(x)= \int_{\mathbb{R}^d}e^{-2\pi i \langle x,y\rangle}\nu(dy).
\]

Below we make use of the coarea formula (see e.g. \cite[Theorem 2.86]{giaquintabook}): If $l\leq k$ and $F:\mathbb{R}^k\to \mathbb{R}^l$ is $C^1$, then for all measurable functions $u:\mathbb{R}^k\to \mathbb{R}$ it holds that
\[
\int_{\mathbb{R}^k} u(x)J_F(x)dx= \int_{\mathbb{R}^l}\int_{\{F=z\}}u(x)\mathcal{H}^{k-l}(dx)dz,
\]
provided the expression on the left hand-side is integrable, where $J_F(x):=\sqrt{\det\left(DF(x)DF(x)^t\right)}$ is the Jacobian of $F$ at $x$ and $\mathcal{H}^{k-l}$ is the $(k-l)$-dimensional Hausdorff measure.

\subsection{Schur monotonicity} \label{Schur background}
 We now recall the notion of majorization and Schur order. See e.g. the book \cite{Marshall-book}.

 Let $x,y\in \mathbb{R}^n$. We say that $y$ majorizes $x$ and we write $x \prec y$ if, after rearranging the terms so that $x_1\geq \cdots \geq x_n$  and $y_1\geq \cdots \geq y_n$, it holds that for all $i=1,\ldots,n-1$ $x_1+\cdots+x_i\leq y_1+\cdots+y_i$ and $x_1+\cdots+x_n=y_1+\cdots+y_n$.

 Recall that $P$ is a doubly stochastic matrix if it has non-negative entries and the entries of each row and of each column sum up to 1.
 A useful observation of Hardy and Littlewood (Theorem 2.B.2 in \cite{Marshall-book}) tells us that if $x \prec y$ then there exists a doubly stochastic matrix $P$ such that $x=Py$ (equivalently $x\in \mathrm{conv}\{(y_{\sigma_1},\ldots,y_{\sigma_n}), \sigma \in S_n\}$). The converse of the statement is obviously true as well.
 
  Moreover, we will use the fact that, if $x\in \mathbb{R}^n$ is such that $x_1+\cdots+x_n=1$ and $x_i\geq0$ for each $i$, then $(\frac{1}{n},\ldots,\frac{1}{n})\prec x \prec (1,0,\ldots,0)$.
 
A function will be called Schur monotone if it is monotone with respect to this order. In particular, a real-valued function $f \colon A \subset \mathbb{R}^n \to \mathbb{R} $ is Schur convex (resp. Schur concave), if for all $x,y\in A$ such that $x \prec y$ it holds $f(x) \leq f(y)$ (resp. $f(x) \geq f(y)$). 
The next statement characterizes Schur monotonicity
and is useful to verify that a function is Schur monotone:

 Let $A\subset \mathbb{R}^n$ be invariant under permutations of coordinates and let $f: A\to \mathbb{R}$ which has first partial derivatives. Then $f$ is Schur convex if and only if for all $x\in A$ and $i,j$
\begin{equation}\label{thm:characterization}
(x_i-x_j)(\partial_if(x)-\partial_jf(x))\geq 0 .
\end{equation}

Moreover it follows from the definitions that a convex function $f$, that is also symmetric in its arguments (i.e. $f(a_1,\ldots,a_n)=f(a_{\sigma(1)},\ldots,a_{\sigma(n)})$ for all $\sigma \in S_n)$ it is Schur convex, while the converse does not hold in general.

When dealing with random variables, the following result confirms the Schur monotonicity:
\begin{proposition}[\cite{Marshall-book} Section 11 Prop.B.2.] \label{exchangable theorem}
    If $X_1,\ldots,X_n$ are exchangeable random variables
and $f: \mathbb{R}^n\to \mathbb{R}$ is a symmetric convex function,
then the function $g$ defined by $g(a_1,\ldots,a_n) = \mathbb{E}f(a_1X_1,\ldots,a_nX_n)$ is symmetric and convex, and in particular it is Schur-convex.
\end{proposition}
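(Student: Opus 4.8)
The plan is to deduce the Schur-convexity from the other two assertions: once we know that $g$ is both symmetric and convex on a permutation-invariant domain, the fact recalled above (that a symmetric convex function is automatically Schur convex) finishes the proof. Thus the real work reduces to establishing the convexity and the symmetry of $g$, the former being the analytic content and the latter a bookkeeping exercise with the exchangeability hypothesis.

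To prove convexity, I would fix a realization $x=(x_1,\ldots,x_n)$ and consider the map $\Phi_x\colon a\mapsto f(a_1x_1,\ldots,a_nx_n)$. Writing $D_x$ for the linear map $a\mapsto (a_1x_1,\ldots,a_nx_n)$, we have $\Phi_x=f\circ D_x$, so $\Phi_x$ is the composition of the convex function $f$ with a linear map and is therefore convex in $a$ for each fixed $x$. Convexity of $g$ then follows by integrating this pointwise inequality: for $a,a'$ and $\lambda\in[0,1]$,
\[
g(\lambda a+(1-\lambda)a')=\mathbb{E}\,\Phi_X\!\left(\lambda a+(1-\lambda)a'\right)\leq \mathbb{E}\bigl[\lambda\Phi_X(a)+(1-\lambda)\Phi_X(a')\bigr]=\lambda g(a)+(1-\lambda)g(a'),
\]
where the inequality uses convexity of $\Phi_X$ for each outcome together with monotonicity and linearity of the expectation.

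For the symmetry, I would fix $\sigma\in S_n$ and compute $g(a_{\sigma(1)},\ldots,a_{\sigma(n)})=\mathbb{E}f(a_{\sigma(1)}X_1,\ldots,a_{\sigma(n)}X_n)$. The exchangeability of $X_1,\ldots,X_n$ means that $(X_1,\ldots,X_n)$ and $(X_{\sigma(1)},\ldots,X_{\sigma(n)})$ have the same law, so I may replace $X_i$ by $X_{\sigma(i)}$ inside the expectation (the coefficients $a_{\sigma(i)}$ being fixed constants), obtaining $\mathbb{E}f(a_{\sigma(1)}X_{\sigma(1)},\ldots,a_{\sigma(n)}X_{\sigma(n)})$. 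The vector $(a_{\sigma(i)}X_{\sigma(i)})_{i=1}^n$ is exactly the image of $(a_jX_j)_{j=1}^n$ under $\sigma$, so the symmetry of $f$ collapses it back to $f(a_1X_1,\ldots,a_nX_n)$, giving $g(a_{\sigma(1)},\ldots,a_{\sigma(n)})=g(a_1,\ldots,a_n)$.

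The main obstacle, though modest, is the combined use of the two distinct symmetry hypotheses: exchangeability is a statement about the joint law of the random variables, while symmetry is a statement about $f$, and one must apply them in the correct order (relabel the $X_i$ first, then invoke symmetry of $f$) to avoid a mismatch. A secondary technical point is the well-definedness of the expectation defining $g$; restricting to the regime where $g$ is finite, or adopting the convention that convex functions may take the value $+\infty$, makes the convexity argument fully rigorous.
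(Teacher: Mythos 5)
Your proof is correct. The paper does not prove this proposition itself --- it is quoted directly from Marshall--Olkin--Arnold \cite{Marshall-book} --- and your argument (convexity of $a\mapsto f(a_1x_1,\ldots,a_nx_n)$ as a composition of $f$ with a linear map, preserved under expectation; symmetry via relabelling the exchangeable $X_i$ and then invoking the symmetry of $f$; Schur convexity from the symmetric-plus-convex fact recalled in Section \ref{Schur background}) is exactly the standard proof given in that reference.
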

Where we say that the random variables $X_1,\ldots,X_n$ are exchangeable if the vector $(X_{\sigma(1)},\ldots,X_{\sigma(n)})$ has distribution independent of $\sigma\in S_n$.

The following classical lemma, for which we provide the proof for completeness, will be useful for our purposes later on:

\begin{lemma} \label{Schurlemma}
    Let $f:[0,+\infty)\to [0,+\infty)$ be log-concave (resp. log-convex). Then for each $n\in \mathbb{N}$ the function $F: [0,+\infty)^n\to [0,+\infty)$ defined as \[F(a_1,\ldots,a_n)=\prod_{i=1}^n f(a_i)\] is Schur concave (resp. Schur convex).
    
    Moreover, if we assume that the function $f$ is continuous the converse holds.
\end{lemma}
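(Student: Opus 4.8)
The plan is to pass to logarithms and reduce everything to an elementary statement about a sum of a one-variable function. Writing $g := \log f$ (with the convention $\log 0 = -\infty$), the product becomes a sum, $\log F(a_1,\ldots,a_n) = \sum_{i=1}^n g(a_i) =: G(a)$. Since $\log$ is increasing and $F = e^{G}$, for $x \prec y$ we have $F(x)\ge F(y)$ iff $G(x)\ge G(y)$; hence $F$ is Schur concave (resp. Schur convex) exactly when the separable symmetric function $G$ is. When $f$ is log-concave, $g$ is concave, so $G$ is a sum of concave functions of single variables and is therefore jointly concave as well as symmetric. The symmetric-convexity criterion recorded in Section \ref{Schur background}, applied to $-G$ (symmetric and convex), then gives that $G$ is Schur concave; the log-convex case is identical with the roles of convex and concave interchanged.

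For completeness I would rederive the underlying majorization step directly rather than only quoting the criterion. If $x \prec y$, the Hardy--Littlewood observation quoted above yields $x \in \mathrm{conv}\{(y_{\sigma(1)},\ldots,y_{\sigma(n)}) : \sigma \in S_n\}$, so $x = \sum_{\sigma} \lambda_\sigma\,(y_{\sigma(1)},\ldots,y_{\sigma(n)})$ with $\lambda_\sigma \ge 0$ and $\sum_\sigma \lambda_\sigma = 1$. Using that $G$ is symmetric, so that $G(y_{\sigma(1)},\ldots,y_{\sigma(n)}) = G(y)$ for every $\sigma$, I would apply Jensen's inequality to the concave $G$:
\[
G(x) = G\Big(\sum_\sigma \lambda_\sigma\,(y_{\sigma(1)},\ldots,y_{\sigma(n)})\Big) \ge \sum_\sigma \lambda_\sigma\,G(y_{\sigma(1)},\ldots,y_{\sigma(n)}) = G(y),
\]
which is precisely Schur concavity of $G$, hence of $F$; in the log-convex case $G$ is convex and the inequality reverses. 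Zeros of $f$ cause no trouble in the log-concave case: with $\log 0 = -\infty$, $g$ is a genuine extended-real concave function, and if some $f(y_i)=0$ the right-hand side is $-\infty$ and the inequality is automatic.

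For the converse I would only need $n=2$ and continuity. For any $a,b \ge 0$ one has the majorization $\big(\tfrac{a+b}{2},\tfrac{a+b}{2}\big) \prec (a,b)$, since the two vectors have equal sum and the equal-coordinate vector is majorized by every vector of that sum. If $F$ is Schur concave this gives $f\big(\tfrac{a+b}{2}\big)^2 = F\big(\tfrac{a+b}{2},\tfrac{a+b}{2}\big) \ge F(a,b) = f(a)f(b)$, i.e. $f$ is midpoint log-concave. In particular $\{f>0\}$ is midpoint-convex, and being the positivity set of a continuous function it is an interval, on which $\log f$ is continuous and midpoint concave, hence concave; thus $f$ is log-concave. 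The Schur-convex case is symmetric, producing $f\big(\tfrac{a+b}{2}\big)^2 \le f(a)f(b)$ and, with continuity, log-convexity.

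The only delicate points are bookkeeping rather than substance, so I do not expect a genuine obstacle. First, one must allow $g=\log f$ to take the value $-\infty$ so that the Jensen step and the reduction survive the zeros of $f$. Second, in the converse one must use continuity to upgrade midpoint log-concavity to full log-concavity, since midpoint convexity alone is insufficient without a regularity hypothesis such as continuity (or even just measurability). Both are standard facts that I would simply invoke.
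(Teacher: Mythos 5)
Your proof is correct and follows essentially the same route as the paper: both directions rest on the Hardy--Littlewood characterization of majorization combined with Jensen's inequality (the paper applies the weighted AM--GM coordinate-wise to $f$ via the doubly stochastic matrix, while you pass to logarithms and apply Jensen to the jointly concave symmetric sum over the convex hull of permuted vectors --- two equivalent formulations of the same step), and the converse via $\bigl(\tfrac{a+b}{2},\tfrac{a+b}{2}\bigr)\prec(a,b)$ plus continuity is identical. Your extra care about the value $-\infty$ and about upgrading midpoint log-concavity is a reasonable refinement of what the paper leaves implicit.
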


\begin{proof}
    Let $a,b\in [0,+\infty)^n$ be such that $a \prec b$. By the Theorem of Hardy and Littlewood there exists a doubly stochastic matrix $P=(p_{ij})$ such that $a=Pb$, and so $a_i=\sum_j p_{ij}b_j$.

    Suppose that $f$ is log-concave, then
    \[
    F(a_1,...,a_n)=\prod^n_{i=1} f\left(\sum^n_{j=1} p_{ij}b_j\right)\geq \prod^n_{i=1}\prod^n_{j=1} f(b_j)^{p_{ij}} = \prod^n_{j=1} f(b_j)=F(b),
    \]
    since the matrix is doubly stochastic, and this shows the Schur concavity of the function. For the log-convexity one proceeds similarly.

    To see the converse, let $n=2$ and $a_1,a_2 > 0$. Then the Schur concavity of $F(x,y)=f(x)f(y)$ along with the fact that $\left( \frac{a_1+a_2}{2},\frac{a_1+a_2}{2}\right)\prec(a_1,a_2)$ gives 
    \[
    f\left( \frac{a_1+a_2}{2}\right)^2\geq f(a_1)f(a_2).
    \]
    Thus if $f$ is continuous we get that $f$ is indeed log-concave.
\end{proof}

\subsection{Positive definite functions and distributions} \label{pos def background}
Here we recall the definition of a positive definite function (see eg \cite{feller}). A function $h : \mathbb{R}^d\to \mathbb{R}$ (or more generally complex-valued) is called positive definite, if for each $k\in \mathbb{N}$ and every $x_1,\ldots,x_k\in \mathbb{R}^d$, the matrix $\left(h(x_i-x_j)\right)_{i,j}$ is positive semi-definite. Bochner's theorem ensures that a continuous function $h: \mathbb{R}^d\to \mathbb{R}$ is positive definite if and only if it is the Fourier transform of a finite (positive) Borel measure on $\mathbb{R}^d$, i.e. there exists a finite positive Borel measure on $\mathbb{R}^d$ such that
\[
h(x)=\int_{\mathbb{R}^d}e^{-2\pi i\langle x,y\rangle}d\nu(y).
\]

The Schwartz space $\mathcal{S}(\mathbb{R}^d)$ is defined to be the set of all (real valued is all we care here) $\psi\in C^\infty(\mathbb{R}^d)$, such that for all $\alpha=(\alpha_1,\ldots,\alpha_d)\in \mathbb{Z}^d_+$ and $N\in \mathbb{N}$, there exists $C_{N,\alpha}>0$ such that
\[
|\partial^\alpha\psi(x)|\leq C_{N,\alpha}\frac{1}{\left(  1+|x|   \right)^N}\cdot 
\]
We note that the following equivalence holds: $\psi\in \mathcal{S}(\mathbb{R}^d)\iff \Hat{\psi}\in \mathcal{S}(\mathbb{R}^d)$, and also the product of two Schwartz functions is again a Schwartz function.

Under the locally convex topology defined by the above sequence seminorms (see \cite{rudin} for more details), the continuous linear functionals on $\mathcal{S}(\mathbb{R}^d)$ are called tempered distributions. For example, a function $f$ that satisfies $\lim_{|x|\to \infty}f(x)/|x|^\gamma=0$, for some $\gamma>0$, gives rise to a tempered distribution $T_f$ via the usual action 
\[
T_f(\psi)=\int_{\mathbb{R}^d}f(x)\psi(x)dx.
\]

By Parseval's formula, one can define the Fourier transform of a tempered distribution $T$, to be the tempered distribution $\Hat{T}$, defined via
\[
\Hat{T}(\psi)=T(\Hat{\psi}) , \text{   for   } \psi \in \mathcal{S}(\mathbb{R}^d).
\]
Finally, a distribution $T$ will be called positive definite if for all non-negative $\psi \in \mathcal{S}(\mathbb{R}^d)$ it holds $\Hat{T}(\psi)\geq 0$.

In that setting Schwartz generalized Bochner's theorem by proving that a positive definite distribution is the Fourier transform of a tempered measure.

\subsection{Convex bodies} \label{convex bodies background}

A set $K\subset \mathbb{R}^d$ is called a convex body if it is convex, compact with non-empty interior and is called a symmetric convex body if in addition $K=-K$. We will say that $K$ is a star body if it is compact and for each $x\in K$ and $t\in [0,1]$, $tx\in K$ and if its Minkowski functional 
\begin{equation} \label{gauge}
\|x\|_K=\inf\{\lambda >0 : x\in \lambda K\}
\end{equation} is continuous on $\mathbb{R}^d$.

For both symmetric star and convex bodies the gauge function (\ref{gauge}) is $1$-homogeneous and continuous and $K=\{x\in \mathbb{R}^d : \|x\|_K\leq 1\}$. It is a norm in the case of a symmetric convex body and when $K$ is star body it is a quasi-norm, i.e. $\|x+y\|_K\leq C(\|x\|_K+\|y\|_K)$ for all $x,y\in \mathbb{R}^d$.

For any star body $K\subset \mathbb{R}^d$ we define the cone (probability) measure $\mu_K$ on $\partial K$ as follows
\[
\mu_K(A) =\frac{|\{tx : t\in[0,1] , x\in A\}|}{|K|}
\]
for $A\subset \partial K$.\par 
The cone measure is also characterized by the following formula that holds for all $f\in L_1(\mathbb{R}^d,dx)$
\begin{equation}\label{eq: polar integration formula}
\int_{\mathbb{R}^d}f(x)dx = d|K|\int_{\partial K}\int_0^\infty f(r\theta) r^{d-1}drd\mu_K(\theta).
\end{equation}

\subsubsection{The body $B_p^n(K)$}Let $K$ be a symmetric star body on $\mathbb{R}^d$ and $\|\cdot\|_K$ be the quasi-norm that it induces. For $p\in (0,+\infty)$ we equip $\mathbb{R}^{nd}$ with the quasi-norm (in general) $\|\cdot\|_{l_p^n(K)}$ defined as $\|x\|^p_{l_p^n(K)}=\sum_{i=1}^n\|x_i\|_K^p$ and denote its unit ball by $B_p^n(K)$ and $S_p^n(K):=\partial B_p^n(K)$. This body will concern us in Section \ref{Section High dim}. For future use, we calculate its volume 
\begin{equation} \label{volume of }
    |B_p^n(K)|= \frac{|K|^n\Gamma(1+\frac{d}{p})^n}{\Gamma(1+\frac{nd}{p})}.
\end{equation}
Indeed, using (twice) the known formula 
$
\int_{\mathbb{R}^N}\exp(-\|x\|_L^q)dx=|L|\Gamma\left(\frac{N}{q}+1\right)
$, we get
\[
|B_p^n(K)|\Gamma\left(\frac{nd}{p}+1\right)=\int_{\mathbb{R}^{nd}}e^{-\|x\|_{l_p^n(K)}^p}dx=\left( \int_{\mathbb{R}^d}e^{-\|x\|_K^p}dx   \right)^n=\left( |K|\Gamma\left(\frac{d}{p}+1\right)   \right)^n.
\]

\subsection{Subspaces of $L_p$ and intersection bodies} In a subsequent section, in order to deal with certain star bodies, we will use results about embeddings of finite dimensional quasi-normed spaces $(\mathbb{R}^d,\|\cdot\|)$ in $L_p$, for $p\in (0,2]$ or $p\in (-d,0)$. For background on this subject and much more we refer to the book \cite{koldobskybook} by Koldobsky where most of the material below is taken from. See also \cite{embext}.

\subsubsection{}When $p>0$, we write $L_p$ for the space $L_p([0,1])$ equipped with the usual quasi-norm denoted by $\|\cdot\|_{L_p}$. The quasi-normed space $(\mathbb{R}^d,\|\cdot\|)$ embeds isometrically in $L_p([0,1])$ if there exists a linear isometry $S: (\mathbb{R}^d,\|\cdot\|)\to (L_p([0,1]),\|\cdot\|_{L_p})$. Recall that for $u,x\in \mathbb{R}^d$, $u\otimes u(x):=\langle u,x\rangle u$. A Borel measure $\nu$ on the sphere $S^{d-1}$ is called isotropic if 
\begin{equation}
    I_d= \int_{S^{d-1}} u\otimes u \ \nu(du).
\end{equation}

For all $p>0$ a useful characterization by Lewis(\cite{lewis}) and by Schechtmann and Zvavitch(\cite{SchZv}), (see also \cite[Lemma 6.4]{koldobskybook} for a first representation of the norm by Lévy) tells that the space $(\mathbb{R}^d,\|\cdot\|)$ embeds isometrically to $L_p$ if and only if there exists an invertible linear map $T$ and an isotropic measure $\nu$ on $S^{d-1}$ such that $\|Tx\|^p=\int_{S^{d-1}}|\langle x,u \rangle|^pd\nu(u)$. In that case if $K:= \{x\in \mathbb{R}^d : \|x\|\leq 1 \}$, it is said that $T$ puts $K$ in Lewis' position.

Finally, for $p\in (0,2]$, the quasi-normed space $(\mathbb{R}^d,\|\cdot\|)$ embeds isometrically in $L_p$ if and only if the (continuous) function $h(x)=\exp(-\|x\|^p)$ is positive definite, see also \cite{lois}.

\subsubsection{}For negative values of $p$ the above definition of embedding does not make sense. In that case Koldobsky gave the following definition:

\begin{definition} \label{definition embed negative}
    Let $l\in(0,d)$. The quasi-normed space $(\mathbb{R}^d, \|\cdot\|)$ embeds in $L_{-l}$, if there exists a finite Borel
measure $\sigma_l$ on $S^{d-1}$, such that for every even $\psi \in \mathcal{S}(\mathbb{R}^d)$ it holds that 
\begin{equation} \label{eq for embeddings to -l}
    \int_{\mathbb{R}^d}\frac{1}{\|x\|^{l}}\psi(x)dx = \int_0^\infty \int_{S^{d-1}}t^{l-1}\Hat{\psi}(t\theta)d\sigma_l(\theta)dt.
\end{equation}
\end{definition}

According to \cite[Corollary 2.26]{koldobskybook}, this is equivalent to the fact that $\|\cdot\|^{-l}$ represents a positive definite distribution. The motivation for this general definition comes from the analogous property that the intersection bodies, which were mentioned in the introduction, have. 

In general, if $k\in \{1,...,d-1\}$ and $L\subset\mathbb{R}^d$ is a symmetric star body, then a symmetric star body $K$ is called the $k$-intersection body of $L$, if for any $k$ co-dimensional subspace $H$ of $\mathbb{R}^d$ it holds 
\[
|K\cap H^\perp|=|L\cap H|.
\]
A calculation shows that in the sense of distributions on has $\left(\|\cdot\|_K^{-k}\right)^\wedge(\theta) =c\|\theta\|_L^{k-d}, \: \theta \in S^{d-1}$, up to some constant $c=c_{d,k}$, extended as $(k-d)$- homogeneous function.
In particular, for every even $\psi \in \mathcal{S}(\mathbb{R}^d)$ one has
\[
\int_{\mathbb{R}^d}\|x\|_K^{-k}\psi(x)dx=\int_{\mathbb{R}^d} \left(\|\cdot\|_K^{-k}\right)^\wedge(x) \hat{\psi}(x)dx= \int_{S^{d-1}} \int_0^{+\infty} t^{k-1}\hat{\psi}(t\theta)dt\ c_{d,k}\|\theta\|^{k-d}_L d\theta,
\]
which explains the reason behind Definition \ref{definition embed negative}.

The class of examples of star bodies $K$ for which $(\mathbb{R}^d, \|\cdot\|_K)$ embeds to $L_{-l}$ (for each exponent $l$ and dimension $d$) is big. In particular, by \cite[Corollary 4.9 and Theorem 4.11]{koldobskybook} one has the following examples :\par
\begin{enumerate}
 \item For all $K$ that are $k$-intersection bodies, $\frac{1}{\|x\|_K^{k}}$ is positive definite, and thus $(\mathbb{R}^d, \|\cdot\|_K)$ embeds to $L_{-k}$.
 \item  Any finite dimensional subspace of $L_p$, for $p\in (0,2]$ embeds in $L_{-l}$ for every  $0< l < d$.\par
\item For \textit{each} star body $K$ in $\mathbb{R}^d$ and $l\in [d-3,d)$ the function $\frac{1}{\|x\|_K^{l}}$ is positive definite. 
\end{enumerate}

\subsection{Stable laws and gaussian mixtures} \label{subsection mixtures completely monnot etc}

A random variable $X$ is called a Gaussian mixture if it has the same distribution as the product of two independent random variables $Y$ and $Z$, with $Y$ being almost surely non-negative and $Z$ a (standard) Gaussian random variable. Recently Eskenazis, Nayar and Tkocz \cite{ENT} studied extensively these random variables in connection with geometric questions. A useful property is that a symmetric random variable $X$ with density $\phi$ is a Gaussian
mixture if and only if the function $x \mapsto \phi(\sqrt{x})$ is completely monotonic on $(0,+\infty)$. A real function $h\in C^\infty((0,+\infty))$ is called completely monotonic if it satisfies $(-1)^nh^{(n)}(t)\geq 0$, for all $n\in \mathbb{N}$ and $t > 0$. According to Bernstein's theorem a function $h$ is completely monotonic if and only if it is the Laplace transform of a non-negative Borel measure $\nu$ on $[0,+\infty)$, i.e.
\[
h(t) = \int_0^\infty e^{-tx} \nu(dx).
\]

Examples of Gaussian mixtures include the $p$-stable random variables and the random variables with density proportional to $e^{-|x|^p}$, for $p\in (0,2]$. For $p\in (0,2]$, by (symmetric) $p$-stable random variable we mean a random variable $X$ whose density $\phi$ has Fourier transform $\Hat{\phi}(x)=\exp(-c_p|x|^p)$, see \cite{univariatestabledistributions}.

For $p>2$, $p$-stable random variables do not exist. However, Oleszkiewicz in \cite{pseudostable} introduced a new class of random variables that he called pseudo-stable random variables. For $p > 2$ a (symmetric) random variable $X$ is called $p$-pseudo-stable if $X$ is not Gaussian and for any $a,b\in \mathbb{R}$ and $X_1, X_2$ independent $p$-pseudo-stable random variables copies of $X$, there exists $v(a, b)\in \mathbb{R}$ such that
\[
aX_1 + bX_2 =_d (|a|^p + |b|^p)^{\frac{1}{p}}X + v(a, b)Z,\]
where $Z$ is a standard Gaussian independent of $X$.

What will be crucial for us is that the Fourier transform a $p$-pseudo-stable random variable $X$ is
\[
\mathbb{E}e^{-2\pi itX}= \exp(-c_1|t|^p-c_2t^2),
\]
where $c_i>0$. Finally, Oleszkiewicz proved that these random variables exist precisely when $p$ lies in the set $\cup_{n=1}^\infty (4n,4n+2)$, while their second moment is always finite. See \cite[Theorem 1]{pseudostable} for details. More generally, Misiewicz and Mazurkiewicz defined, in \cite{firstpc_pseudostable}, for $r\in (0,2)$ (i.e. having a $r$-stable instead of the Gaussian in the above definition), the $(p,r)$-pseudo-stable random variables in the spirit of Oleszkiewicz and proved existence for a certain range of parameters. These are symmetric random variables that have Fourier transform of the form $\exp(-c_1|t|^p-c_2|t|^r)$. See also \cite{c_dpseudostable}.

The vectorial analogue of a Gaussian mixture was introduced in \cite{eskgav}:  a random vector $X$ in $\mathbb{R}^d$ is a Gaussian mixture if $X$ has the same distribution as $YZ$, where $Y$ is a $d\times d$ random matrix that is almost surely positive definite and $Z$ is a standard Gaussian random vector independent of $Y$. The density $\phi$ of a Gaussian mixture $X$ on $\mathbb{R}^d$ (for $d\geq 1$) is thus given by 
\begin{equation} \label{density of GM}
    \phi(x)=\mathbb{E}_Y \left[\frac{1}{\det(\sqrt{2\pi }Y)}\exp(-\frac{1}{2}|Y^{-1}x|^2)\right].
\end{equation}
 
Finally, the vector analogue for $p$-stable exists as well ($p\in (0,2)$). We refer to \cite{nolan:book2} and \cite{stablebook} for a complete description. We are only interested in symmetric $p$-stable random vectors $X$ on $\mathbb{R}^d$, which are characterized by the existence of a finite symmetric measure $\sigma_0$ on $S^{d-1}$ such that 
\[
\mathbb{E}e^{-2\pi i \langle t,X\rangle}= \exp\left(-\int_{S^{d-1}}|\langle t,\theta
 \rangle|^p\sigma_0(d\theta)  \right).
\]
The measure $\sigma_0$ is called spectral measure.

\section{Product measures and Fourier} \label{Section main}

\subsection{The section function} \label{section sections} 

In this section we consider a high-dimensional generalization of the question posed in the introduction about sections of product measures. 

We fix $d,n\in \mathbb{N}$. For $t\in \mathbb{R}^d$ and $y\in \mathbb{R}^n$, we denote by
\[
H_{y,t}:=\{x=(x_1,\ldots,x_n)\in \mathbb{R}^{nd} : y_1x_1+\cdots+y_nx_n=t\},
\]
the $(nd-d)$-dimensional affine block subspace associated to the pair $(y,t)$. When $t=0$ we will write $H_y$. Also for $y\in \mathbb{R}^n$ and $x\in \mathbb{R}^{nd}$ we denote $\langle y,x\rangle:=y_1x_1+\cdots+y_nx_n\in \mathbb{R}^d.$

Let now $\mu(dx)=\phi(x)dx$ be a probability measure on $\mathbb{R}^d$. We will always assume that $\phi$ is even, has integrable Fourier transform and $\hat{\phi}\geq 0$. We define the (block) section function $S_{\mu^n}:\mathbb{R}^n\setminus \{0\}\times\mathbb{R}^d \to [0,+\infty)$ of the $n$-fold product probability measure $\mu^n$ on $\mathbb{R}^{nd}$ as
\[
S_{\mu^n}(y,t):=\int_{H_{y,t}}\prod_{k=1}^n\phi(x_k)\ \mathcal{H}^{nd-d}(dx),
\]
and we will denote $S_{\mu^n}(y):=S_{\mu^n}(y,0)$. Inspired by Ball's representation of the sections of the cube (which corresponds to $d=1$ and $\mu(dt)=\mathbbm{1}_{[-1/2,1/2]}(t)dt$), we use Fourier inversion  to give a representation of the section function $S_{\mu^n}$. More precisely, for any $y\neq 0$, taking the Fourier transform  with respect to the variable $t\in \mathbb{R}^d$, we get
\[
\int_{\mathbb{R}^d} e^{-2\pi i\langle s,t\rangle}\int_{\langle x,y\rangle = t} \prod_{k=1}^n\phi(x_k)\ \mathcal{H}^{nd-d}(dx)\ dt=|y|^d\int_{\mathbb{R}^{nd}}   e^{-2\pi is\langle x,y\rangle}\prod_{k=1}^n\phi(x_k)\ dx=|y|^d\prod_{k=1}^n\hat{\phi}(y_ks),
\]
where we applied the coarea formula for the function $F_y:\mathbb{R}^{nd}\to \mathbb{R}^d$ defined as $F_y(x_1,\ldots,x_n)=y_1x_1+\cdots+y_nx_n$, and thus
\begin{equation}\label{fourierofsec}
\Hat{S}_{\mu^n}(y,s)= |y|^d\prod_{k=1}^n\hat{\phi}(y_ks) \ .
\end{equation}
Observe that from the assumptions on $\phi$, $\Hat{S}_{\mu^n}(y,s)$ is integrable in $s$, for any $y\in \mathbb{R}^n$.  Inverting the Fourier transform, yields
\[
S_{\mu^n}(y,t)=|y|^d\int_{\mathbb{R}^d} e^{2\pi i\langle s,t\rangle}\prod_{k=1}^n\hat{\phi}(y_ks)ds,
\]
and in particular
\[
S_{\mu^n}(y)=S_{\mu^n}(y,0)= |y|^d\int_{\mathbb{R}^d}\prod_{k=1}^n\hat{\phi}(y_ks)ds.
\]

In analogy with the classical question on sections of convex bodies, we study for which directions $\theta\in S^{n-1}$, $S_{\mu^n}(\theta)$ is extremal. Since $\theta=(\theta_1,\ldots,\theta_n)$
belongs to the unit sphere, setting $q_k=\theta_k^2$, we have $(q_1,\ldots,q_n)\in \Delta^{n-1}:= \{(a_1,\ldots,a_n)\in [0,1]^n\ |\ a_1+\cdots+a_n=1\} $. In particular,
\[
S_{\mu^n}(\sqrt{q_1},...,\sqrt{q_n})= \int_{\mathbb{R}^d}\prod_{k=1}^n\hat{\phi}(\sqrt{q_k}s)ds .
\]

Combining the above calculation with the elementary Lemma \ref{Schurlemma} gives:

\begin{proposition} \label{sectionlogsth}
    Let $\mu$ be a probability measure on $\mathbb{R}^d$ with density $\phi$. Assume that $\phi$ is even and also that its Fourier transform is positive and integrable. Consider the function 
\[
 G(q):=S_{\mu^n}(\sqrt{q}), \ \text{for } \ q\in \Delta^{n-1},
\]
 where $\sqrt{q}:=(\sqrt{q_1},\ldots,\sqrt{q}_n)$. Then:
    
1) If for each $t\in \mathbb{R}^d$ the function $ r\mapsto\Hat{\phi}(\sqrt{r}t)$ is log-convex on $[0,+\infty)$, then $G$ is log-convex.

2) If for each $t\in \mathbb{R}^d$ the function $ r\mapsto\Hat{\phi}(\sqrt{r}t)$ is log-concave on $[0,+\infty)$, then $G$ is Schur concave.

\end{proposition}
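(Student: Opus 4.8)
The plan is to reduce both statements to pointwise (in the integration variable $s$) properties of the integrand and then pass to the integral. For a fixed $s\in\mathbb{R}^d$ I set $g_s(r):=\hat\phi(\sqrt r\,s)$ for $r\geq 0$, so that the integrand in the definition of $G$ factorises as $\prod_{k=1}^n g_s(q_k)$. The hypotheses are then precisely the statement that, for every $s$ (taking $t=s$), the one-variable function $g_s$ is log-convex (case 1) or log-concave (case 2).

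For part 2 I would apply Lemma \ref{Schurlemma} with $f=g_s$: since $g_s$ is log-concave, the function $q\mapsto\prod_{k=1}^n g_s(q_k)$ is Schur concave on $[0,+\infty)^n$ for each fixed $s$. Schur concavity is preserved under integration against the positive measure $ds$: if $q\prec q'$, then $\prod_k g_s(q_k)\geq\prod_k g_s(q_k')$ pointwise in $s$, and integrating yields $G(q)\geq G(q')$. Hence $G$ is Schur concave. The only point to verify is that the positivity and integrability of $\hat\phi$ make $G$ finite, so that these are inequalities between finite quantities.

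For part 1, Lemma \ref{Schurlemma} would only give Schur convexity, which is strictly weaker than the asserted log-convexity, so I would argue directly. For fixed $s$, log-convexity of $g_s$ says that $\log g_s$ is convex in one variable, hence $q\mapsto\log\prod_k g_s(q_k)=\sum_{k=1}^n\log g_s(q_k)$ is a sum of convex functions of the individual coordinates and is therefore convex; equivalently $F_s(q):=\prod_k g_s(q_k)$ is log-convex in $q$. The key step is then that an integral of log-convex functions is log-convex, which follows from H\"older's inequality: for $\lambda\in[0,1]$ and $q,q'\in\Delta^{n-1}$,
\[
G(\lambda q+(1-\lambda)q')=\int_{\mathbb{R}^d}F_s(\lambda q+(1-\lambda)q')\,ds\leq\int_{\mathbb{R}^d}F_s(q)^\lambda F_s(q')^{1-\lambda}\,ds\leq G(q)^\lambda G(q')^{1-\lambda},
\]
where the first inequality is the log-convexity of each $F_s$ and the second is H\"older with conjugate exponents $1/\lambda$ and $1/(1-\lambda)$.

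I expect the main obstacle to be integrability bookkeeping rather than anything conceptual: one must ensure that $G(q)$ is finite and strictly positive on the relevant domain so that $\log G$ is well defined and the H\"older step makes sense, and that the pointwise log-convexity of $g_s>0$ transfers cleanly. This is exactly where the assumption that $\hat\phi$ is positive and integrable enters. I would also remark that part 1 is genuinely stronger than what Lemma \ref{Schurlemma} alone gives: since a log-convex function is convex and $G$ is symmetric in its arguments, log-convexity of $G$ automatically implies its Schur convexity by the criterion recalled in Subsection \ref{Schur background}.
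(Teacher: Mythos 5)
Your proposal is correct and follows essentially the same route as the paper: part 1 via pointwise log-convexity of the product integrand followed by H\"older's inequality with exponents $1/\lambda$ and $1/(1-\lambda)$, and part 2 via Lemma \ref{Schurlemma} applied to each factor and then integration in $s$. The paper's proof is the same argument, so no further comparison is needed.
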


\begin{proof}

The first assertion follows by Hölder. Indeed, suppose that $\Hat{\phi}(\sqrt{\cdot}t)$ is log-convex, then if $q,r \in \Delta^{n-1}$ and $\lambda\in (0,1)$, we have
    \begin{align*}
    S_{\mu^n}(\sqrt{\lambda q+(1-\lambda)r})=\int_{\mathbb{R}^d}\prod_{k=1}^n\hat{\phi}(\sqrt{\lambda q_k+(1-\lambda) r_k}t)dt \leq \int_{\mathbb{R}^d}\prod_{k=1}^n\hat{\phi}(\sqrt{q_k}t)^\lambda \hat{\phi}(\sqrt{r_k}t)^{1-\lambda} dt \\ \leq S_{\mu^n}(\sqrt{q})^{\lambda}S_{\mu^n}(\sqrt{r})^{1-\lambda},
    \end{align*}
    which is the log-convexity of $G$.

For the second point, since $\Hat{\phi}(\sqrt{\cdot}t)$ is log-concave, Lemma \ref{Schurlemma} gives that $\Phi(a)=\prod_{k=1}^n\hat{\phi}(\sqrt{a_k}t)$ is Schur concave, for every $t\in \mathbb{R}^d$. Let $q\prec r$ in $\Delta^{n-1}$ we have 

\[
S_{\mu^n}(\sqrt{q}) = \int_{\mathbb{R}^d}\prod_{k=1}^n\hat{\phi}(\sqrt{q_k}t)dt \geq \int_{\mathbb{R}^d}\prod_{k=1}^n\hat{\phi}(\sqrt{r_k}t)dt= S_{\mu^n}(\sqrt{r})
\]
    which is the desired property.
    
\end{proof}

In particular, as log-convexity implies convexity and thus Schur convexity as well (since the section function is invariant under permutations) the above proposition gives the following about the extremal (block) sections of such product measures, complementing for certain measures (and hyperplanes) \cite[Proposition 18]{bartheextremal}:

\begin{corollary}
    Let $\mu(dx)=\phi(x)dx$ be as in Proposition \ref{sectionlogsth}. If for each $t\in \mathbb{R}^d$ the function $[0,\infty)\ni r\mapsto\Hat{\phi}(\sqrt{r}t)$ is log-convex, then for each $\theta\in S^{n-1}$
    \[
    S_{\mu^n}\left(\frac{1}{\sqrt{n}},\ldots,\frac{1}{\sqrt{n}}\right)\leq  S_{\mu^n}(\theta)\leq  S_{\mu^n}\left(1,0,\ldots,0\right)
    \]
    and the converse in the log-concave case.
\end{corollary}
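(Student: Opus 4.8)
The plan is to deduce the corollary directly from Proposition \ref{sectionlogsth} together with the two extremal majorization facts recorded in Section \ref{Schur background}, so that essentially no new computation is needed. First I would reduce an arbitrary direction $\theta\in S^{n-1}$ to a point of the simplex $\Delta^{n-1}$. Setting $q_k=\theta_k^2$, we have $(q_1,\dots,q_n)\in\Delta^{n-1}$, and since $\phi$ is even its Fourier transform $\hat\phi$ is real and even; hence $\hat\phi(\theta_k s)=\hat\phi(\sqrt{q_k}\,s)$ and therefore $S_{\mu^n}(\theta)=S_{\mu^n}(\sqrt q)=G(q)$. Thus it suffices to locate the maximum and minimum of $G$ over the simplex. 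Recall from Section \ref{Schur background} that for any such $q$ one has the majorization chain $(\tfrac1n,\dots,\tfrac1n)\prec q\prec(1,0,\dots,0)$, so the corollary will follow by feeding this chain into the appropriate monotonicity of $G$.

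In the log-convex case, Proposition \ref{sectionlogsth} gives that $G$ is log-convex, hence convex. Since $G(q)=\int_{\mathbb{R}^d}\prod_{k=1}^n\hat\phi(\sqrt{q_k}t)\,dt$ is invariant under permutations of the coordinates of $q$, it is a symmetric convex function, and therefore Schur convex by the criterion recalled in Section \ref{Schur background}. Applying Schur convexity along the majorization chain yields $G(\tfrac1n,\dots,\tfrac1n)\le G(q)\le G(1,0,\dots,0)$, which is exactly the claimed inequality once we translate back via $q_k=\theta_k^2$ (and read off the endpoints $\theta=(\tfrac1{\sqrt n},\dots,\tfrac1{\sqrt n})$ and $\theta=(1,0,\dots,0)$).

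In the log-concave case, Proposition \ref{sectionlogsth} directly provides Schur concavity of $G$, so the very same majorization chain yields the reversed inequalities $G(\tfrac1n,\dots,\tfrac1n)\ge G(q)\ge G(1,0,\dots,0)$, giving the converse statement.

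Since every step is an immediate invocation of already-established facts, I do not expect a genuine obstacle. The only two points requiring a word of care are the reduction from $\theta$ to $q$, which relies on the evenness of $\hat\phi$ to absorb the signs of the $\theta_k$, and the passage from convexity plus symmetry to Schur convexity; both are routine given the preliminaries, and the heart of the argument has already been carried out in Proposition \ref{sectionlogsth}.
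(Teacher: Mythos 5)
Your proposal is correct and follows exactly the paper's route: the corollary is obtained from Proposition \ref{sectionlogsth} by noting that log-convexity plus permutation invariance of $G$ gives Schur convexity (and Schur concavity is given directly in the other case), and then applying the majorization chain $(\tfrac1n,\dots,\tfrac1n)\prec q\prec(1,0,\dots,0)$ after the substitution $q_k=\theta_k^2$. No discrepancies to report.
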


\par \vspace{1em}
When $d=1$, the log-convexity of $\Hat{\phi}(\sqrt{\cdot})$
is an assumption that has appeared in many places in the literature in connection with Schur monotonicity, starting from \cite{koldobsky98} where it was used
to find the extremal sections of $p$-balls for $p\in (0,2)$. Later, the condition appeared again in a paper by Barthe and Naor \cite{BN}, to find the extremal \textit{projections} of $p$-balls for $p\in (2,\infty)$, and more recently in \cite{ENT} as a property of Gaussian mixtures.

On the other hand the condition in the log-concave case seems quite strong, and in fact the first trivial example is the Gaussian measure. A more involved class of examples comes from the pseudo-stable random variables, which we introduced in Section \ref{subsection mixtures completely monnot etc}. In Section \ref{section examples} we will expand more on the examples of measures.

\subsection{The general argument}
We consider now the problem of finding the extremals of certain functionals, associated to product measures defined on $\mathbb{R}^d$. The following is the main result of the present note:

\begin{theorem}\label{centralthm} Let $q>0$. Let $\mu(dx)=\phi(x)dx$ be a Borel probability measure on $\mathbb{R}^d$, with $\phi$ even and $\hat{\phi}$ positive and  integrable. Let $h:\mathbb{R}^d\to \mathbb{R}$ be a continuous positive definite function. Let $n\in \mathbb{N}$, $X_1,\ldots,X_n$ be i.i.d. random vectors distributed according to $\mu$ and consider the function  $H: [0,+\infty)^n \to [0,+\infty]$ that is defined as
\[
H(a):=\mathbb{E}h\left(  a_1^{\frac{1}{q}}X_1+\cdots+ a_n^{\frac{1}{q}}X_n \right).
\]
Then:\par
1) If for each $t\in \mathbb{R}^d$ the function $[0,+\infty)\ni r\mapsto\Hat{\phi}(r^{\frac{1}{q}}t)$ is log-convex , then $H$
is log-convex.

Conversely, if for each positive definite function $h$ on $\mathbb{R}^d$, the associated functional $H$ is log-convex, then for each $t\in \mathbb{R}^d$ the function $[0,+\infty)\ni r\mapsto\Hat{\phi}(r^{\frac{1}{q}}t)$ is log-convex.

2) If for each $t\in \mathbb{R}^d$ the function $[0,+\infty)\ni r\mapsto\Hat{\phi}(r^{\frac{1}{q}}t)$ is log-concave, then $H$ is Schur concave.

Conversely, if for each positive definite function $h$ on $\mathbb{R}^d$, the associated functional $H$ is Schur concave and in addition  $[0,\infty)\ni r\mapsto\Hat{\phi}(r^{\frac{1}{q}}t)$ is continuous then for each $t\in \mathbb{R}^d$ the function $[0,\infty)\ni r\mapsto\Hat{\phi}(r^{\frac{1}{q}}t)$ is log-concave.
\end{theorem}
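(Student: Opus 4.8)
The plan is to reduce everything to the Bochner representation of $h$. Since $h$ is continuous and positive definite, Bochner's theorem supplies a finite positive Borel measure $\nu$ on $\mathbb{R}^d$ with $h(x)=\int_{\mathbb{R}^d}e^{-2\pi i\langle x,y\rangle}\nu(dy)$. Substituting this into the definition of $H$, using the independence of the $X_k$ and interchanging expectation with the $\nu$-integral (legitimate since $\nu$ is finite and the integrand has modulus $1$), I would obtain
\[
H(a)=\int_{\mathbb{R}^d}\prod_{k=1}^n\mathbb{E}\!\left[e^{-2\pi i a_k^{1/q}\langle X_k,y\rangle}\right]\nu(dy)=\int_{\mathbb{R}^d}\prod_{k=1}^n\hat{\phi}(a_k^{1/q}y)\,\nu(dy),
\]
where I use that $\hat{\phi}$ is the (even) characteristic function of $\mu$. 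This recasts $H$ in exactly the shape of the section function treated in Proposition \ref{sectionlogsth}, only with integration against $\nu$ in place of Lebesgue measure.

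With this representation in hand the two forward implications are immediate adaptations of Proposition \ref{sectionlogsth}. For part (1), if each $r\mapsto\hat{\phi}(r^{1/q}y)$ is log-convex, then for every fixed $y$ the integrand $a\mapsto\prod_k\hat{\phi}(a_k^{1/q}y)$ is log-convex on $[0,+\infty)^n$ (its logarithm is a sum of convex functions of the separate coordinates), and integration against the positive measure $\nu$ preserves log-convexity by Hölder's inequality, exactly as in Proposition \ref{sectionlogsth}. For part (2), if each $r\mapsto\hat{\phi}(r^{1/q}y)$ is log-concave, Lemma \ref{Schurlemma} shows that the integrand is Schur concave for each $y$; since $a\prec b$ then yields a pointwise inequality between the integrands, integrating against $\nu$ preserves it and gives Schur concavity of $H$.

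The main work lies in the converse directions, and the key device is to choose $h$ so as to concentrate $\nu$ at a single direction. I would take $\nu=\tfrac12(\delta_{t_0}+\delta_{-t_0})$, whose Fourier transform $h(x)=\cos(2\pi\langle x,t_0\rangle)$ is a genuine continuous, real-valued, positive definite function and hence admissible in the hypotheses. For the converse of (1), take $n=1$: by evenness of $\hat{\phi}$ the representation collapses to $H(a)=\hat{\phi}(a^{1/q}t_0)$, so log-convexity of $H$ is precisely log-convexity of $r\mapsto\hat{\phi}(r^{1/q}t_0)$, and $t_0$ is arbitrary. For the converse of (2), take $n=2$: the representation gives $H(a_1,a_2)=\hat{\phi}(a_1^{1/q}t_0)\hat{\phi}(a_2^{1/q}t_0)=g(a_1)g(a_2)$ with $g(r)=\hat{\phi}(r^{1/q}t_0)$, and the assumed Schur concavity of $H$, together with the continuity hypothesis, lets me invoke the converse half of Lemma \ref{Schurlemma} to conclude that $g$ is log-concave.

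I expect the remaining points to be bookkeeping rather than conceptual obstacles: verifying that $h=\cos(2\pi\langle\cdot,t_0\rangle)$ is indeed continuous, real and positive definite so that the converse hypotheses genuinely apply to it, and checking the Fubini interchange, which is harmless because $\nu$ is finite and the complex exponential is bounded by $1$. The continuity assumption in the converse of (2) is exactly what the converse of Lemma \ref{Schurlemma} requires, so it enters in precisely the anticipated place; the only genuinely clever step is the choice of the localizing measure $\nu$, which is what turns the global monotonicity hypotheses into a statement about a single function $r\mapsto\hat{\phi}(r^{1/q}t)$.
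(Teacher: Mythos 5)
Your proposal is correct and follows the same overall strategy as the paper: write $h=\hat{\nu}$ by Bochner, obtain the representation $H(a)=\int_{\mathbb{R}^d}\prod_{k=1}^n\hat{\phi}(a_k^{1/q}y)\,\nu(dy)$, then apply H\"older for log-convexity and Lemma \ref{Schurlemma} for Schur concavity, and prove the converses by localizing $\nu$ near a single point $\pm t_0$. You deviate in two places, both of which simplify the argument. First, you derive the representation directly by Fubini (finite $\nu$, unimodular integrand), whereas the paper routes through the coarea formula and the Fourier inversion of the section function $S_{\mu^n}$; your derivation needs less regularity and less machinery. Second, for the converses the paper approximates the Dirac pair by normalized indicators of small balls and invokes Lebesgue differentiation as $\varepsilon\to0^+$, while you take $\nu=\tfrac12(\delta_{t_0}+\delta_{-t_0})$ exactly, i.e. $h(x)=\cos(2\pi\langle x,t_0\rangle)$, which is an admissible continuous positive definite function and collapses $H$ to $\prod_k\hat{\phi}(a_k^{1/q}t_0)$ with no limiting argument. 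The only bookkeeping point: $n$ is fixed in the theorem, so rather than ``taking $n=1$'' or ``$n=2$'' you should restrict $H$ to the coordinate line $(r,0,\ldots,0)$ (for the converse of (1)) and to the slice $(a_1,a_2,0,\ldots,0)$ (for the converse of (2)); since padding with zeros preserves both line segments and the majorization order, and $\hat{\phi}(0)=1$, your argument goes through verbatim. This is exactly the restriction the paper itself performs.
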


\begin{proof}
    Denote $a^{\frac{1}{q}}:=( a_1^{\frac{1}{q}},\ldots,a_n^{\frac{1}{q}})$. We re-write the function $H$ as    
\[
H(a)=\mathbb{E}h\left(  a_1^{\frac{1}{q}}X_1+\cdots+ a_n^{\frac{1}{q}}X_n \right)= \int_{\mathbb{R}^{nd}} h\left(\langle a^{\frac{1}{q}},x \rangle\right)\prod_{i=1}^n \phi(x_i) \ dx, \] and use the co-area formula with $F(x_1,\ldots,x_n)=a_1^{\frac{1}{q}}x_1+\cdots+a_n^{\frac{1}{q}}x_n$ to get
\[H(a)=\frac{1}{|a^{\frac{1}{q}}|^d}\int_{\mathbb{R}^d} h(t) \int_{\langle a^{\frac{1}{q}},x \rangle=t} \prod_{i=1}^n \phi(x_i) \ \mathcal{H}^{nd-d}(dx)\ dt.
\]
Since $h$ is positive definite, Bochner's theorem provides a finite positive Borel measure $\nu$ on $\mathbb{R}^d$ such that $h=\hat{\nu}$ and so 
\[
H(a)= \frac{1}{|a^{\frac{1}{q}}|^d}\int_{\mathbb{R}^d} \int_{\mathbb{R}^d} e^{-2\pi i\langle r,t\rangle}\nu(dr) \int_{\langle a^{\frac{1}{q}},x \rangle=t} \prod_{i=1}^n \phi(x_i)\ \mathcal{H}^{nd-d}(dx)\ dt.
\]

From the relation (\ref{fourierofsec}) we get
\[
H(a)=\int_{\mathbb{R}^d} \prod_{i=1}^n\hat{\phi}(a_i^{\frac{1}{q}}s)d\nu(s).
\]
Suppose now that $[0,+\infty)\ni r\mapsto \hat{\phi}(r^{\frac{1}{q}}s)$ is log-convex for each $s\in \mathbb{R}^{d}$. Let $a,b\in [0,+\infty)^n$ and $\lambda\in (0,1)$, then using Hölder we get
\begin{align*}
H(\lambda a+(1-\lambda)b)=\int_{\mathbb{R}^d} \prod_{i=1}^n\hat{\phi}((\lambda a_i+(1-\lambda)b_i)^{\frac{1}{q}}s)d\nu(s) \leq \int_{\mathbb{R}^d} \prod_{i=1}^n\hat{\phi}(a_i^{\frac{1}{q}}s)^\lambda\prod_{i=1}^n\hat{\phi}(b_i^{\frac{1}{q}}s)^{1-\lambda} d\nu(s)\\ \leq \left(\int_{\mathbb{R}^d} \prod_{i=1}^n\hat{\phi}(a_i^{\frac{1}{q}}s)d\nu(s)\right)^{\lambda}\left(\int_{\mathbb{R}^d} \prod_{i=1}^n\hat{\phi}(a_i^{\frac{1}{q}}s)d\nu(s)\right)^{1-\lambda}=H(a)^\lambda H(b)^{1-\lambda}
\end{align*}
and so $H$ is log-convex on $[0,+\infty)^n$.

Conversely, suppose that for each positive definite function $h$, the associated functional $H$ is log-convex. Then, let $x_0\in \mathbb{R}^d$ and for $\varepsilon > 0$ consider the Borel (probability) measure $\nu_\varepsilon$ defined via

\[\nu_\varepsilon(dx)=\frac{1}{2|B(x_0,\varepsilon)|}\mathbbm{1}_{B(x_0,\varepsilon)\cup B(-x_0,\varepsilon)}(x)dx\]
and let $h=\hat{\nu}_\varepsilon$.

Let  $\lambda\in (0,1)$ and take $a=(a,0,\ldots,0)$ and $b=(b,0,\ldots,0)$, then by the log-convexity of $H$ and the representation $H(a_1,\ldots,a_n)=\int \prod_{i=1}^n\hat{\phi}(a_i^{\frac{1}{q}}s)d\nu_\varepsilon(s)$ we see that
\[
\frac{1}{|B(x_0,\varepsilon)|}\int_{B(x_0,\varepsilon)} \hat{\phi}((\lambda a+(1-\lambda)b)^{\frac{1}{q}}s)ds \leq \left(\frac{1}{|B(x_0,\varepsilon)|}\int_{B(x_0,\varepsilon)} \hat{\phi}(a^{\frac{1}{q}}s)ds\right)^{\lambda}\left(\frac{1}{|B(x_0,\varepsilon)|}\int_{B(x_0,\varepsilon)} \hat{\phi}(b^{\frac{1}{q}}s)ds\right)^{1-\lambda}
\]
where we used that the density $\phi$, and thus its Fourier transform is even.

Now by Lebesque's differentiation theorem, letting $\varepsilon\to 0^+$, we get
\[
\hat{\phi}((\lambda a+(1-\lambda)b)^{\frac{1}{q}}x_0)\leq \hat{\phi}(a^{\frac{1}{q}}x_0)^{\lambda}\hat{\phi}(b^{\frac{1}{q}}x_0)^{1-\lambda}.
\]

Since this holds for any $a,b>0$ and $x_0\in \mathbb{R}^d$ we get that the function $(0,+\infty)\ni r\mapsto \hat{\phi}(r^{\frac{1}{q}}s)$ is log-convex for each $s\in \mathbb{R}^d$.

In the log-concave case, we use again Lemma \ref{Schurlemma} as in the Proposition \ref{sectionlogsth} for sections, to see that for each $s\in \mathbb{R}^d$ the function 
\[
(0,+\infty)^n\ni (a_1,\ldots,a_n) \mapsto \prod_{m=1}^n\hat{\phi}(a_m^{\frac{1}{q}}s)
\]
 is Schur concave by the log-concavity of $\hat{\phi}((\cdot)^{\frac{1}{q}}s)$. Using the representation \[H(a_1,\ldots,a_n)=\int \prod_{i=1}^n\hat{\phi}(a_i^{\frac{1}{q}}s)d\nu(s)\] we can infer that $H$ is Schur concave.
 
The proof of the converse direction is identical to the log-convex case. We apply the condition for $h_\varepsilon$ as above and if $a\prec b$ we have
\[
H_\varepsilon(a)\geq H_\varepsilon(b)
\]
now letting $\varepsilon\to 0^+$ we get that for each $s\in \mathbb{R}^d$ the function $F(a_1,\ldots,a_n)=\prod_{m=1}^n\hat{\phi}(a_m^{\frac{1}{q}}s)$ is Schur concave and we conclude with Lemma \ref{Schurlemma}.

\end{proof}

When the random vectors are distributed uniformly on the sphere and $q=2$, in \cite{culverhouse} the authors proved that the Schur monotonicity of $H$ is equivalent to the bisubharmonicity of $h$, with the notation as above. For $d=1$, other results close to that spirit have appeared also in \cite{Pinelis},\cite{Figieletc} and \cite{ENTSharpcomparison}.

\begin{remark} The conclusion of Theorem \ref{centralthm} in the log-convex scenario is stronger than the Schur convexity of $H$, since the functional is symmetric under permutation in its arguments log-convexity implies convexity. 
In analogy to the problems posed in the introduction, the Schur monotonicity of some function defined as in Theorem \ref{centralthm} gives the extremals of the functional while $\theta\in S_q^{n-1}:= \{x\in \mathbb{R}^n : \|x\|_q=1\}$.

Note that if a random vector $X$ on $\mathbb{R}^d$ is distributed according to a measure $\mu$, satisfying either of the above assumptions, for each $y\in \mathbb{R}^d$, the r.v. $\langle X,y\rangle$ also satisfies the corresponding assumption.

Notice that there is no assumption on the sign of $h$, while the fact that we considered probability measures is just a normalization and below we may forget this if we deal with finite measures. Moreover, the family of functions $N_q(t)=t^{\frac{1}{q}}$ that we considered could be replaced by any function $N$, requiring a condition on $\hat{\phi}(N(\cdot)t)$. In lack of any interesting examples we avoid such generality.

Finally, it is clear that the proof is identical when the function $h$ is in $L^2(\mathbb{R}^d)$, as Theorem \ref{mainthmintro} was stated in the introduction.
\end{remark}

The main theorem can be useful in moment comparison:

\begin{corollary} \label{momentcorollary}
    Let $q>0$ and $X_1,\ldots,X_n$ be i.i.d. random variables on $\mathbb{R}$ distributed according to $\mu(dx)=\phi(x)dx$, where $\phi$ is even with integrable and positive Fourier transform $\hat{\phi}$ satisfying $\Hat{\phi}(t^\frac{1}{q})$ is log-convex (resp. log-concave) on $(0,+\infty)$.
    
Then, for each $\lambda>0$ and $p\in (0,2]$, the function 
\begin{equation} \label{boosted 1 dim main}
(0,+\infty)^n \ni a \mapsto \mathbb{E} e^{-\lambda|\sqrt[q]{a_1}X_1+\cdots+ \sqrt[q]{a_1}X_n|^p}
\end{equation}
is log-convex (resp. Schur concave).
    
    In particular, assuming that   $\mathbb{E} |X|^p<+\infty$, for some $p\in (0,2]$ the function 
\[
(0,+\infty)^n \ni a \mapsto \mathbb{E} |a_1^{\frac{1}{q}}X_1+\cdots+ a_n^{\frac{1}{q}}X_n|^p
\]
is concave (resp. Schur convex), while if $p\in (0,1)$ and $\mathbb{E} |X|^{-p}<+\infty$ then the function
\[
(0,+\infty)^n \ni a \mapsto \mathbb{E} |a_1^{\frac{1}{q}}X_1+\cdots+ a_n^{\frac{1}{q}}X_n|^{-p}
\]
is log-convex (resp. Schur concave).
\end{corollary}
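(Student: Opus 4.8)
The plan is to deduce all three assertions from the central result Theorem \ref{centralthm} by feeding it the appropriate positive definite function $h$: directly in the first case, and after a subordination (integral representation) that builds the required $h$ out of the Gaussian family in the other two. Throughout write $S(a)=a_1^{1/q}X_1+\cdots+a_n^{1/q}X_n$ and $g_\lambda(a)=\mathbb{E}e^{-\lambda|S(a)|^2}$. For the boosted statement \eqref{boosted 1 dim main} I would apply Theorem \ref{centralthm} with $h(t)=e^{-\lambda|t|^p}$: for $p\in(0,2]$ this is exactly the characteristic function (Fourier transform of a probability density) of a rescaled symmetric $p$-stable law recalled in Subsection \ref{subsection mixtures completely monnot etc}, hence a continuous positive definite function by Bochner's theorem. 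Since $H(a)=\mathbb{E}h(S(a))=\mathbb{E}e^{-\lambda|S(a)|^p}$, Theorem \ref{centralthm} yields log-convexity under the log-convex hypothesis on $r\mapsto\hat{\phi}(r^{1/q})$ and Schur concavity under the log-concave one. Specializing to $p=2$ gives the building block: $g_\lambda$ is log-convex (hence convex) in the log-convex case and Schur concave in the log-concave case.

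For the $p$-th moment with $p\in(0,2)$ the function $|t|^p$ is not positive definite, so I would instead use the subordination identity
\[
|t|^p=c_p\int_0^\infty\big(1-e^{-\lambda t^2}\big)\lambda^{-1-\frac{p}{2}}\,d\lambda,\qquad c_p=\frac{p/2}{\Gamma(1-\frac{p}{2})}>0,
\]
which comes from $u^\alpha=\frac{\alpha}{\Gamma(1-\alpha)}\int_0^\infty(1-e^{-u\lambda})\lambda^{-1-\alpha}\,d\lambda$ with $u=t^2$ and $\alpha=p/2\in(0,1)$. Since the integrand is nonnegative, Tonelli's theorem gives
\[
\mathbb{E}|S(a)|^p=c_p\int_0^\infty\big(1-g_\lambda(a)\big)\lambda^{-1-\frac{p}{2}}\,d\lambda,
\]
with finiteness automatic because the inner $\lambda$-integral reproduces $c_p^{-1}|S(a)|^p$ and $\mathbb{E}|S(a)|^p<\infty$ follows from $\mathbb{E}|X|^p<\infty$. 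In the log-convex case each $g_\lambda$ is convex, so $1-g_\lambda$ is concave and a positive superposition of concave functions is concave; in the log-concave case each $g_\lambda$ is Schur concave, so $1-g_\lambda$ is Schur convex and Schur convexity is preserved under integration against the nonnegative weight $\lambda^{-1-p/2}\,d\lambda$. This gives concavity (resp. Schur convexity) for $p\in(0,2)$; the endpoint $p=2$ I would obtain by letting $p\uparrow2$, using that pointwise limits preserve both properties together with dominated convergence via $|S|^p\le 1+|S|^2$.

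For the negative moment, $p\in(0,1)$, I would use the Gamma identity
\[
|t|^{-p}=\frac{1}{\Gamma(\frac{p}{2})}\int_0^\infty e^{-\lambda t^2}\lambda^{\frac{p}{2}-1}\,d\lambda,
\]
which, again by Tonelli, yields $\mathbb{E}|S(a)|^{-p}=\frac{1}{\Gamma(p/2)}\int_0^\infty g_\lambda(a)\,\lambda^{\frac{p}{2}-1}\,d\lambda$, finite because $S(a)$ has a bounded density ($\hat{\phi}$ being integrable) and $|t|^{-p}$ is locally integrable for $p<1$. Here no sign flip occurs: in the log-convex case each $g_\lambda$ is log-convex and a positive superposition of log-convex functions is log-convex by Hölder, exactly as in the proof of Theorem \ref{centralthm}; in the log-concave case each $g_\lambda$ is Schur concave and Schur concavity survives the integration. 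This gives log-convexity (resp. Schur concavity) of $\mathbb{E}|S(a)|^{-p}$.

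The main obstacle is purely technical: justifying the interchange of $\mathbb{E}$ with the $\lambda$-integral and the finiteness of the resulting quantities. All subordination integrands are nonnegative, so Tonelli applies unconditionally and simultaneously delivers the interchange and reduces finiteness to the stated moment hypotheses. The only genuinely separate point is the boundary exponent $p=2$ in the concavity statement, which the stable representation cannot reach directly (one needs $e^{-\lambda|t|^{p'}}$ with $p<p'\le2$ positive definite, forcing $p<2$), and which I would therefore handle by the limiting argument above.
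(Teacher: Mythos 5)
Your proposal is correct, and its first assertion is proved exactly as in the paper: feed $h_\lambda(t)=e^{-\lambda|t|^p}$, positive definite for $p\in(0,2]$, into Theorem \ref{centralthm}. For the two moment statements you diverge from the paper's mechanism, though both arguments live in the same circle of ideas (reduce to the Laplace-type functionals from the first part, then apply a positivity-preserving operation in $\lambda$). The paper obtains concavity of $\mathbb{E}|S(a)|^p$ by writing the convexity inequality for $\lambda\mapsto\mathbb{E}e^{-\lambda|S(a)|^p}$, noting both sides equal $1$ at $\lambda=0$, and differentiating at $\lambda=0^+$; you instead use the subordination formula $|t|^p=c_p\int_0^\infty(1-e^{-\lambda t^2})\lambda^{-1-p/2}\,d\lambda$ and superpose the concave functions $1-g_\lambda$. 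Your route has the aesthetic advantage of only invoking the Gaussian kernel $e^{-\lambda t^2}$ (the $p'=2$ instance of the first part), but pays for it with the extra limiting argument at the endpoint $p=2$, which the paper's derivative trick handles directly. For the negative moments the two proofs are nearly the same move with different kernels: the paper integrates the log-convexity inequality for $\mathbb{E}e^{-\lambda|S|^p}$ over $\lambda\in(0,\infty)$ with unit weight (using $\int_0^\infty e^{-\lambda u}d\lambda=u^{-1}$ at $u=|S|^p$), while you integrate $g_\lambda$ against the Gamma weight $\lambda^{p/2-1}$; in both cases H\"older preserves log-convexity under the positive superposition, and integration preserves Schur concavity. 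All your interchanges are justified by Tonelli as you say, and your observation that integrability of $\hat\phi$ already forces a bounded density for $S(a)$, hence finiteness of the negative moments for $p<1$, is a correct (and slightly stronger) substitute for the paper's explicit hypothesis $\mathbb{E}|X|^{-p}<\infty$.
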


\begin{proof}

Recall first that for each $\lambda>0$ and $p\in (0,2]$ the function $h_\lambda(t)=e^{-\lambda|t|^p}$ is positive definite. 

Suppose that $\Hat{\phi}((\cdot)^\frac{1}{q})$ is log-convex.
Applying Theorem \ref{centralthm} for each $h_\lambda$ for $\lambda >0$, and $\mu$, we get that the function
\[
(0,+\infty)^n \ni a \mapsto \mathbb{E} \exp\left(-\lambda|a_1^{\frac{1}{q}}X_1+\cdots+ a_n^{\frac{1}{q}}X_n|^p\right) 
\]
is log-convex. 

Assume now that the measure satisfies  $\int_{\mathbb{R}}|x|^p\mu(dx)<\infty$.  Using convexity one gets, for $a,b \in (0,+\infty)^n$ and $t\in (0,1)$ that
\[
\mathbb{E} \exp\left(-\lambda|(ta_1+(1-t)b_1)^{\frac{1}{q}}X_1+\cdots+ (ta_n+(1-t)b_n)^{\frac{1}{q}}X_n|^p\right)\]\[ \leq t\mathbb{E} \exp\left(-\lambda|a_1^{\frac{1}{q}}X_1+\cdots+ a_n^{\frac{1}{q}}X_n|^p\right)+(1-t)\mathbb{E} \exp\left(-\lambda|b_1^{\frac{1}{q}}X_1+\cdots+ b_n^{\frac{1}{q}}X_n|^p\right)
\]
Since the two sides agree at $\lambda=0$ taking the derivative we get 
\[
\mathbb{E} |(ta_1+(1-t)b_1)^{\frac{1}{q}}X_1+\cdots+ (ta_n+(1-t)b_n)^{\frac{1}{q}}X_n|^p\]\[ \geq t\mathbb{E} |a_1^{\frac{1}{q}}X_1+\cdots+ a_n^{\frac{1}{q}}X_n|^p+(1-t)\mathbb{E} |b_1^{\frac{1}{q}}X_1+\cdots+ b_n^{\frac{1}{q}}X_n|^p
\]
and so the function 
\[
(0,+\infty)^n \ni a \mapsto \mathbb{E} |a_1^{\frac{1}{q}}X_1+\cdots+ a_n^{\frac{1}{q}}X_n|^p
\]
is concave, giving the second claim.

For the last part the argument is similar but now integrating with respect to the parameter $\lambda$. Indeed, let $a,b \in (0,+\infty)^n$ and $t\in (0,1)$, by log-convexity and then Hölder we have

\[
\int_0^\infty \mathbb{E} \exp\left(-\lambda|(ta_1+(1-t)b_1)^{\frac{1}{q}}X_1+\cdots+ (ta_n+(1-t)b_n)^{\frac{1}{q}}X_n|^p\right) d\lambda\]\[ \leq \int_0^\infty\left[\mathbb{E} \exp\left(-\lambda|a_1^{\frac{1}{q}}X_1+\cdots+ a_n^{\frac{1}{q}}X_n|^p\right)\right]^t\left[\mathbb{E} \exp\left(-\lambda|b_1^{\frac{1}{q}}X_1+\cdots+ b_n^{\frac{1}{q}}X_n|^p\right)\right]^{1-t}d\lambda
\]
\[
\leq \left[\int_0^\infty\mathbb{E} \exp\left(-\lambda|a_1^{\frac{1}{q}}X_1+\cdots+ a_n^{\frac{1}{q}}X_n|^p\right)d\lambda\right]^t\left[\int_0^\infty\mathbb{E} \exp\left(-\lambda|b_1^{\frac{1}{q}}X_1+\cdots+ b_n^{\frac{1}{q}}X_n|^p\right)d\lambda\right]^{1-t}.
\]
 This is now equivalent to 
\[
\mathbb{E} |(ta_1+(1-t)b_1)^{\frac{1}{q}}X_1+\cdots+ (ta_n+(1-t)b_n)^{\frac{1}{q}}X_n|^{-p} 
\]
\[
\leq \left[\mathbb{E} |a_1^{\frac{1}{q}}X_1+\cdots+ a_n^{\frac{1}{q}}X_n|^{-p}\right]^t\left[\mathbb{E}|b_1^{\frac{1}{q}}X_1+\cdots+ b_n^{\frac{1}{q}}X_n|^{-p}\right]^{1-t}
\]
which is the last claim. For the log-concave case one proceeds similarly using the Schur concavity instead.
\end{proof}

Considering the variation of the parameter $p$ as $p\to 0$, differentiation can also give the Schur monotonicity of the following function (provided the integrals are convergent)
\[
(0,+\infty)^n \ni a \mapsto \mathbb{E} \log\Big|a_1^{\frac{1}{q}}X_1+\cdots+ a_n^{\frac{1}{q}}X_n\Big|.
\]
The details are omitted.

While the moment comparison is indeed the usual interest, Theorem \ref{centralthm} contains many other examples. An unusual case at first sight is for example $h(x)=\cos x$ or even $\sum_i \cos(\alpha_ix)$, and gives information about

\[
H(a_1,\ldots,a_n)=\mathbb{E}\cos(\sqrt[q]{a_1}X_1+\cdots+\sqrt[q]{a_n}X_n).
\]

\subsection{Examples} \label{section examples}

 $\bullet$ \textbf{Case $q=2$}. This is the case that is the most interesting so far since it corresponds to the study of extrema of functionals on the Euclidean sphere. 

In one dimension, the hypothesis of the main theorem boils down to $\hat{\phi}(\sqrt{\cdot})$ being log-concave or log-convex on $(0,+\infty)$. A first example is given by the Gaussian measure for which this function is log-linear. The log-concavity hypothesis is satisfied by the $p$-pseudo-stable random variables, for each $p\in \cup_{n=1}^\infty (4n,4n+2)$, since their Fourier transform equals $\hat{\phi}(t)=\exp(-c_1|t|^p-c_2t^2)$, and thus $log(\hat{\phi}(\sqrt{t}))=-c_1t^{\frac{p}{2}}-c_2t$.

For the log-convexity hypothesis, examples include the Gaussian mixtures, that in fact satisfy the stronger that $\Hat{\phi}(\sqrt{\cdot})$ is completely monotonic. An example of density that does not correspond to a Gaussian mixture was pointed out to us by Piotr Nayar and is given by $\phi(t)=2\left(1-|t|\right)_+^3$.

In higher dimensions, the vector Gaussian mixtures (see Section \ref{subsection mixtures completely monnot etc}), still satisfy the log-convexity hypothesis, which amount to $\hat{\phi}(\sqrt{\cdot}\ t):[0,+\infty)\to (0,+\infty)$ being log-convex for each $t\in \mathbb{R}^d$. Indeed, if we denote the density of the random vector $X$ by $\phi$, and $X=_d YZ$, we get
     \begin{equation} \label{fourier of GM}
     \Hat{\phi}(x)= \mathbb{E}_Y\left[e^{-2\pi^2|Y^{t}x|^2}\right]
     \end{equation}
     Setting $x=\sqrt{r}t$ we get $\Hat{\phi}(\sqrt{r}\ t)=\mathbb{E}_Y\left[\exp\left(-2r\pi^2|Y^{t}t|^2\right)\right]$ and then by Hölder the claim (see also Lemma \ref{lemmanorm}).

One can compare the $p$-moments between sums of independent Gaussian mixtures even for $p>2$ directly due to their specific structure. This was the content of \cite[Theorem 3]{ENT} in one dimension and can be directly extended to higher dimensions. We include the short proof for completeness.

\begin{lemma} \label{lemmaENT}
    Suppose that $X_1,\ldots,X_n$ are $n$ independent Gaussian mixtures vectors. Then the function  (defined on $[0,+\infty)^n$)
    \[
    \mathbb{E}\left|\sqrt{a_1}X_1+\cdots+\sqrt{a_n}X_n\right|^p
    \]
    is Schur convex for $p\in (-d,0)\cup[2,+\infty)$ and Schur concave for $p\in (0,2]$, assuming that $\mathbb{E}|X_i|^p <	 +\infty $.
\end{lemma}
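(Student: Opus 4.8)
The plan is to exploit the Gaussian mixture structure by conditioning on the mixing matrices, which turns the multivariate moment into the expectation of a power of a \emph{linear} form in $a$; Schur monotonicity then falls out from the convexity or concavity of $t\mapsto t^{p/2}$ together with symmetry. Concretely, I would write each $X_i =_d Y_i Z_i$ as in \eqref{density of GM}, where the $Y_i$ are i.i.d.\ a.s.\ positive definite random matrices and the $Z_i$ are independent standard Gaussian vectors, all independent. Conditionally on $Y=(Y_1,\dots,Y_n)$ the sum $S:=\sqrt{a_1}X_1+\cdots+\sqrt{a_n}X_n$ is a sum of independent centered Gaussian vectors, hence Gaussian with covariance $A:=\sum_{i}a_iY_iY_i^{t}$; thus $S\mid Y =_d A^{1/2}Z$ for a single $Z\sim N(0,I_d)$. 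Taking the conditional expectation and then unconditioning,
\[
\mathbb{E}\,|S|^{p}=\mathbb{E}_{Y}\,\mathbb{E}_{Z}\,\bigl(Z^{t}AZ\bigr)^{p/2}
=\mathbb{E}_{Y,Z}\Bigl(\textstyle\sum_{i=1}^{n}a_i\,w_i\Bigr)^{p/2},
\qquad w_i:=|Y_i^{t}Z|^{2},
\]
where I used $Z^{t}AZ=\sum_i a_i\,Z^{t}Y_iY_i^{t}Z=\sum_i a_i|Y_i^{t}Z|^2$. The key structural point is that, since the $Y_i$ are exchangeable and $Z$ is independent of them, the nonnegative random vector $w=(w_1,\dots,w_n)$ is exchangeable.

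It then remains to read off the monotonicity of $a\mapsto \mathbb{E}(\langle a,w\rangle)^{p/2}$. For each fixed realization of $w$ the map $a\mapsto \sum_i a_i w_i$ is linear with nonnegative coefficients, so $a\mapsto(\sum_i a_iw_i)^{p/2}$ is convex in $a$ precisely when $t\mapsto t^{p/2}$ is convex on $(0,\infty)$, i.e.\ when $\tfrac{p}{2}(\tfrac{p}{2}-1)\ge0$; within the admissible range $p>-d$ this holds for $p\in(-d,0)\cup[2,\infty)$, while it is concave exactly for $p\in(0,2]$. Averaging over $w$ preserves convexity (resp.\ concavity), and exchangeability of $w$ makes the resulting function symmetric in $a$; a symmetric convex function is Schur convex and a symmetric concave one is Schur concave. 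Equivalently one may invoke Proposition \ref{exchangable theorem} with $f(u)=(\sum_i u_i)^{p/2}$ on the positive orthant (and its concave analogue for $p\in(0,2]$). This yields Schur convexity for $p\in(-d,0)\cup[2,\infty)$ and Schur concavity for $p\in(0,2]$, as claimed.

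The one genuinely delicate point — and what pins down the lower endpoint $-d$ — is integrability. For $A$ positive definite the Gaussian moment $\mathbb{E}_Z|A^{1/2}Z|^{p}$ is finite if and only if $p>-d$, since the singularity of $|x|^{p}$ at the origin is integrable against a bounded density on $\mathbb{R}^d$ exactly in this range; the hypothesis $\mathbb{E}|X_i|^{p}<\infty$ then guarantees the overall expectation is finite and legitimizes the Fubini interchange above (the integrand being nonnegative). I would also flag that Schur monotonicity presupposes symmetry of the function, so the $X_i$ should be taken identically distributed (equivalently exchangeable); this is exactly what renders $w$ exchangeable and is the setting inherited from \cite{ENT}.
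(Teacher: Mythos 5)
Your proof is correct and follows essentially the same route as the paper's: decompose each $X_i =_d Y_iZ_i$, condition on the mixing matrices to write the sum as $\bigl(\sum_i a_iY_iY_i^t\bigr)^{1/2}Z$, reduce to $\mathbb{E}\bigl(\sum_i a_i w_i\bigr)^{p/2}$ with $w_i$ exchangeable, and conclude via Proposition \ref{exchangable theorem} and the convexity or concavity of $t\mapsto t^{p/2}$. Your additional remarks on integrability near $p=-d$ and on the need for identical distribution are sensible clarifications but do not change the argument.
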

\begin{proof}
Since the $X_i$'s are i.i.d. Gaussian mixtures then $X_i=_dY_iZ_i$, with $Y_i,Z_i$ independent and $Z_i$ standard Gaussian random vectors and thus $\sqrt{a_1}X_1+\cdots+\sqrt{a_n}X_n =_d  \sqrt{a_1Y_1Y_1^t+\cdots+a_nY_nY_n^t}Z$, with $Z$ a standard Gaussian independent of the $Y_i$'s. So we write
    \begin{align*}
    \mathbb{E}\left|\sqrt{a_1}X_1+\cdots+\sqrt{a_n}X_n\right|^p_2 
    &    
        =  \mathbb{E}_Y \mathbb{E}_Z \left\langle \sqrt{a_1Y_1Y_1^t+\cdots+a_nY_nY_n^t}Z,\sqrt{a_1Y_1Y_1^t+\cdots+a_nY_nY_n^t}Z  \right\rangle^{p/2} \\
 & 
         =  \mathbb{E}_Y \mathbb{E}_Z \left\langle (a_1Y_1Y_1^t+\cdots+a_nY_nY_n^t)Z,Z  \right\rangle^{p/2} = \mathbb{E}_Y \mathbb{E}_Z \left( \sum_{i=1}^n a_i\|Y_iZ\|_2^2 \right)^{p/2}.
\end{align*}
Notice that the random variables $\|Y_iZ\|_2^2$ are exchangable, that is the distribution of the vector 

$\left(\|Y_{\sigma(i)}Z\|_2^2\right)_{i=1,\ldots,n}$ is independent of the permutation $\sigma \in S_n$ and that $t^{\frac{p}{2}}$ is convex or concave depending on the value of $p$. Now the result follows from Theorem \ref{exchangable theorem} (see also Section \ref{stablemixtures} below).
\end{proof}

It seems that Theorem \ref{centralthm} alone cannot recover the above Schur monotonicity for the whole range $p\in(-1,+\infty)$, but it can rather give the Schur monotonicity for $p\in (-1,2)$ and the larger class of distributions considered. However, it should be noted that when the moments are finite the Schur monotonicity (\ref{boosted 1 dim}) is stronger than the Schur monotonicity of the moments, while it remains true even when these are infinite. In this sense Corollary \ref{momentcorollary} generalizes and complements \cite[Theorem 3]{ENT}.

In the next section we show that Theorem \ref{centralthm} gives more information on the volume of block sections of some star bodies and improves upon Lemma \ref{lemmaENT} for some specific norms.

On the other hand, for the case of $p$-pseudo-stable random variables and in general for distributions satisfying the log-concavity assumption on the Fourier transform, such comparison results (as in Corollary \ref{momentcorollary}) seem to be new.

The next result illustrates the link between our main topic and Khinchin inequalities that were mentioned in the introduction.
\begin{corollary} \label{corollary CLT}
    Let $X$ be a random vector on $\mathbb{R}^d$ with finite second moment distributed according to the probability measure $\mu(dx)=\phi(x)dx$, with $\phi$ even and $\hat{\phi}$ integrable and positive. Let $h:\mathbb{R}^d\to [0,+\infty)$ be a continuous and positive definite function. Let $Z$ be a Gaussian random vector of mean zero and $\mathrm{Cov}(X)=\mathrm{Cov}(Z)$ and $X_1,\ldots,X_n$ are i.i.d. copies of $X$, then we have:

    $\bullet$ If $\Hat{\phi}(\sqrt{\cdot}t)$ is log-convex for each $t\in \mathbb{R}^d$
    \begin{equation} \label{eqCLT}\mathbb{E}h(Z)\leq \mathbb{E}h\left( \sqrt{a_1}X_1+\cdots+ \sqrt{a_n}X_n \right) \leq \mathbb{E}h(X), \end{equation}
for all $a\in \Delta^{n-1}$.

$\bullet$ If $\Hat{\phi}(\sqrt{\cdot}t)$ is log-concave for each $t\in \mathbb{R}^d$ \begin{equation} \label{eqCLT log-conc}\mathbb{E}h(Z)\geq \mathbb{E}h\left( \sqrt{a_1}X_1+\cdots+ \sqrt{a_n}X_n \right) \geq \mathbb{E}h(X), \end{equation}
for all $a\in \Delta^{n-1}$.
\end{corollary}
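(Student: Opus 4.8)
The plan is to recognize $\mathbb{E}h\left(\sqrt{a_1}X_1+\cdots+\sqrt{a_n}X_n\right)$ as precisely the functional $H$ of Theorem \ref{centralthm} with $q=2$, and then to combine its Schur monotonicity with the known extremal positions of the simplex $\Delta^{n-1}$ under majorization, identifying the two endpoint values through the central limit theorem. Throughout I write $S_n=X_1+\cdots+X_n$ and let $H_N$ denote the functional of Theorem \ref{centralthm} built from $N$ i.i.d.\ copies.

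First I would invoke Theorem \ref{centralthm}: in the log-convex case $H_N$ is log-convex, hence convex, and being symmetric in its arguments it is Schur convex; in the log-concave case $H_N$ is Schur concave. Next I use the elementary majorization facts recalled in Section \ref{Schur background}, namely that $\left(\tfrac1n,\ldots,\tfrac1n\right)\prec a\prec(1,0,\ldots,0)$ for every $a\in\Delta^{n-1}$. Schur convexity then sandwiches $H(a)$ between the central value $H\left(\tfrac1n,\ldots,\tfrac1n\right)=\mathbb{E}h\left(\tfrac1{\sqrt n}S_n\right)$ and the vertex value $H(1,0,\ldots,0)=\mathbb{E}h(X)$ (with the reversed sandwich in the Schur concave case). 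Since the vertex value already equals $\mathbb{E}h(X)$, only the central value remains to be compared with $\mathbb{E}h(Z)$.

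The key step is to prove $\mathbb{E}h\left(\tfrac1{\sqrt n}S_n\right)\ge \mathbb{E}h(Z)$ in the log-convex case (and the reverse in the log-concave case). I would do this by a monotonicity-along-multiples argument: for $N=mn$, compare the centre $\left(\tfrac1N,\ldots,\tfrac1N\right)\in\Delta^{N-1}$ with the point $w=\left(\tfrac1n,\ldots,\tfrac1n,0,\ldots,0\right)\in\Delta^{N-1}$ whose first $n$ coordinates equal $\tfrac1n$ and whose remaining coordinates vanish. Both sum to $1$, and the centre is majorized by $w$, so Schur convexity of $H_N$ gives $\mathbb{E}h\left(\tfrac1{\sqrt N}S_N\right)=H_N\left(\tfrac1N,\ldots,\tfrac1N\right)\le H_N(w)=\mathbb{E}h\left(\tfrac1{\sqrt n}S_n\right)$, where the last equality uses $\sqrt{w_i}=\tfrac1{\sqrt n}$ on the first $n$ coordinates. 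Letting $m\to\infty$, the multivariate central limit theorem applies (since $\phi$ even forces $\mathbb{E}X=0$ and the second moment is finite), giving $\tfrac1{\sqrt N}S_N\Rightarrow Z$; and because a continuous positive definite $h$ is bounded by $h(0)$, the continuous mapping theorem together with bounded convergence yields $\mathbb{E}h\left(\tfrac1{\sqrt N}S_N\right)\to\mathbb{E}h(Z)$. Hence $\mathbb{E}h(Z)\le\mathbb{E}h\left(\tfrac1{\sqrt n}S_n\right)$, which with the sandwich above proves \eqref{eqCLT}. The log-concave case is identical with every inequality reversed, yielding \eqref{eqCLT log-conc}.

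The main obstacle is precisely the central value: one cannot simply send $n\to\infty$ because $n$ is fixed, so the embedding of $\Delta^{n-1}$ into $\Delta^{mn-1}$ via the point $w$ is what makes the argument work, letting one extract the convergent subsequence $N=mn$ whose CLT limit is exactly $\mathbb{E}h(Z)$. The only remaining points needing care are the finiteness and boundedness facts — $h\le h(0)$, $\mathbb{E}X=0$, and finite covariance — which legitimize both the expectations and the passage to the CLT limit.
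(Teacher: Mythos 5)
Your proof is correct and follows essentially the same route as the paper: apply Theorem \ref{centralthm} to get Schur monotonicity of $H$, sandwich $H(a)$ between the values at $(1/n,\ldots,1/n)$ and $(1,0,\ldots,0)$, and identify the central value's relation to $\mathbb{E}h(Z)$ via the CLT. The only difference is that you spell out the step the paper leaves implicit — the majorization $(1/N,\ldots,1/N)\prec(1/n,\ldots,1/n,0,\ldots,0)$ in $\Delta^{N-1}$ making $N\mapsto\mathbb{E}h(S_N/\sqrt N)$ monotone, together with boundedness of $h$ by $h(0)$ to pass to the limit — which is a welcome clarification rather than a new argument.
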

\begin{proof}
    
Let $X$ be a random vector on $\mathbb{R}^d$, that has finite second moment and that $\Hat{\phi}(\sqrt{\cdot}t)$ is log-convex for every $t\in \mathbb{R}^d$ . Let $\{X_i\}$ be a sequence of independent copies of $X$.
 Let also $h$ as in the statement, that in particular satisfy the conditions of Theorem \ref{centralthm}. Then, for all $a\in \Delta^{n-1}$ applying the Schur convexity result for $\left(\frac{1}{\sqrt{n}},\ldots,\frac{1}{\sqrt{n}},0\right)\prec a \prec (1,0,\ldots,0)$, we have the following chain of inequalities
\begin{equation} \label{applying CLT}
\mathbb{E} h\left(\frac{X_1+\cdots+X_n}{\sqrt{n}}\right)\leq \mathbb{E}h\left( \sqrt{a_1}X_1+\cdots+ \sqrt{a_n}X_n \right)  \leq \mathbb{E}h(X).
\end{equation}
So, by the central limit theorem and our assumptions on $h$ we get the result. Similarly for the log-concave case.

\end{proof}
\begin{remark} \label{remark Khinchine}

In the framework of moment comparison such arguments can (and do) lead to sharp $(2,p)$ Khinchin inequalities. We refer to Corollary 25 of \cite{ENT} for the argument in the one dimensional case. Even in the case of random vectors that we have here, like in Lemma \ref{lemmaENT} and  Theorem \ref{momentsweak} below, at least when the norm is given by an inner product this argument can give Khinchine inequalities. See Corollary \ref{khinchin} below. Applying now the above comparison, passing first through Corollary \ref{momentcorollary}, we get the following Khinchin inequalities.
    
\end{remark}

Applying now Corollary \ref{corollary CLT} to the functions $h_\lambda(t)=\exp(-\lambda|t|^p)$, for $p\in (0,2]$, or directly adapting the above argument to $h(t)=|t|^p$, for $p\in (-1,2]$ via Corollary \ref{momentcorollary} we get:

\begin{corollary} \label{corollary khinchine log-concave}
Let $X_1,\ldots,X_n$ be symmetric i.i.d. random variables having density $\phi$ with integrable and positive Fourier transform $\hat{\phi}$. Let $p\in (-1,2]$ and assume all the second and $p$ moments exist and $Z$ is a Gaussian random variable with $\mathrm{Var}(X_1)=\mathrm{Var}(Z)$. Denote $X=(X_1,\ldots,X_n)$. Then for each $\theta\in S^{n-1}$ it holds that
\[
\bullet \ \ \ \frac{\|Z\|_p}{\|X\|_2} \|\theta_1X_1+\cdots+\theta_nX_n\|_2
\leq \|\theta_1X_1+\cdots+\theta_nX_n\|_p
\leq \frac{\|X\|_p}{\|X\|_2}\|\theta_1X_1+\cdots+\theta_nX_n\|_2,
\]
if $\Hat{\phi}(\sqrt{t})$ is log-concave on $(0,+\infty)$, and
\[
\bullet \ \ \ \frac{\|Z\|_p}{\|X\|_2} \|\theta_1X_1+\cdots+\theta_nX_n\|_2
\geq \|\theta_1X_1+\cdots+\theta_nX_n\|_p
\geq \frac{\|X\|_p}{\|X\|_2}\|\theta_1X_1+\cdots+\theta_nX_n\|_2,
\]
if $\Hat{\phi}(\sqrt{t})$ is log-convex on $(0,+\infty)$.
\end{corollary}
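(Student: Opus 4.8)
The plan is to read the entire statement off Corollary \ref{corollary CLT}, applied to the positive definite functions $h_\lambda(t)=e^{-\lambda|t|^{s}}$ for a suitable exponent $s\in(0,2]$, after two elementary normalizations that collapse the apparently complicated constants.

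First I would record the two facts that make the claim equivalent to a bare comparison of $p$-th moments. Fix $\theta\in S^{n-1}$ and set $a_i=\theta_i^2$, so $a=(a_1,\dots,a_n)\in\Delta^{n-1}$. Since the $X_i$ are symmetric and independent, $S_\theta:=\theta_1X_1+\cdots+\theta_nX_n$ has the same law as $\sqrt{a_1}X_1+\cdots+\sqrt{a_n}X_n$, so all its moments coincide with those of the vector appearing in Corollary \ref{corollary CLT}. Moreover, because the $X_i$ are i.i.d. with mean zero (symmetry together with a finite second moment) and $\sum_i\theta_i^2=1$, we have $\|S_\theta\|_2^2=\sum_i\theta_i^2\,\mathbb{E}X_1^2=\mathbb{E}X_1^2$; that is, the $L^2(\Omega)$ norm is constant equal to $\|X\|_2=\|Z\|_2$ along the whole sphere (here $\|X\|_2=(\mathbb{E}X_1^2)^{1/2}$ and $\|Z\|_2=(\mathrm{Var}\,Z)^{1/2}$ agree by hypothesis). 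Consequently the stated inequalities are exactly the homogeneous rewriting of
\[
\|Z\|_p\ \le\ \|S_\theta\|_p\ \le\ \|X\|_p
\]
in the log-concave case, and of the reverse chain in the log-convex case, so it remains only to establish this plain moment comparison.

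Next I would pass from the exponential comparison to the moment comparison. For each $\lambda>0$ the function $h_\lambda(t)=e^{-\lambda|t|^{s}}$ is continuous, nonnegative and positive definite whenever $s\in(0,2]$, so Corollary \ref{corollary CLT} in the log-concave case gives, for every $\lambda>0$,
\[
\mathbb{E}e^{-\lambda|Z|^{s}}\ \ge\ \mathbb{E}e^{-\lambda|S_\theta|^{s}}\ \ge\ \mathbb{E}e^{-\lambda|X|^{s}},
\]
with all inequalities reversed in the log-convex case. For a positive exponent $p\in(0,2]$ I take $s=p$, subtract from $1$, divide by $\lambda$ and let $\lambda\to0^+$: since $\tfrac{1-e^{-\lambda u}}{\lambda}\uparrow u$ as $\lambda\downarrow0$, monotone convergence turns the display into $\mathbb{E}|Z|^p\le\mathbb{E}|S_\theta|^p\le\mathbb{E}|X|^p$. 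For a negative exponent $p\in(-1,0)$ I instead take $s=|p|\in(0,1)\subset(0,2]$ and integrate in $\lambda$ over $(0,\infty)$; by Tonelli and $\int_0^\infty e^{-\lambda u}\,d\lambda=u^{-1}$ one gets $\int_0^\infty\mathbb{E}e^{-\lambda|S_\theta|^{|p|}}\,d\lambda=\mathbb{E}|S_\theta|^{p}$, so the same chain of moment inequalities survives. Applying $x\mapsto x^{1/p}$ — increasing for $p>0$, decreasing for $p<0$ — converts these into the asserted norm inequalities; the sign reversal for negative $p$ is precisely why the log-concave/log-convex dichotomy yields the same direction of the final chain for negative as for positive $p$.

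The only points demanding care, and where the hypotheses enter, are the interchanges of the $\lambda\to0$ limit and the $\lambda$-integration with the expectation. These are licensed by the finiteness of $\mathbb{E}|S_\theta|^p$, $\mathbb{E}|X|^p$ and $\mathbb{E}|Z|^p$; the last is finite exactly when $p>-1$, which is what pins down the lower endpoint of the range, and $\mathbb{E}|S_\theta|^p$ is in any event squeezed between the two finite quantities by the comparison itself. No genuinely hard step appears beyond Corollary \ref{corollary CLT}: the work lies entirely in the bookkeeping of the Laplace representation of $|t|^p$ and in tracking signs through the map $x\mapsto x^{1/p}$, with the boundary case $p=2$ reducing all three norms to the common value $\|X\|_2$ and hence to equalities.
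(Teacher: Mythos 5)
Your proposal is correct and follows essentially the same route as the paper: the paper derives the corollary by applying Corollary \ref{corollary CLT} to $h_\lambda(t)=e^{-\lambda|t|^p}$ for $p\in(0,2]$ and handling $p\in(-1,0)$ via the Laplace-transform manipulations of Corollary \ref{momentcorollary} (differentiating at $\lambda=0$, respectively integrating in $\lambda$), which is exactly what you do, and your preliminary observation that $\|\theta_1X_1+\cdots+\theta_nX_n\|_2$ is constant on the sphere is the same normalization the paper uses implicitly. Your write-up actually supplies more detail than the paper (which compresses the argument into one sentence citing \cite[Corollary 25]{ENT}), and the sign-tracking through $x\mapsto x^{1/p}$ for $p<0$ is handled correctly.
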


These Khinchin inequalities are sharp and seem to be new in the log-concave case and in the log-convex case for random variables that are not Gaussian mixtures. Otherwise, for Gaussian mixtures and the above range of exponents they recover \cite[Corollary 25]{ENT}, where the argument is taken from. For the higher dimensional version see Corollary \ref{khinchin} below.
\par \vspace{0.5em}

$\bullet$ \textbf{Case $q<2$.} In this case the class of the Gaussian mixtures stop being a "universal" example of log-convexity (or log-concavity) of the function $\hat{\phi}((\cdot)^{\frac{1}{q}}t):[0,+\infty)\to (0,+\infty)$ where as usual by $\phi$ we denote the density and $t\in \mathbb{R}^d$.

In the high dimensional case if we consider a multivariate symmetric $p$-stable random vector $X$ with spectral measure $\sigma_0$ (see Section \ref{section prelims}), we see that, for each $t\in \mathbb{R}^d$ and $a>0$ we have
    \[
    \log\hat{\phi}(a^{\frac{1}{q}}t)= -a^{\frac{p}{q}} \int_{S^{d-1}}|\langle t,\theta
 \rangle|^p\sigma_0(d\theta).
    \]
This shows that depending on the ratio $\frac{p}{q}$, the measure $\mu$ with such density will satisfy the condition of either the log-convexity or the log-concavity of $\hat{\phi}((\cdot)^{\frac{1}{q}}t)$.

In Section \ref{stablemixtures} we comment on a general class of such functions corresponding in the log-convex scenario in the spirit of Gaussian mixtures.

Regarding the pseudo-stable random variables, they still form an example for the log-concave case.

Finally, we would like to mention in passing that for some random variables with infinite variance a version of Corollary \ref{corollary CLT} would be possible, and in that case the limiting distribution would be a $q$-stable distribution, using the generalized Central Limit Theorem.

\par\vspace{1em}
$\bullet$ \textbf{Case $q=1$.} In this particular case, the condition one wants is just the log-concavity or log-convexity of the Fourier transform of the density.
The $p$-stable laws along with the $p$-pseudo-stable random variables are all examples for the log-concave case now. Another example, is the measure with density $\left(\frac{sint}{t}\right)^2$, which also is not a Gaussian mixture.

An interesting feature of this class is that once a density has log-convex Fourier transform, then the associated probability measure will not obey the (weak) law of Large numbers, by applying an argument similar to the case $q=2$ for the central limit theorem.

Indeed, suppose that we are given a probability measure with even density $\phi$, such that its Fourier transform is integrable and log-convex. Applying Theorem \ref{centralthm} for any suitable function $h$, and in particular from the fact that we have the Schur convexity of the function $H(a)=\mathbb{E}h(a_1X_1+\cdots+a_nX_n)$ we get
\[
    \mathbb{E}h\left(  \frac{X_1+\cdots+X_n}{n} \right) \leq \mathbb{E}h(X) .   \]
This shows that for any such probability measure the (weak) law of large numbers cannot hold, since we would have $h(0)\leq \mathbb{E}h(X)$ for any continuous and positive definite $h$ which leads to contradiction, by considering e.g. $h_a(x)=\exp(-ax^2)$ and taking the limit $a\to +\infty$, unless $X$ is almost surely equal to zero.

\par \vspace{1em}

$\bullet$ \textbf{Case $q>2$.} The class of Gaussian mixtures is always an example for the log-convex case, which can be seen using Hölder's inequality. Moreover, once $X$ is a random variable the form $YZ_p$, with $Z_p$ being $p$-pseudo-stable and $Y$ being positive 
random variable independent of $Z_p$, then if $q\geq p$ it is an example for the log-convex scenario. Indeed, we have that
\[
\mathbb{E}\exp(-2\pi it^{1/q}X)=\mathbb{E}_Y \exp\left(-c_1t^\frac{p}{q}Y^p-c_2t^\frac{2}{q}Y^2\right),
\]
which is seen to be log-convex using Hölder.

\section{Higher dimensional applications} \label{Section High dim}

\subsection{Representation of the uniform probability measure on $B_p^n(K)$} As Schechtman and Zinn \cite{SZ} and Rachev and Ruschendorf \cite{RR} did for the cone measure of the ball of $l_p^n$, the following can be proven for the cone measure of $B_p^n(K)$. We provide the proof for completeness: \par 

\begin{proposition} \label{propindep} Let $p>0$ and $X_1,\ldots,X_n$ be i.i.d. random vectors on $\mathbb{R}^d$ with density  $\frac{1}{|K|\Gamma(1+\frac{d}{p})}\exp(-\|x\|_K^p)$.\par
If $X=(X_1,\ldots,X_n)$ then \par
\textbf{1.} $\frac{X}{\|X\|_{l_p^n(K)}}$ induces the measure $\mu_{B_p^n(K)}$ on $S_p^n(K)$.\par
\textbf{2.} $\|X\|^p_{l_p^n(K)}$ follows the Gamma$\left(\frac{nd}{p},1\right)$ distribution and is independent of $\frac{X}{\|X\|_{l_p^n(K)}}$.\end{proposition}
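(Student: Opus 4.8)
The plan is to exploit that the joint density of $X=(X_1,\dots,X_n)$ is radial with respect to the quasi-norm $\|\cdot\|_{l_p^n(K)}$, and then to read off all three claims from a single change of variables via the polar integration formula (\ref{eq: polar integration formula}). First I would observe that, by independence, the density of $X$ on $\mathbb{R}^{nd}$ is
\[
f(x)=\prod_{k=1}^n\frac{1}{|K|\Gamma(1+\frac{d}{p})}\exp(-\|x_k\|_K^p)=\frac{1}{\left(|K|\Gamma(1+\frac{d}{p})\right)^n}\exp\left(-\|x\|_{l_p^n(K)}^p\right),
\]
so that $f$ depends on $x$ only through $\|x\|_{l_p^n(K)}$. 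In particular the law of $X$ is invariant under all maps preserving $\|\cdot\|_{l_p^n(K)}$, which already signals that the radial and angular parts should decouple.

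Next, for a bounded measurable test function $G$ on $[0,+\infty)\times S_p^n(K)$, I would compute
\[
\mathbb{E}\,G\!\left(\|X\|_{l_p^n(K)}^p,\tfrac{X}{\|X\|_{l_p^n(K)}}\right)=\int_{\mathbb{R}^{nd}}G\!\left(\|x\|_{l_p^n(K)}^p,\tfrac{x}{\|x\|_{l_p^n(K)}}\right)f(x)\,dx,
\]
and apply (\ref{eq: polar integration formula}) to the star body $B_p^n(K)\subset\mathbb{R}^{nd}$ (ambient dimension $nd$). Writing $x=r\theta$ with $\theta\in S_p^n(K)$ and using $\|r\theta\|_{l_p^n(K)}=r$, this becomes
\[
\frac{nd\,|B_p^n(K)|}{\left(|K|\Gamma(1+\frac{d}{p})\right)^n}\int_{S_p^n(K)}\int_0^\infty G(r^p,\theta)\,e^{-r^p}\,r^{nd-1}\,dr\,d\mu_{B_p^n(K)}(\theta).
\]
The substitution $u=r^p$ gives $r^{nd-1}dr=\frac{1}{p}u^{nd/p-1}du$, and the volume formula (\ref{volume of }) together with $\Gamma(1+\frac{nd}{p})=\frac{nd}{p}\Gamma(\frac{nd}{p})$ collapses the leading constant to exactly $\frac{1}{\Gamma(nd/p)}$, yielding
\[
\mathbb{E}\,G\!\left(\|X\|_{l_p^n(K)}^p,\tfrac{X}{\|X\|_{l_p^n(K)}}\right)=\int_{S_p^n(K)}\left(\int_0^\infty G(u,\theta)\,\frac{u^{nd/p-1}e^{-u}}{\Gamma(nd/p)}\,du\right)d\mu_{B_p^n(K)}(\theta).
\]

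Finally, I would read off the statement from this product form. Taking $G$ a function of $\theta$ alone shows that $X/\|X\|_{l_p^n(K)}$ has law $\mu_{B_p^n(K)}$, giving part \textbf{1}; taking $G$ a function of $u$ alone shows $\|X\|_{l_p^n(K)}^p\sim\mathrm{Gamma}(nd/p,1)$; and the fact that the joint law splits as the product of these two marginals is precisely their independence, giving part \textbf{2}. The computation itself is routine; the only point needing genuine care is the bookkeeping of the normalizing constants, namely matching the polar-coordinate Jacobian $\frac{1}{p}u^{nd/p-1}$ against the volume (\ref{volume of }) to recover the Gamma normalization, together with checking that (\ref{eq: polar integration formula}) applies, i.e. that $B_p^n(K)$ is a star body with continuous Minkowski functional so that $\mu_{B_p^n(K)}$ is well defined (which holds since $\|\cdot\|_K$ is a continuous quasi-norm and hence so is $\|\cdot\|_{l_p^n(K)}$).
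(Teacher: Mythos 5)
Your proposal is correct and follows essentially the same route as the paper: both compute the joint law of the radial and angular parts via the polar integration formula (\ref{eq: polar integration formula}) applied to $B_p^n(K)$ together with the volume formula (\ref{volume of }), and read off the two marginals and their independence from the resulting product form. The only cosmetic difference is that you test against general functions $G(u,\theta)$ and carry out the substitution $u=r^p$ explicitly, whereas the paper tests against indicators of product sets $A\times B$ and leaves the Gamma identification implicit.
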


\begin{proof} Let $A$ and $B$ be Borel subsets of $S_p^n(K)$ and $[0,+\infty)$ respectively, then we have
\[
\mathbb{P}\left(\frac{X}{\|X\|_{l_p^n(K)}}\in A \ \text{and} \ \|X\|_{l_p^n(K)}\in B\right) = \frac{1}{|K|^n\Gamma(1+\frac{d}{p})^n} \int_{\mathbb{R}^{nd}}\mathbbm{1}_{\{\frac{x}{\|x\|_{l_p^n(K)}}\in A\}}(x)\mathbbm{1}_{\{\|x\|_{l_p^n(K)}\in B\}}(x) e^{-\|x\|^p_{l_p^n(K)}} \ dx
\]
\[
=\frac{nd|B_p^n(K)|}{|K|^n\Gamma(1+\frac{d}{p})^n} \int_{A}\int_Br^{nd-1}e^{-r^p}drd\mu_{B_p^n(K)}(\theta) = \mu_{B_p^n(K)}(A)\frac{nd}{\Gamma\left(1+\frac{nd}{p}\right)}\int_Br^{nd-1}e^{-r^p}dr,
\]
where we used the polar integration formula (\ref{eq: polar integration formula}) for the body $B_p^n(K)$ and the fact that $|B_p^n(K)|=\frac{|K|^n\Gamma(1+\frac{d}{p})^n}{\Gamma(1+\frac{nd}{p})}$ from (\ref{volume of }). This proves both claims.
\end{proof}

Next, proceeding as Barthe, Guédon, Mendelson and Naor did in \cite{BGMN}, we obtain a representation of the uniform probability measure on $B_p^n(K).$ 

\begin{proposition} \label{representationprop}
Let $n$ i.i.d. random vectors $X_1,\ldots,X_n$ independent and identically distributed as above and $Z$ a random variable independent again of the rest, with density $e^{-t}\mathbbm{1}_{(0,\infty)}(t)$. Then, the random vector \[\frac{(X_1,\ldots,X_n)}{ \left( \|X\|_{l_p^n(K)}^p+Z \right)^{\frac{1}{p}}}\] generates the uniform probability measure on $B_p^n(K)$.\end{proposition}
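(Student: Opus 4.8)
The plan is to disintegrate the candidate vector into its radial and directional parts with respect to the gauge $\|\cdot\|_{l_p^n(K)}$, and to match both against the polar decomposition of the uniform measure furnished by (\ref{eq: polar integration formula}). Set $R:=\|X\|^p_{l_p^n(K)}$ and $\theta := X/\|X\|_{l_p^n(K)}\in S_p^n(K)$. Since the direction of a vector is unchanged under positive rescaling, the candidate vector factors as
\[
\frac{(X_1,\dots,X_n)}{(\|X\|^p_{l_p^n(K)}+Z)^{1/p}} = \frac{\|X\|_{l_p^n(K)}}{(R+Z)^{1/p}}\,\theta = \left(\frac{R}{R+Z}\right)^{1/p}\theta =: \rho\,\theta,
\]
so that $\theta$ is its direction on $S_p^n(K)$ and $\rho\in[0,1]$ is its radius.

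First I would record the input supplied by Proposition \ref{propindep}: the direction $\theta$ is distributed according to the cone measure $\mu_{B_p^n(K)}$, the variable $R=\|X\|^p_{l_p^n(K)}$ follows the $\mathrm{Gamma}(nd/p,1)$ law, and the two are independent. Because $Z$ is independent of $X$, the radius $\rho=(R/(R+Z))^{1/p}$ is a function of $(R,Z)$ alone and hence independent of $\theta$. Thus the representation already has the desired product structure $\rho\perp\theta$ with the correct directional marginal, and it remains only to identify the law of $\rho$.

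Next I would compute the distribution of the radius. With $R\sim\mathrm{Gamma}(nd/p,1)$ and $Z\sim\mathrm{Gamma}(1,1)$ independent (the density $e^{-t}\mathbbm{1}_{(0,\infty)}$ being exactly that of a $\mathrm{Gamma}(1,1)$ variable), the classical Gamma--Beta relation gives $R/(R+Z)\sim\mathrm{Beta}(nd/p,1)$, whose density is $\tfrac{nd}{p}\,w^{nd/p-1}$ on $[0,1]$ and whose distribution function is $w\mapsto w^{nd/p}$. Consequently $\rho^{nd}=\bigl(R/(R+Z)\bigr)^{nd/p}$ has distribution function $v\mapsto v$ on $[0,1]$, i.e. $\rho^{nd}$ is uniform on $[0,1]$, equivalently $\rho$ has density $nd\,r^{nd-1}$ on $[0,1]$.

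Finally I would conclude by comparison with the polar decomposition of the uniform measure: applying (\ref{eq: polar integration formula}) in ambient dimension $nd$ to the body $B_p^n(K)$ shows that a vector uniform on $B_p^n(K)$ is precisely of the form $\rho'\theta'$ with $\theta'\sim\mu_{B_p^n(K)}$ and an independent radius $\rho'$ of density $nd\,r^{nd-1}$ on $[0,1]$. Since the direction and the (independent) radius of $\rho\theta$ reproduce exactly these marginals, the two laws coincide, which is the assertion. The crux of the argument is the radial computation---recognizing the exponential as a $\mathrm{Gamma}(1,1)$ factor so that the Gamma--Beta identity applies, and checking that the exponents conspire to make $\rho^{nd}$ exactly uniform, thereby matching the $r^{nd-1}$ radial weight of the uniform measure; everything else is bookkeeping on the polar decomposition.
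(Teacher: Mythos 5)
Your proof is correct. It rests on the same two pillars as the paper's: the independence and distributional facts from Proposition \ref{propindep}, and the polar integration formula (\ref{eq: polar integration formula}) applied to $B_p^n(K)$ in ambient dimension $nd$. The only real difference is in how the radial law is identified. The paper writes $\mathbb{E}f\bigl(X/(\|X\|^p_{l_p^n(K)}+Z)^{1/p}\bigr)$ as an explicit double integral over the laws of $Z$ and $\|X\|^p_{l_p^n(K)}$ and performs the substitution $u/(u+t)=r^p$ by hand, evaluating the resulting $t$-integral via the Gamma function; you instead recognize $R/(R+Z)$ as a $\mathrm{Beta}(nd/p,1)$ variable through the standard Gamma--Beta identity, deduce that $\rho^{nd}$ is uniform on $[0,1]$, and match marginals. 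Your packaging is cleaner and less error-prone (the paper's displayed computation in fact contains sign typos in the exponentials that your route never encounters), at the cost of invoking the Beta identity as a black box; the paper's computation is self-contained and is the same change of variables carried out explicitly. One small point worth making explicit in your write-up: the independence of $\rho$ from $\theta$ follows because $\theta\perp R$ and $Z\perp(X)$ together give $\theta\perp(R,Z)$, hence $\theta\perp\rho$ --- you assert this correctly but it deserves the one-line justification.
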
 

\begin{proof}
Let $f\in L_1(\mathbb{R}^{nd})$. For shortness we will denote $\|\cdot\|_{l_p^n(K)}$ by $\|\cdot\|_p$. We write 
\[
\mathbb{E}f\left(\frac{X}{(\|X\|_{l_p^n(K)}^p+Z)^\frac{1}{p}}\right)= \int_0^\infty e^{-t}\mathbb{E}f\left(\frac{X}{(\|X\|_{p}^p+t)^\frac{1}{p}}\right)dt
\]

\[
=\int_0^\infty \int_0^\infty e^{-t}e^{-u}u^{\frac{nd}{p}-1}\frac{1}{\Gamma(\frac{nd}{p})} \mathbb{E}f\left( \left(\frac{u}{u+t}\right)^\frac{1}{p} \frac{X}{\|X\|_p}\right)  dudt,
\]
where we have used the independence property stated in Proposition \ref{propindep}. Setting $\frac{u}{u+t}=r^p$, so $u = \frac{t}{1-r^p}-t$ and $du = \frac{pt}{(1-r^p)^2}r^{p-1}dr$, and we obtain that the above expectation equals
\[
\int_0^\infty \int_0^1 e^{\frac{t}{1-r^p}}t^{\frac{nd}{p}-1}\frac{r^{nd-p}}{(1-r^p)^{\frac{nd}{p}-1}} \frac{1}{\Gamma(\frac{nd}{p})}\mathbb{E}f\left(r\frac{X}{\|X\|_p}\right)\frac{pt}{(1-r^p)^2}r^{p-1}drdt
\]
\[
=\frac{nd}{\Gamma(\frac{nd}{p}+1)}\int_0^1 \frac{r^{nd-1}}{(1-r^p)^{\frac{nd}{p}+1}}\mathbb{E}f\left(r\frac{X}{\|X\|_p}\right) \int_0^\infty e^{\frac{t}{1-r^p}}t^{\frac{nd}{p}} dt dr = nd \int_0^1 r^{nd-1}\mathbb{E}f\left(r\frac{X}{\|X\|_p}\right)dr \]
\[=nd \int_0^1 r^{nd-1}\mathbb{E}f\left(r\frac{X}{\|X\|_p}\right)dr = \frac{1}{|B_p^n(K)|}\int_{B_p^n(K)}f(x) \ dx
\]

\noindent where we used again the integration formula with $L=B_p^n(K)$ (and thus $N=nd$) and the fact that the vector $X/\|X\|_{l_p^n(K)}$ induces the cone measure of $B_{p}^n(K)$.\end{proof}

\begin{remark} As in \cite{BGMN}, one can consider, instead of the exponential density as the law of the random variable $Z$, other densities and produce other measures as a result. \end{remark} 

\subsection{The subspaces of $L_p,$ $ p\in (0,2]$ - Sections}
 Here the star body $K$ that is considered will be such that $(\mathbb{R}^d,\|\cdot\|_K)$ embeds isometrically to $L_p$ for some $p\in (0,2]$. Recently, Eskenazis in \cite{eskenazis} considered these type of bodies and proved the following result, which we state in a way that it fits the notation so far: 

The function \[\Delta^{n-1}\ni (a_1,\ldots,a_n) \mapsto |B_p^n(K)\cap H_{\sqrt{a}}|,\] is Schur convex.

Using the same general strategy and some crucial steps from \cite{eskenazis}, we will give a proof of the following slight strengthening by employing Proposition \ref{sectionlogsth}: 

\begin{theorem} \label{eskethm}
    Let $p\in (0,2]$ and suppose that $(\mathbb{R}^d,\|\cdot\|_K)$ embeds isometrically to $L_p$. Then the function
   \[ (a_1,\ldots,a_n) \mapsto |B_p^n(K)\cap H_{\sqrt{a}}|, \]
   is log-convex in $\Delta^{n-1}$
\end{theorem}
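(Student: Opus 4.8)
The plan is to reduce the statement to Proposition~\ref{sectionlogsth}, applied to the product measure whose one-dimensional factor has density proportional to $\exp(-\|\cdot\|_K^p)$. Concretely, I would put $c=\frac{1}{|K|\Gamma(1+d/p)}$, take $\phi(x)=c\exp(-\|x\|_K^p)$ (the density from Proposition~\ref{propindep}) and $\mu(dx)=\phi(x)\,dx$. The first task is the geometric identity linking the section volume to the section function $S_{\mu^n}$. Since $\sum_{k=1}^n\|x_k\|_K^p=\|x\|_{l_p^n(K)}^p$, the definition of $S_{\mu^n}$ gives
\[
S_{\mu^n}(\sqrt a)=c^n\int_{H_{\sqrt a}}\exp\bigl(-\|x\|_{l_p^n(K)}^p\bigr)\,\mathcal{H}^{nd-d}(dx).
\]
Viewing the quasi-norm $\|\cdot\|_{l_p^n(K)}$ restricted to the $(n-1)d$-dimensional linear subspace $H_{\sqrt a}$, whose unit ball is exactly $B_p^n(K)\cap H_{\sqrt a}$, I would apply the identity $\int_{\mathbb{R}^N}\exp(-\|x\|_L^p)\,dx=|L|\,\Gamma(N/p+1)$ with $N=(n-1)d$ to obtain
\[
S_{\mu^n}(\sqrt a)=c^n\,\Gamma\!\left(\tfrac{(n-1)d}{p}+1\right)\,\bigl|B_p^n(K)\cap H_{\sqrt a}\bigr|.
\]
Thus $|B_p^n(K)\cap H_{\sqrt a}|$ is a fixed positive multiple of $G(a)=S_{\mu^n}(\sqrt a)$, and log-convexity of the latter on $\Delta^{n-1}$ is inherited by the former.

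It then remains to verify the hypothesis of Proposition~\ref{sectionlogsth}(1), namely that for each $t$ the map $r\mapsto\hat\phi(\sqrt r\,t)$ is log-convex on $[0,\infty)$; this also supplies the evenness, positivity and integrability of $\hat\phi$ required there. The isometric embedding of $(\mathbb{R}^d,\|\cdot\|_K)$ into $L_p$, $p\in(0,2]$, yields a finite measure $\sigma_0$ on $S^{d-1}$ with $\|x\|_K^p=\int_{S^{d-1}}|\langle x,\theta\rangle|^p\,d\sigma_0(\theta)$, so $\exp(-\|x\|_K^p)$ is precisely the characteristic function of a symmetric $p$-stable vector $W$; equivalently $\hat\phi=c\,g$, where $g$ is the (even) density of $W$. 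Using that $u\mapsto e^{-u^{p/2}}$ is completely monotone for $p\in(0,2]$, the spectral representation above upgrades to $\exp(-\|x\|_K^p)=\mathbb{E}_M\exp(-\langle Mx,x\rangle)$ over a random positive semidefinite matrix $M$; that is, $W$ is a Gaussian mixture $W=_d YZ$ with density of the form (\ref{density of GM}). Consequently
\[
\hat\phi(\sqrt r\,t)=c\,g(\sqrt r\,t)=c\,\mathbb{E}_Y\!\left[(2\pi)^{-d/2}(\det YY^t)^{-1/2}\exp\!\Bigl(-\tfrac r2\langle (YY^t)^{-1}t,t\rangle\Bigr)\right]=c\int_0^\infty e^{-r\beta}\,d\tau(\beta)
\]
for a positive measure $\tau$, a Laplace transform and therefore completely monotone, hence log-convex in $r$.

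With both ingredients in hand I would conclude as follows: Proposition~\ref{sectionlogsth}(1) gives that $G(a)=S_{\mu^n}(\sqrt a)$ is log-convex on $\Delta^{n-1}$, and since $|B_p^n(K)\cap H_{\sqrt a}|$ is a positive constant multiple of $G(a)$, it is log-convex as well.

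The main obstacle is the second paragraph, namely certifying the log-convexity hypothesis. The reduction identity of the first paragraph is essentially bookkeeping with the coarea formula and the Gamma integral, once one notes that $|\sqrt a|=1$ on $\Delta^{n-1}$ so that no Jacobian factor survives and $\mathcal{H}^{nd-d}$ on the linear subspace $H_{\sqrt a}$ is Lebesgue measure. By contrast, showing that $\exp(-\|\cdot\|_K^p)$ is a mixture of centered Gaussians — equivalently that the associated $p$-stable vector is a Gaussian mixture — is where the hypothesis $K\hookrightarrow L_p$ with $p\le 2$ is genuinely used, through the complete monotonicity of $u\mapsto e^{-u^{p/2}}$ together with the spectral representation of the $L_p$-norm. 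For a general, non-discrete spectral measure $\sigma_0$ this step requires either a subordination/Poisson-process argument or an approximation by finitely supported measures, and that is the point demanding the most care.
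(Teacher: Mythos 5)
Your strategy coincides with the paper's. The reduction in your first paragraph is exactly what the paper does: the Meyer--Pajor identity $\int_E e^{-N(x)^p}\,\mathcal{H}^k(dx)=\Gamma(1+\tfrac{k}{p})\,|E\cap B|$ converts the block-section volume into the section function $S_{\mu^n}(\sqrt a)$ for the product of the measures with density proportional to $e^{-\|\cdot\|_K^p}$ (this is (\ref{sections of cb})), and the whole theorem then rests on verifying the hypothesis of Proposition \ref{sectionlogsth}(1), namely that the Fourier transform of $e^{-\|\cdot\|_K^p}$ --- which is, up to a constant, the density of the symmetric $p$-stable vector with spectral measure $\sigma_0$ --- is log-convex along $r\mapsto \sqrt r\,t$. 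Your Laplace-transform observation (a mixture of centered Gaussian densities evaluated at $\sqrt r\,t$ is completely monotone, hence log-convex, in $r$) is correct, as are the positivity, evenness and integrability checks needed to invoke Proposition \ref{sectionlogsth}.

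The one incomplete step is the one you flag yourself: asserting that for a \emph{general} spectral measure the stable density is a matrix-Gaussian mixture. For finitely supported $\sigma_0$ with spanning atoms this is exactly Bernstein's theorem applied to $s\mapsto e^{-c_j s^{p/2}}$ at $s=\langle x,u_j\rangle^2$, giving $e^{-\|x\|^p}=\int e^{-\langle A_a x,x\rangle}d\nu(a)$ with $A_a=\sum_j a_j\,u_j\otimes u_j$ invertible, so the Fourier transform is an honest mixture of Gaussians. The paper's Lemma \ref{lemmanorm} handles general $\sigma_0$ by first using the linear invariance of section volumes (\cite[Lemma 2.1]{eskenazis}) to put $K$ in Lewis' position, so that $\sigma_0$ may be taken isotropic, then invoking Barthe's approximation \cite{bartheapprox} of isotropic measures by discrete isotropic ones; crucially it then passes only the \emph{log-convexity} of $\hat\phi_k(\sqrt s\,t)$ to the pointwise limit $\hat\phi(\sqrt s\,t)$ (log-convexity survives pointwise limits), and never needs the limiting density to be a Gaussian mixture itself. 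Your stronger formulation (the limit \emph{is} a mixture) would require a tightness or LePage-series argument as you note; the paper's device sidesteps this and is the cleanest way to close the gap you identified. With that step supplied, your argument is the paper's argument.
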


To start with, in \cite{MP} the sections of $p$-balls, were restated to product measures. The key point is the following. Let $N:\mathbb{R}^d\to\mathbb{R}_+$ be a continuous homogeneous function that is zero only at $0\in \mathbb{R}^d$, then for the star body $B=\{x: N(x)\leq1\}$, any  $p>0$ and any $k$-dimensional subspace $E$ of $\mathbb{R}^d$, 

\begin{equation}\label{sectionshigh}
\Gamma\left(1+\frac{k}{p}\right)|E\cap B| = \int_E e^{-N(x)^p}\mathcal{H}^k(dx)= \lim_{\varepsilon\to0} \frac{1}{\varepsilon^{n-k}}\int_{E(\varepsilon)} e^{-N(x)^p}dx
\end{equation}
where if $E=\{u_1,\ldots,u_{n-k}\}^\perp$, $E(\varepsilon):= \{x\in \mathbb{R}^d : |\langle x,u_i\rangle|\leq \frac{\varepsilon}{2}, \text{  for  } i=1,\ldots,n-k\}$.

Choosing $N(x)= \|x\|_{l_p^n(K)}$, we obtain the following expression for block sections of $B_p^n(K)$
\begin{gather} \label{sections of cb}
\Gamma\left(1+\frac{nd-d}{p}\right)|B_p^n(K)\cap\{x=(x_1,\ldots,x_n)\in \mathbb{R}^{nd} \;\; : \sum_{k=1}^n\theta_kx_k=0\} | = \\ \int_{\sum \theta_kx_k=0} \exp(-\|x\|_{l_p^n(K)}^p)\mathcal{H}^{nd-d}(dx)= S_{\mu^n}(\theta),
\end{gather}
for $\mu(dx)=\phi(x)dx$ and $\phi(x)=\exp(-\|x\|_K^p)$. In particular, if we have that $\hat{\phi}(t\sqrt{\cdot})$, is log-convex  for each $t\in \mathbb{R}^d$, applying Proposition \ref{sectionlogsth} proves Theorem \ref{eskethm}. From Section \ref{section prelims} what we know for sure is that this function is positive definite. Putting the body in Lewis' position we have the following: 

\begin{lemma} \label{lemmanorm}
Let $p\in (0,2]$ and suppose that $(\mathbb{R}^d,\|\cdot\|_K)$ embeds isometrically to $L_p$ and $K$ is in Lewis' position. If $\phi=\exp(-\|\cdot\|_K^p)$, then for each $t\in \mathbb{R}^d$, the function $(0,+\infty)\ni s\mapsto\hat{\phi}(t\sqrt{s})$ is log-convex.\end{lemma}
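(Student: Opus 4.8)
The plan is to show that $\phi=e^{-\|\cdot\|_K^p}$ is, up to a positive multiplicative constant, the density of a \emph{vector Gaussian mixture}; the asserted log-convexity then follows immediately from the computation already recorded for Gaussian mixtures in the examples, together with Hölder's inequality. Indeed, suppose $\phi(x)=c\,\mathbb{E}_W\!\left[\det(\sqrt{2\pi}\,W)^{-1}\exp(-\tfrac12|W^{-1}x|^2)\right]$ for some a.s. positive-definite random matrix $W$, as in (\ref{density of GM}). Taking Fourier transforms gives, exactly as in (\ref{fourier of GM}), that $\hat\phi(x)=c\,\mathbb{E}_W\!\left[\exp(-2\pi^2|W^{t}x|^2)\right]$, whence $\hat\phi(t\sqrt{s})=c\,\mathbb{E}_W\!\left[\exp(-2\pi^2 s\,|W^{t}t|^2)\right]$. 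For each realization of $W$ the map $s\mapsto\exp(-2\pi^2 s\,|W^{t}t|^2)$ is log-affine, hence log-convex, and an average of log-convex functions is log-convex by Hölder (as in the proof of Proposition \ref{sectionlogsth}). This yields the conclusion for every fixed $t\in\mathbb{R}^d$.

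Thus everything reduces to representing $\phi$ as a Gaussian mixture density, and this is where $p\in(0,2]$ and the Lewis position enter. By the characterization of isometric embedding into $L_p$ recalled in Section \ref{section prelims}, there is an isotropic measure $\nu$ on $S^{d-1}$ with $\|x\|_K^p=\int_{S^{d-1}}|\langle x,u\rangle|^p\,d\nu(u)$, so that $\phi(t)=\exp\!\big(-\int_{S^{d-1}}|\langle t,u\rangle|^p\,d\nu(u)\big)$ is precisely the characteristic function of a symmetric $p$-stable random vector $Y$ with spectral measure $\nu$ (Section \ref{subsection mixtures completely monnot etc}). Consequently, by Fourier inversion and evenness, $\hat\phi$ equals the density $g_Y$ of $Y$, and the requirement that $\phi$ be a Gaussian mixture density is equivalent to $Y$ itself being a Gaussian mixture.

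The core step is therefore to produce the Gaussian mixture representation of $Y$ for $p\in(0,2]$. The scalar input is that $v\mapsto e^{-|v|^p}$ is a Gaussian mixture: since $a\mapsto e^{-a^{p/2}}$ is completely monotone for $p\in(0,2]$, Bernstein's theorem provides a positive measure $m_p$ with $e^{-|v|^p}=\int_0^\infty e^{-\lambda v^2}\,dm_p(\lambda)$. To pass to the vector with a general spectral measure, I would attach to the directions an independent family of positive $p/2$-stable scales distributed according to $\nu$ via a LePage / Poisson point-process construction, forming the random positive-definite matrix $M=\sum_j A_j\,(u_j\otimes u_j)$. Campbell's formula then gives $\mathbb{E}\exp\!\big(-2\pi^2\langle t,Mt\rangle\big)=\exp\!\big(-c_p\int_{S^{d-1}}|\langle t,u\rangle|^p\,d\nu(u)\big)=e^{-\|t\|_K^p}$ after absorbing the constant into the normalization of $\nu$, so that $Y\stackrel{d}{=}\sqrt{M}\,G$ with $G$ standard Gaussian independent of $M$. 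This is exactly the mixture representation (with $W=\sqrt{M}$) required in the first paragraph, and completes the proof.

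The main obstacle is precisely this last step. The naive scalar subordination $e^{-\|x\|_K^p}=\int_0^\infty e^{-\lambda\|x\|_K^2}\,dm_p(\lambda)$ only reduces matters to the exponent $p=2$, whose integrand $e^{-\lambda\|x\|_K^2}$ is a genuine Gaussian solely when $K$ is an ellipsoid; for a general body one must \emph{couple} the directional scalar subordinators into a single random covariance, which is what the $L_p$-representation of $\|\cdot\|_K^p$ and the point-process construction achieve, and where the restriction $p\le 2$ (equivalently, the existence of the $p/2$-stable scales) is essential. Alternatively, one may simply invoke the known fact that symmetric $p$-stable vectors are Gaussian mixtures for $p\in(0,2]$ and feed it into the first paragraph.
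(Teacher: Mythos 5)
Your proof is correct, and it arrives at the same structural fact the paper uses --- that $e^{-\|\cdot\|_K^p}$ is a positive mixture of centered Gaussians, so that $\hat\phi(t\sqrt{s})$ becomes a positive mixture of exponentials $e^{-\lambda s}$ and H\"older concludes --- but by a genuinely different route. The paper first approximates the isotropic measure by \emph{discrete} isotropic measures (using \cite{bartheapprox}), reducing to $\|x\|^p=\sum_j c_j|\langle x,u_j\rangle|^p$, then applies Bernstein's theorem to each factor $e^{-c_j|\langle x,u_j\rangle|^{p}}$ separately; multiplying the resulting one-dimensional subordinations produces exactly the coupled random covariance $A_a=\sum_j a_j\,u_j\otimes u_j$ of (\ref{eq:embGM}), and the Fourier transform is then computed explicitly. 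You instead identify $\phi$ with the characteristic function of a symmetric $p$-stable vector $Y$ with spectral measure $\nu$ and invoke (or rebuild via a LePage series) its sub-Gaussian representation $Y=_d\sqrt{M}\,G$. This buys a proof with no discretization or weak-convergence limit, and it exposes that isotropicity is not really needed for the lemma --- only that the support of $\nu$ spans $\mathbb{R}^d$, which is automatic for a quasi-norm --- so Lewis' position matters only for the normalization used later in Theorem \ref{eskethm}. The cost is an appeal to stable-process machinery (Campbell's formula, the almost sure invertibility of $M=\sum_j\Gamma_j^{-2/p}\,U_j\otimes U_j$, and the degenerate case $p=2$, where $\phi$ is outright Gaussian and the series representation does not apply), which you sketch rather than verify; the paper's Bernstein-plus-product argument is entirely self-contained. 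One small imprecision worth fixing: your claim that ``$\phi$ is a Gaussian mixture density'' is \emph{equivalent} to ``$Y$ is a Gaussian mixture'' conflates mixtures of normalized Gaussian densities, as in (\ref{density of GM}), with mixtures of Gaussian characteristic functions $e^{-2\pi^2\langle Mt,t\rangle}$; the two classes differ by determinant factors in the mixing weights. This is harmless here, because the H\"older step only requires \emph{some} positive mixing measure, but the equivalence as literally stated is not the one your argument uses.
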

\begin{proof}
From Section \ref{section prelims} we know that when $K$ is in Lewis' position there exists an isotropic measure $\mu$ in $S^{d-1}$ so that the following holds:
\[
e^{-\|x\|_K^p}=\exp\left( -\int_{S^{d-1}} |\langle\theta,x \rangle|^p d\mu(\theta) \right)
\]

So denote $\|\cdot\|:=\|\cdot\|_K$ and let $\mu$ be a Borel isotropic measure on the $S^{d-1}$. From \cite{bartheapprox}, for each $k\in \mathbb{N}$, there exist $M_k\in \mathbb{N}$, $c_{j,k}>0 $ and $u_{j,k}\in S^{d-1}$ such that the discrete measure 
\[
\mu_k= \sum_{j=1}^{M_k} c_{j,k}\delta_{u_{j,k}}
\]
is isotropic and converges weakly to $\mu$. 

In particular, if we denote $\|x\|_k^p=\int_{S^{d-1}} |\langle\theta,x \rangle|^p d\mu_k(\theta)$ and $\phi_k=\exp(-\|\cdot\|_k^p)$, we have that for each $y \in \mathbb{R}^d$, $\hat{\phi}_k(y)\to \hat{\phi}(y)$. Hence, in order to prove the desired property for $\hat{\phi}$ it suffices to prove it for $\hat{\phi}_k$.

From now on we fix $M\in \mathbb{N}$, vectors $u_1,\ldots,u_M$ on $S^{d-1}$ and $c_1,\ldots,c_M>0$ so that the discrete measure 
$ \sum_{j=1}^{M} c_{j}\delta_{u_{j}}$
is isotropic, and let $p\in (0,2]$. Then we define a quasi-norm on $\mathbb{R}^d$ by $\|x\|^p=\sum_{j=1}^M c_j|\langle x,u_j\rangle|^p$ and $\phi(x):=\exp(-\|x\|^p)$.

By Bernstein's theorem and since $p/2\in (0,1]$, for each $j\in[M]$, there exists a positive probability measure $\nu_j$ on $(0,+\infty)$, such that $\exp(-c_j|t_j|^{p/2})=\int_0^\infty \exp(-a_jt_j)d\nu_j(a_j).$ \par
Applying the above to $t_j=\langle x,u_j\rangle^2$ and taking the product over all $j\in [M]$, we arrive at the following convenient formula, where we denote $\nu = \nu_1\otimes\cdots\otimes\nu_M$ and $a=(a_1,...,a_M)$,
\begin{equation} \label{eq:embGM}
    \phi(x)=e^{-\|x\|^p}=\int_{(0,+\infty)^M}e^{-\sum_{j=1}^Ma_j\langle x,u_j\rangle^2}d\nu(a).
\end{equation}

Taking the Fourier transform yields 
\[
\hat{\phi}(y)=\int_{(0,+\infty)^M}\int_{\mathbb{R}^n} e^{-\sum_{j=1}^Ma_j\langle x,u_j\rangle^2-2\pi i\langle x,y\rangle}dxd\nu(a)
\]
If now, for each $a\in (0,+\infty)^M$, we denote by $A_a$ the symmetric and positive definite matrix $A_a:= \sum_{j\in[M]}a_ju_j\otimes u_j$ (which is of full rank because of isotropicity), we see that the inside integral is the Fourier transform of (a multiple of) a Gaussian density and so we can rewrite the Fourier transform of $\phi$ as 
\[
\hat{\phi}(y)=\int_{(0,\infty)^M} \frac{\pi^\frac{n}{2}}{\sqrt{\det(A_a)}} e^{- \pi^2 \langle A^{-1}_ay,y\rangle}d\nu(a).
\]
In particular using the above expression we see that
\[
\hat{\phi}(t\sqrt{s})=\int_{(0,+\infty)^M}\frac{\pi^\frac{n}{2}}{\sqrt{\det(A_a)}} \left(e^{- \pi^2 \langle A^{-1}_at,t\rangle}\right)^sd\nu(a)
\]
and thus applying Holder's inequality we get that it is indeed log-convex in $s$.

\end{proof}

\begin{proof}[Proof of Theorem \ref{eskethm}]

When $K$ is in Lewis' position then by Lemma \ref{lemmanorm} the function $\phi(x):=\exp(-\|x\|_K^p)$  satisfies the log-convexity assumption of Proposition \ref{sectionlogsth}. From (\ref{sections of cb}), this yields that the function
\[
(a_1,\ldots,a_n)\mapsto |B_p^n(K)\cap H_{\sqrt{a}}|
\]
is log-convex on $\Delta^{n-1}$, and proves the theorem with the extra assumption that $K$ is in Lewis' position.

For general $K$, by \cite[Lemma 2.1]{eskenazis}, we have that for any $T\in GL_n$ and $a\in \Delta^{n-1}$,
\[
|B_p^n(K)\cap H_{\sqrt{a}}|=\det(T)^{1-n}|B_p^n(K)\cap H_{\sqrt{a}}|.
\]
Thus applying the suitable invertible linear $T$ to bring the body in Lewis' position proves the result for general $K$.
\end{proof}

Because of the symmetry (that is invariance under permutations) of the section function with respect to the vector $a$, one gets (and improves upon) the Schur monotonicty and the extremals as well that were proven in \cite{eskenazis}.\par 
\begin{remark} \label{remark laplace is section}
   In the original proof the idea was to prove first a Schur monotonicity of the Laplace transform of Gaussian random vectors (\cite[Theorem 2.3]{eskenazis}) in the spirit of \cite{BGMN} and \cite{ENT}, that also allowed to get estimates for the mean width of projections.  In our terminology Theorem \ref{eskethm} followed from the following ``more general" statement:

   If $G_a$ is a standard Gaussian random vector on $H_{\sqrt{a}}$, then for each $\lambda>0$ the function
   \begin{equation} \label{laplace comp eske}
       \Delta^{n-1} \ni (a_1,\ldots,a_n) \mapsto \mathbb{E}\exp\left( -\lambda \|G_a\|_{l_p^n(K)\cap H_{\sqrt{a}} }\right)
   \end{equation}
    is Schur convex.
    
We would like to point out here that an argument as above could give this statement as well, which shows that in fact the above Schur monotonicity, i.e. of the function (\ref{laplace comp eske}), is as general as that of Theorem \ref{eskethm}.

Indeed, by \cite[Lemma 10]{BGMN} and explicitly \cite[Lemma 2.5]{eskenazis} we have that 
\begin{equation} \label{laplace is prod}
   \mathbb{E}\exp\left( -\lambda \|G_a\|_{l_p^n(K)\cap H_{\sqrt{a}} }\right)= \lim_{\varepsilon\to0^+} \frac{c}{\varepsilon^{d}} \mu_p^n(H_{\sqrt{a}}(\varepsilon)),
\end{equation}
for some constant $c$, where $\mu_p(dx):= \exp\left( -\lambda\|x\|_K^p-\frac{1}{2}|x|^2 \right)dx$. Combining this now with (\ref{sectionshigh}), it is indeed the block section of the product measure $\mu_p^n$. Now it is not hard to see, arguing as in Lemma \ref{lemmanorm} (see also in \cite[Lemma 2.6]{eskenazis}), that its density satisfies the properties we want -that is the Fourier transform of the density satisfies the log-convexity assumption-, thus getting the above statement even with log-convexity, instead of Schur convexity.
    
   Even though it seems that the present approach simplifies the argument, providing a unified approach, it should be clear that the analysis of \cite{eskenazis}, passing to Lewis' position and approximating with discrete isotropic measures, was very important even now.
\end{remark}

\subsection{The subspaces of $L_p,$ $ p\in (0,2]$ - Moment comparison} Returning now to the comparison of moments, by Lemma \ref{lemmaENT} we have the Schur convexity in $a=(a_1,\ldots,a_n)$ of $\mathbb{E}\|\sqrt{a_1}X_1+\cdots+\sqrt{a_n}X_n\|^p_2$, whenever $X_1,\ldots,X_n$ are independent random (vector) Gaussian mixtures. \par
In the setting of subspaces of $L_p$, we know that if a quasi-normed space $(\mathbb{R}^d,\|\cdot\|)$ embeds isometrically into $L_p$ for $p\in (0,2]$ we have that for each $\lambda>0$, the continuous function 
\[
h(x)=e^{-\lambda\|x\|^p}
\]
is positive definite. Thus using Theorem \ref{centralthm} we get the following (the proof is omitted as it is identical to that of Corollary \ref{momentcorollary}):

\begin{corollary} \label{highmom}
    Let $q>0$ and $X_1,\ldots,X_n$ be independent identically distributed random vectors according to a probability measure $\mu(dx)=\psi(x)dx$ with $\psi$ even with positive integrable Fourier transform and such that for each $t\in\mathbb{R}^d$, $[0,+\infty)\ni s\mapsto\Hat{\psi}(s^\frac{1}{q}t)$ is log-convex for $q>0$. Let also $(\mathbb{R}^d,\|\cdot\|)$ be a quasi-normed space that embeds isometrically into $L_p$ for $p\in (0,2]$. Then the function
    
    \begin{equation} \label{boosted high dim}
(0,+\infty)^n \ni a \mapsto \mathbb{E} e^{-\lambda\|\sqrt[q]{a_1}X_1+\ldots+ \sqrt[q]{a_1}X_n\|^p}
\end{equation}
    is log-convex.
        
    Moreover, if $\mathbb{E} \|X_1\|^p<+\infty$ , we have that 
    \[
    [0,+\infty)^d\ni(a_1,\ldots,a_d)\mapsto \mathbb{E}\|a_1^{\frac{1}{q}}X_1+\cdots+a_n^{\frac{1}{q}}X_n\|^p
    \]
    is Schur concave.
    In the case where $\Hat{\psi}((\cdot)^\frac{1}{q}t)$ is log-concave then in the above statements we have that the functions are Schur concave and Schur convex respectively.
\end{corollary}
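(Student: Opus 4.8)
The plan is to follow verbatim the scheme of Corollary \ref{momentcorollary}, with the absolute value replaced by the quasi-norm $\|\cdot\|$ and the requisite positive definite functions supplied by the $L_p$-embedding. First I would record that for every $\lambda>0$ the function $h_\lambda(x)=e^{-\lambda\|x\|^p}$ is continuous and positive definite on $\mathbb{R}^d$. Indeed, $\lambda^{1/p}\|\cdot\|$ is again the quasi-norm of a space embedding isometrically into $L_p$ (for $p\in(0,2]$), so by the characterization recalled in Section \ref{section prelims}, $\exp(-\lambda\|x\|^p)=\exp(-\|\lambda^{1/p}x\|^p)$ is positive definite.

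Next I would feed each $h_\lambda$ into Theorem \ref{centralthm}, applied to the probability measure $\mu(dx)=\psi(x)dx$. Under the hypothesis that $s\mapsto\hat\psi(s^{1/q}t)$ is log-convex for each $t$, part (1) of that theorem immediately gives that
\[
(0,+\infty)^n\ni a\longmapsto H_\lambda(a):=\mathbb{E}\,e^{-\lambda\|a_1^{1/q}X_1+\cdots+a_n^{1/q}X_n\|^p}
\]
is log-convex, which is exactly the assertion about (\ref{boosted high dim}); under the log-concavity hypothesis, part (2) gives instead that $H_\lambda$ is Schur concave. This settles the first functional in both cases.

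To descend to the $p$-th moment I would use the $\lambda\to0^+$ differentiation trick of Corollary \ref{momentcorollary}. In the log-convex case, log-convexity yields convexity of $H_\lambda$, so $H_\lambda(ta+(1-t)b)\le tH_\lambda(a)+(1-t)H_\lambda(b)$ for all $\lambda\ge0$ and $t\in(0,1)$; the two sides coincide (and equal $1$) at $\lambda=0$, so differentiating in $\lambda$ at $0^+$ reverses the inequality and yields the concavity of $M(a):=\mathbb{E}\|a_1^{1/q}X_1+\cdots+a_n^{1/q}X_n\|^p$, which, being symmetric under permutations, is therefore Schur concave. In the log-concave case I would start instead from $H_\lambda(a)\ge H_\lambda(b)$ for $a\prec b$ and differentiate to obtain $M(a)\le M(b)$, i.e.\ Schur convexity of the same moment.

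The only step beyond bookkeeping is justifying the differentiation under the expectation, namely that the right derivative of $\lambda\mapsto H_\lambda(c)$ at $\lambda=0$ equals $-\mathbb{E}\|c_1^{1/q}X_1+\cdots+c_n^{1/q}X_n\|^p$. This is precisely where the assumption $\mathbb{E}\|X_1\|^p<\infty$ is used: together with the subadditivity (up to a constant) of $\|\cdot\|^p$ inherited from the $L_p$ structure and the independence of the $X_i$, it forces $\mathbb{E}\|\sum_i c_i^{1/q}X_i\|^p<\infty$; then the elementary inequality $0\le(1-e^{-\lambda s})/\lambda\le s$ for $s\ge0$ furnishes an integrable majorant, so dominated convergence legitimizes passing the derivative inside the expectation. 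I expect this interchange, rather than the two applications of Theorem \ref{centralthm}, to be the main obstacle.
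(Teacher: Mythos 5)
Your proposal is correct and follows exactly the route the paper intends: the paper omits the proof of Corollary \ref{highmom} precisely because it is the argument of Corollary \ref{momentcorollary} with $h_\lambda(x)=e^{-\lambda\|x\|^p}$ (positive definite by the $L_p$-embedding) fed into Theorem \ref{centralthm}, followed by the $\lambda\to0^+$ differentiation. Your extra care about the domination argument for differentiating under the expectation is a welcome detail that the paper leaves implicit.
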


\begin{remark}
    As was noted in the previous section, for $q=2$ the Gaussian mixture vectors satisfy the condition of log-convexity, while for any intermediate value $0<q\leq2$ the multivariate symmetric $p$-stable distributions can give examples for both conditions depending on the ratio $\frac{p}{q}$.
 
Also let us stress once more that the Schur monotonicity of the function (\ref{boosted high dim}) is a formally stronger statement than the Schur monotonicity of the moments, for that range of $p$. In particular, this result allows to get more general comparisons than in Lemma \ref{lemmaENT} for random vectors that satisfy the condition (eg Gaussian mixtures) and that particular range of $p$.
  
 More examples of comparison with different norms and different range of exponents, that include the quasi-norms considered here, will be presented in the next subsection, where we will have to put some extra assumption on the measure.

 The analogous of the third statement of Corollary \ref{momentcorollary} holds here as well, but besides the extra assumptions on the density it is included in Theorem \ref{momentsweak} (since as it is mentioned in Section \ref{section prelims} any finite dimensional subspace of $L_p$, for $p \in (0, 2]$ embeds in $L_{-l}$ for every $ l \in (0,d)$).
\end{remark}

\subsection{Comparison using embeddings in $L_{-l}$}
In order to apply our previous results in this case we should be careful, since we no longer deal with integrable functions and the Fourier transform is only defined in the weak sense as described in Section \ref{section prelims}. So we will already start with a measure satisfying some stronger assumption and take the convolution of it with a Gaussian kernel to make it belong in the Schwartz space and then try to proceed as in Section \ref{Section main} with a limiting argument. Below we restrict ourselves to the case the density satisfies the log-convexity assumption on the Fourier transform, mainly due to the absence of non-trivial (i.e. non-product) examples. The analogous statements for the log-concave case should be clear.

We have the following result, of which a particular case is Theorem \ref{intersectionintro} (see Section \ref{section prelims} for the relation of intersection bodies and subspaces of $L_{-l}$):

\begin{theorem} \label{momentsweak}
    Let $X_1,\ldots,X_n$ be independent random vectors in $\mathbb{R}^d$ distributed according to the probability measure $\mu(dx)=\phi(x)dx$. Suppose that $\phi$ is even and has positive and integrable Fourier transform, such that for each $t\in \mathbb{R}^d$ the function $ r\mapsto\Hat{\phi}(\sqrt{r}t)$ is log-convex on $[0,+\infty)$. Assume moreover that for each $N>0$ $\mathbb{E}|X_1|^N<+\infty$. Let also $l \in (0,d)$ and consider a quasi-normed space $(\mathbb{R}^d,\|\cdot\|)$ that embeds in $L_{-l}$ and also $\mathbb{E} \|X_1\|^{-l}<+\infty$.
    
    Then the function
\[
 (a_1,\ldots,a_n)\mapsto \mathbb{E} \|\sqrt{a_1}X_1+\cdots+\sqrt{a_n}X_n\|^{-l}
\]
is log-convex on $[0,+\infty)^n$.
\end{theorem}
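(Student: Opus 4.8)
The plan is to reproduce the log-convexity half of the proof of Theorem~\ref{centralthm}, but with the positive definite \emph{distribution} $\|\cdot\|^{-l}$ playing the role of the positive definite function $h$; the finite Bochner measure is then replaced by the tempered measure $t^{l-1}\,dt\,d\sigma_l(\theta)$ furnished by the embedding of $(\mathbb{R}^d,\|\cdot\|)$ in $L_{-l}$ (Definition~\ref{definition embed negative}). Write $S_a=\sqrt{a_1}X_1+\cdots+\sqrt{a_n}X_n$ and let $f_a$ be its density; by independence and the scaling $Y=\sqrt{a}\,X\Rightarrow\hat f_Y=\hat\phi(\sqrt{a}\,\cdot)$ we have $\hat f_a(\xi)=\prod_{i=1}^n\hat\phi(\sqrt{a_i}\,\xi)$. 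Formally,
\[
H(a)=\mathbb{E}\,\|S_a\|^{-l}=\int_{\mathbb{R}^d}\|x\|^{-l}f_a(x)\,dx
=\int_0^\infty\!\!\int_{S^{d-1}}t^{l-1}\prod_{i=1}^n\hat\phi(\sqrt{a_i}\,t\theta)\,d\sigma_l(\theta)\,dt,
\]
the last equality being \eqref{eq for embeddings to -l} applied to $\psi=f_a$. Once this representation is available, log-convexity is immediate: for $a,b\in[0,\infty)^n$ and $\lambda\in(0,1)$ the hypothesis that $r\mapsto\hat\phi(\sqrt{r}\,t\theta)$ is log-convex gives, coordinatewise, $\hat\phi(\sqrt{\lambda a_i+(1-\lambda)b_i}\,t\theta)\le\hat\phi(\sqrt{a_i}\,t\theta)^{\lambda}\hat\phi(\sqrt{b_i}\,t\theta)^{1-\lambda}$; since $\hat\phi\ge0$, $t^{l-1}\ge0$ and $\sigma_l\ge0$, multiplying over $i$ and applying Hölder against the positive measure $t^{l-1}\,dt\,d\sigma_l(\theta)$ yields $H(\lambda a+(1-\lambda)b)\le H(a)^{\lambda}H(b)^{1-\lambda}$, exactly as in Proposition~\ref{sectionlogsth} and Theorem~\ref{centralthm}.

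The difficulty is that \eqref{eq for embeddings to -l} is only valid for even \emph{Schwartz} test functions, whereas $f_a$ is not Schwartz, so I would insert a Gaussian regularization. Fix a Gaussian density $\gamma_\varepsilon$ on $\mathbb{R}^d$ of variance $\varepsilon$, with $\hat\gamma_\varepsilon(\xi)=e^{-c\varepsilon|\xi|^2}$ for some $c>0$, and set $\psi_{a,\varepsilon}:=f_a*\gamma_\varepsilon$, the density of $S_a+N_\varepsilon$ with $N_\varepsilon$ an independent centered Gaussian. The assumption $\mathbb{E}|X_1|^N<\infty$ for all $N$ makes $\hat\phi\in C^\infty$ with polynomially bounded derivatives, hence $\hat\psi_{a,\varepsilon}(\xi)=\prod_{i=1}^n\hat\phi(\sqrt{a_i}\,\xi)\,e^{-c\varepsilon|\xi|^2}$ is Schwartz (a $C^\infty$, polynomially bounded factor times a Gaussian), so $\psi_{a,\varepsilon}\in\mathcal{S}(\mathbb{R}^d)$; it is even because $\phi$ and $\gamma_\varepsilon$ are. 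Applying \eqref{eq for embeddings to -l} to $\psi_{a,\varepsilon}$ gives the honest identity
\[
H_\varepsilon(a):=\mathbb{E}\,\|S_a+N_\varepsilon\|^{-l}
=\int_0^\infty\!\!\int_{S^{d-1}}t^{l-1}\prod_{i=1}^n\hat\phi(\sqrt{a_i}\,t\theta)\,e^{-c\varepsilon t^2}\,d\sigma_l(\theta)\,dt,
\]
and the $a$-independent factor $e^{-c\varepsilon t^2}>0$ is harmless, so the Hölder argument above shows each $H_\varepsilon$ is log-convex on $[0,\infty)^n$.

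It then remains to let $\varepsilon\to0^+$ and check $H_\varepsilon(a)\to H(a)$, after which log-convexity passes to the pointwise limit (a pointwise limit of log-convex functions is log-convex). Here one uses that $l<d$ makes $\|\cdot\|^{-l}$ locally integrable and that $\|\psi_{a,\varepsilon}\|_\infty\le\|f_a\|_\infty$ uniformly in $\varepsilon$, so that $\int_{\{\|x\|\le\rho\}}\|x\|^{-l}\psi_{a,\varepsilon}(x)\,dx$ is small uniformly in $\varepsilon$ as $\rho\to0$, while away from the origin $\|\cdot\|^{-l}$ is bounded and $\psi_{a,\varepsilon}\to f_a$ in $L^1$; together these give $H_\varepsilon(a)\to H(a)=\mathbb{E}\|S_a\|^{-l}$, finite by the same local-integrability bound and consistent with the hypothesis $\mathbb{E}\|X_1\|^{-l}<\infty$. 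I expect this limiting step, namely controlling the origin singularity uniformly in the regularization and justifying that $\psi_{a,\varepsilon}$ is genuinely Schwartz from the moment hypothesis, to be the main technical obstacle; the algebraic core (coordinatewise log-convexity plus Hölder against the positive spectral measure) is the easy part.
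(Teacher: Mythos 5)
Your proposal is correct and follows essentially the same route as the paper: Gaussian regularization to obtain an even Schwartz test function, the defining identity \eqref{eq for embeddings to -l} to represent the functional against the spectral measure $t^{l-1}\,dt\,d\sigma_l(\theta)$, coordinatewise log-convexity of $r\mapsto\hat\phi(\sqrt{r}\,t\theta)$ plus H\"older, and a passage to the limit $\varepsilon\to0^+$. The only (immaterial) difference is that the paper convolves each marginal density $\phi$ with the heat kernel and works with the product measure $\mu_\varepsilon^n$, whereas you convolve the density of the full sum $S_a$ with a single Gaussian; both produce the same $a$-harmless Gaussian damping factor in the Fourier representation.
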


\begin{remark}
An example of such $\phi$ is given by $\phi(x)=\exp(-\|x\|_\ast^p)$ when $(\mathbb{R}^d,\|\cdot\|_\ast)$ embeds in $L_p$ for some $p\in (0,2]$.
\end{remark}

\begin{proof}

For $\varepsilon>0$ consider the heat kernel $g_\varepsilon(x)=\frac{1}{(\sqrt{2\pi \varepsilon})^d}\exp(-\frac{1}{\varepsilon}|x|^2)$, which belongs to $\mathcal{S}(\mathbb{R}^d)$, and the measure $\mu_\varepsilon$ with density $\phi_\varepsilon:=\phi\ast g_\varepsilon$. The following hold:
\begin{enumerate}[label=(\roman*)]
\item \label{convol belongs in Schwartz} By the assumption on the moments of $\mu$ we get that $\phi_\varepsilon\in \mathcal{S}(\mathbb{R}^d)$. Indeed, for $N\in \mathbb{N}$ and $\alpha=(\alpha_1,...,\alpha_d)\in \mathbb{Z}^d_+$, 
\[
(1+|x|)^N|\partial^\alpha \phi_\varepsilon(x)|\leq  (1+|x|)^N\int_{\mathbb{R}^d}|\partial^\alpha g_\varepsilon(x-y)|\phi(y)dy\leq \int_{\mathbb{R}^d}C_{\alpha,N,g_\varepsilon} \frac{(1+|x|)^N}{(1+|x-y|)^N}\phi(y)dy
\]
\[
\leq C_{\alpha,N,g_\varepsilon} \int_{\mathbb{R}^d} (1+|y|)^N\phi(y)dy <\infty
\]
while the differentiablity is obvious.
\item  Now one can show that $f_\varepsilon(t):=S_{\mu_\varepsilon^n}(\beta,t)=\int_{\langle\beta,x \rangle=t}\phi_\varepsilon(x_1)\cdots\phi_\varepsilon(x_d)\mathcal{H}^{nd-d}(dx)\in \mathcal{S}(\mathbb{R}^d)$, for each $\beta\in [0,+\infty)^n$. Indeed, according to (\ref{fourierofsec}), its Fourier transform is
\[
\Hat{f}_\varepsilon(s)=|\beta|^d \prod_{k=1}^n\hat{\phi}_\varepsilon(\beta_ks)
\]
and since the product of Schwartz functions belongs again in $\mathcal{S}(\mathbb{R}^d)$, we get that $\hat{f}_\varepsilon\in \mathcal{S}(\mathbb{R}^d)$, hence $f_\varepsilon\in \mathcal{S}(\mathbb{R}^d)$.

\item The measure $\mu_\varepsilon$ also has the property that for each $t\in \mathbb{R}^d$ the function $[0,+\infty)\ni r\mapsto\Hat{\phi}_\varepsilon(\sqrt{r}t)$ is log-convex. This is true since $\hat{\phi}_\varepsilon=\hat{\phi}\cdot\hat{g}_\varepsilon$ and both $\phi$ and $g_\varepsilon$ satisfy the log-convexity condition.
\end{enumerate}
Let $h(x)= \frac{1}{\|x\|^{l}}$. By the hypothesis, $h$ gives rise to a positive definite distribution. Applying the representation of Definition \ref{definition embed negative}, plugging in the Schwartz function $f_\varepsilon(t):=S_{\mu_\varepsilon^n}(\sqrt{a},t)$, we proceed below as in previous calculations (recall that $\sqrt{a}=(\sqrt{a_1},\ldots,\sqrt{a_n})$)
\[
\int_{\mathbb{R}^{nd}}h(\langle\sqrt{a},x \rangle)\mu_\varepsilon^n(dx)= \int_{\mathbb{R}^d}h(t)f_\varepsilon(t)dt = \int_0^\infty \int_{S^{d-1}}r^{l-1}\Hat{f}_\varepsilon(r\theta)d\sigma_l(\theta)dr
\]
\[
= \int_0^\infty \int_{S^{d-1}}r^{l-1}\prod_{k=1}^n\hat{\phi}_\varepsilon(\sqrt{a}_kr\theta)d\sigma_l(\theta)dr.
\]
So by Hölder's inequality, and if we denote by $X_i^\varepsilon$ the i.i.d. vectors distributed according to $\mu_\varepsilon$, we get that the function
\[
[0,+\infty)^n\ni (a_1,\ldots,a_n)\mapsto \mathbb{E} \|\sqrt{a_1}X_1^\varepsilon+\cdots+\sqrt{a_n}X_n^\varepsilon\|^{-l}
\]
is log-convex. Letting $\varepsilon\to 0^+$, and using e.g. \ref{convol belongs in Schwartz} and dominated convergence, yields the result for the measure $\mu$, assuming that $\mathbb{E}_\mu \|X\|^{-l}<+\infty$.

\end{proof}

Expanding now on Remark \ref{remark Khinchine}, if we have an ellipsoid $\mathcal{E}$, using the symmetry of the random vectors, for $a\in \Delta^{n-1}$ we see that
\[
\mathbb{E}\|\sqrt{a_1}X_1+\cdots+\sqrt{a_n}X_n\|_{\mathcal{E}}^2= \mathbb{E}\|X\|_{\mathcal{E}}^2.
\]
Given a star body $K\subset \mathbb{R}^d$, we will denote \[
\left(\mathbb{E}\|X\|_K^p\right)^\frac{1}{p}=\|X\|_{K,p}.
\]
Arguing as in Corollary \ref{corollary CLT}, with the help of Corollary \ref{highmom} and Theorem \ref{momentsweak}, we get the following sharp vector Khinchin inequality between $\|\cdot\|_{K,p}$ and $\|\cdot\|_{\mathcal{E},2}$ for certain choices of $(K,p)$:
   \begin{corollary} \label{khinchin}
        Let $X_1,\ldots,X_n$ be independent random vectors in $\mathbb{R}^d$ distributed according to the probability measure $\mu(dx)=\phi(x)dx$ as in Theorem \ref{momentsweak}. Let also $l \in (0,d)$ and consider a normed space $(\mathbb{R}^d,\|\cdot\|_K)$ that embeds in $L_{-l}$ and $Z$ a standard Gaussian random vector with $Cov(Z)=Cov(X)$, then for any $a\in \Delta^{n-1}$ and any ellipsoid $\mathcal{E}\subset \mathbb{R}^d$ we have
\[
\frac{\|X\|_{K,-l}}{\|X\|_{\mathcal{E},2}}\|\sqrt{a_1}X_1+\cdots+\sqrt{a_n}X_n\|_{\mathcal{E},2}\leq \|\sqrt{a_1}X_1+\cdots+\sqrt{a_n}X_n\|_{K,-l}\leq \frac{\|Z\|_{K,-l}}{\|X\|_{\mathcal{E},2}}\|\sqrt{a_1}X_1+\cdots+\sqrt{a_n}X_n\|_{\mathcal{E},2}.
\]
        Moreover, without the extra integrability assumptions of Theorem \ref{momentsweak}, if we assume that $(\mathbb{R}^d,\|\cdot\|_K)$ embeds in $L_{p}$, for $p\in (0,2)$, with the same notation for $Z$ we have that:
\[
\frac{\|X\|_{K,p}}{\|X\|_{\mathcal{E},2}}\|\sqrt{a_1}X_1+\cdots+\sqrt{a_n}X_n\|_{\mathcal{E},2}\leq \|\sqrt{a_1}X_1+\cdots+\sqrt{a_n}X_n\|_{K,p}\leq \frac{\|Z\|_{K,p}}{\|X\|_{\mathcal{E},2}}\|\sqrt{a_1}X_1+\cdots+\sqrt{a_n}X_n\|_{\mathcal{E},2}.
\]
        
   \end{corollary}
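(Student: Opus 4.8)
The plan is to first strip away the ellipsoidal normalization and reduce both displayed chains to a two-sided comparison of the relevant $\|\cdot\|_{K,\cdot}$ quantities for $S_a:=\sqrt{a_1}X_1+\cdots+\sqrt{a_n}X_n$, and then read that comparison off from the Schur monotonicity already established. Writing $\|y\|_{\mathcal{E}}^2=\langle A y,y\rangle$ for the positive definite matrix $A$ of the ellipsoid $\mathcal{E}$, and using that the $X_i$ are independent, centered (the density is even), and that $\sum_i a_i=1$ on $\Delta^{n-1}$, I would compute
\[
\mathbb{E}\|S_a\|_{\mathcal{E}}^2=\mathrm{tr}\big(A\,\mathrm{Cov}(S_a)\big)=\mathrm{tr}\Big(A\sum_{i=1}^n a_i\,\mathrm{Cov}(X)\Big)=\mathrm{tr}(A\,\mathrm{Cov}(X))=\mathbb{E}\|X\|_{\mathcal{E}}^2,
\]
which is exactly the identity recorded just before the statement. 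Hence $\|S_a\|_{\mathcal{E},2}=\|X\|_{\mathcal{E},2}$ is constant in $a\in\Delta^{n-1}$, so the first chain is equivalent to $\|X\|_{K,-l}\le\|S_a\|_{K,-l}\le\|Z\|_{K,-l}$ and the second to $\|X\|_{K,p}\le\|S_a\|_{K,p}\le\|Z\|_{K,p}$. Since $t\mapsto t^{-1/l}$ is decreasing while $t\mapsto t^{1/p}$ is increasing, I would rewrite these as
\[
\mathbb{E}\|X\|_K^{-l}\ \ge\ \mathbb{E}\|S_a\|_K^{-l}\ \ge\ \mathbb{E}\|Z\|_K^{-l},\qquad \mathbb{E}\|X\|_K^{p}\ \le\ \mathbb{E}\|S_a\|_K^{p}\ \le\ \mathbb{E}\|Z\|_K^{p}.
\]

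For the first chain, Theorem \ref{momentsweak} gives that $G(a):=\mathbb{E}\|S_a\|_K^{-l}$ is log-convex on $[0,+\infty)^n$, hence convex; being also invariant under permutations of the $a_i$ (the $X_i$ are i.i.d.), it is Schur convex. Using $(\tfrac1n,\dots,\tfrac1n)\prec a\prec(1,0,\dots,0)$ for $a\in\Delta^{n-1}$, I get at once $G(a)\le G(1,0,\dots,0)=\mathbb{E}\|X\|_K^{-l}$, the left inequality, together with $G(a)\ge G(\tfrac1n,\dots,\tfrac1n)=\mathbb{E}\big\|\tfrac{X_1+\cdots+X_n}{\sqrt n}\big\|_K^{-l}$. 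For the second chain I would argue identically, replacing Theorem \ref{momentsweak} by Corollary \ref{highmom} with $q=2$, which yields Schur concavity of $a\mapsto\mathbb{E}\|S_a\|_K^{p}$ for $p\in(0,2)$ and hence the reversed extremal comparisons.

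To identify the inner endpoint with the Gaussian I would run the central-limit argument of Corollary \ref{corollary CLT}, but in a form valid for fixed $n$: taking $m$ i.i.d. copies $S_a^{(1)},\dots,S_a^{(m)}$ of $S_a$ and viewing them through the $mn$ variables $X_i^{(k)}$ with weight vector $b=\tfrac1m(a,\dots,a)\in\Delta^{mn-1}$, the splitting majorization $b\prec(a_1,\dots,a_n,0,\dots,0)$ (each $a_i$ is averaged over $m$ slots, so $b$ is a doubly stochastic image of the concentrated vector) combined with Schur convexity of $G$ on $[0,+\infty)^{mn}$ yields
\[
\mathbb{E}\Big\|\tfrac{S_a^{(1)}+\cdots+S_a^{(m)}}{\sqrt m}\Big\|_K^{-l}\ \le\ \mathbb{E}\|S_a\|_K^{-l}.
\]
Since $\mathrm{Cov}(S_a)=\mathrm{Cov}(X)=\mathrm{Cov}(Z)$, the central limit theorem makes the normalized sum converge in distribution to $Z$ as $m\to\infty$, so passing to the limit gives $\mathbb{E}\|Z\|_K^{-l}\le\mathbb{E}\|S_a\|_K^{-l}$, closing the first chain; the same device handles the upper endpoint $\mathbb{E}\|S_a\|_K^p\le\mathbb{E}\|Z\|_K^p$ of the second.

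The main obstacle is precisely this last limit: I must justify that $\mathbb{E}\|\cdot\|_K^{-l}$ (resp.\ $\mathbb{E}\|\cdot\|_K^{p}$) passes to the limit along the central limit theorem, which does not follow from weak convergence alone because $h(x)=\|x\|_K^{-l}$ is unbounded near the origin and $h(x)=\|x\|_K^{p}$ is unbounded at infinity, unlike the bounded continuous $h$ of Corollary \ref{corollary CLT}. For the positive moment $p\in(0,2)$ this is the routine convergence of $p$-th moments under the central limit theorem, secured by uniform integrability from the finiteness of a higher moment. For the negative moment the difficulty is genuine: using $\|\cdot\|_K\asymp|\cdot|$ and $l<d$ so that $|\cdot|^{-l}$ is locally integrable, I would deduce uniform integrability of the singularity at $0$ from a uniform bound on the densities of the normalized sums near the origin (available since the regularizations $\phi_\varepsilon$ of Theorem \ref{momentsweak} are Schwartz and all moments are finite); alternatively, and perhaps more cleanly, I would pass the limit directly inside the representation $\mathbb{E}\|S_a\|_K^{-l}=\int_0^\infty\int_{S^{d-1}}r^{l-1}\prod_k\hat{\phi}(\sqrt{a_k}\,r\theta)\,d\sigma_l(\theta)\,dr$ of Theorem \ref{momentsweak}, exploiting that the characteristic functions of the normalized sums converge pointwise to the Gaussian one, together with a dominated-convergence bound (integrability of $r^{l-1}$ at $0$ and decay of the product at infinity). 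This integrability bookkeeping is where the real work lies; the covariance identity and the majorization steps are then immediate.
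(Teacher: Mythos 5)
Your proof is correct and follows essentially the same route the paper takes (and only sketches): the covariance identity reduces both chains to endpoint comparisons, the Schur monotonicity supplied by Theorem \ref{momentsweak} and Corollary \ref{highmom} gives the $X$-endpoints, and the padding/splitting majorization plus the central limit theorem, as in Corollary \ref{corollary CLT}, gives the Gaussian endpoints. Your explicit treatment of the uniform-integrability/dominated-convergence issue when passing $\mathbb{E}\|\cdot\|_K^{-l}$ and $\mathbb{E}\|\cdot\|_K^{p}$ to the CLT limit addresses a point the paper leaves implicit (its Corollary \ref{corollary CLT} is proved only for bounded continuous positive definite $h$), and both of your proposed fixes are sound.
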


  \begin{remark}   Arguing as in the proof of Theorem \ref{momentsweak} one can see that, for the class of measures described above, similar log-convexity or Schur concavity results can hold for distributions $h$ which are positive definite, thus making it possible to further generalize (in the sense of the function $h$) in some cases Theorem \ref{centralthm} using a generalization Bochner's Theorem proved by Schwartz, which states that every positive definite distribution $h:\mathbb{R}^d\to \mathbb{R}$ (or even $T\in \mathcal{S}'$) is the Fourier transform of a tempered measure on $\mathbb{R}^d$.
\end{remark}

Using now the representation of Section \ref{Section High dim}.1 and the comparison of Theorem \ref{momentsweak}, arguing similarly to \cite{BGMN} and \cite{ENT} we have the following comparison of moments for random vectors distributed on the ball $B_p^n(K)$. Below $Y\in \mathbb{R}^{nd}$ and so for $a\in [0,\infty)^n$
\[
\langle\sqrt{a},Y\rangle := \sqrt{a_1}\ Y_1+\cdots+\sqrt{a_n}\ Y_n\in \mathbb{R}^d.
\]

\begin{corollary}\label{corollary moments random vector}
   Let $K\subset \mathbb{R}^d$ be a star body in Lewis' position for $p\in (0,2]$  and let $l \in (0,d)$ and a quasi-normed space $(\mathbb{R}^d,\|\cdot\|)$ that embeds in $L_{-l}$. Then, if $Y$ is a random vector uniformly distributed on $B_p^n(K)$, the function 
   \[
[0,+\infty)^n\ni (a_1,\ldots,a_n)\mapsto \mathbb{E} \|\langle\sqrt{a},Y\rangle\|^{-l}
\]
is log-convex.
\end{corollary}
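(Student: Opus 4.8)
The plan is to combine the probabilistic representation of the uniform measure on $B_p^n(K)$ from Proposition \ref{representationprop} with the log-convexity supplied by Theorem \ref{momentsweak}, reducing the statement about $Y$ to the corresponding statement for an i.i.d. sample. First I would invoke Proposition \ref{representationprop}: taking $X_1,\ldots,X_n$ i.i.d. on $\mathbb{R}^d$ with density proportional to $e^{-\|x\|_K^p}$, writing $X=(X_1,\ldots,X_n)$, and letting $Z$ be an independent $\mathrm{Exp}(1)$ variable, one has $Y =_d X/(\|X\|_{l_p^n(K)}^p+Z)^{1/p}$. Since $\langle\sqrt a,Y\rangle = \langle\sqrt a,X\rangle/(\|X\|_{l_p^n(K)}^p+Z)^{1/p}$ and $\|\cdot\|$ is $(-l)$-homogeneous, this yields the distributional identity $\|\langle\sqrt a,Y\rangle\|^{-l} = (\|X\|_{l_p^n(K)}^p+Z)^{l/p}\,\|\langle\sqrt a,X\rangle\|^{-l}$.

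The key step is to decouple the scalar prefactor from the $a$-dependent direction. I would set $R=\|X\|_{l_p^n(K)}$ and $\Theta=X/R\in S_p^n(K)$; by Proposition \ref{propindep} the radius $R$ and the direction $\Theta$ are independent, and $Z$ is independent of both. Using $\langle\sqrt a,X\rangle = R\,\langle\sqrt a,\Theta\rangle$ one rewrites the prefactor as $(R^p+Z)^{l/p}R^{-l}$, which depends only on $(R,Z)$, while $\|\langle\sqrt a,\Theta\rangle\|^{-l}$ depends only on $\Theta$. Taking expectations and factorizing gives
\[
\mathbb{E}\|\langle\sqrt a,Y\rangle\|^{-l} = \mathbb{E}\!\left[(R^p+Z)^{l/p}R^{-l}\right]\,\mathbb{E}\|\langle\sqrt a,\Theta\rangle\|^{-l},
\]
and the same decomposition applied to $X$ gives $\mathbb{E}\|\langle\sqrt a,X\rangle\|^{-l} = \mathbb{E}[R^{-l}]\,\mathbb{E}\|\langle\sqrt a,\Theta\rangle\|^{-l}$. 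Dividing, $\mathbb{E}\|\langle\sqrt a,Y\rangle\|^{-l}$ equals a positive constant times $\mathbb{E}\|\langle\sqrt a,X\rangle\|^{-l}$, and since multiplication by a positive constant preserves log-convexity, it suffices to handle the $X$-functional.

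It then remains to apply Theorem \ref{momentsweak} to the i.i.d. family $X_1,\ldots,X_n$ with density $\phi\propto e^{-\|x\|_K^p}$, so I would check its hypotheses. The density $\phi$ is even because $K$ is symmetric; $\hat\phi\geq 0$ because $K$ in Lewis' position for $p\in(0,2]$ makes $e^{-\|x\|_K^p}$ positive definite, and $\hat\phi$ is integrable since $\phi\in L^1$ is continuous at the origin with nonnegative transform; the log-convexity of $r\mapsto\hat\phi(\sqrt r\,t)$ is exactly Lemma \ref{lemmanorm}; all moments $\mathbb{E}|X_1|^N$ are finite because $\|\cdot\|_K$ is equivalent to $|\cdot|$ and $e^{-c|x|^p}$ decays rapidly; and $\mathbb{E}\|X_1\|^{-l}<\infty$ because $l<d$ makes $\|x\|^{-l}\sim|x|^{-l}$ locally integrable against a bounded density. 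Theorem \ref{momentsweak} then gives the log-convexity of $\mathbb{E}\|\langle\sqrt a,X\rangle\|^{-l}$, which completes the argument.

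The main obstacle is the decoupling step: a priori the normalizing scalar $(\|X\|_{l_p^n(K)}^p+Z)^{l/p}$ is correlated with the direction of $\langle\sqrt a,X\rangle$, and it is the independence in Proposition \ref{propindep} (of Schechtman--Zinn type) that makes the correlation factorize, collapsing the $Y$-functional into a constant multiple of the $X$-functional. A secondary point is confirming that the constants $\mathbb{E}[R^{-l}]$ and $\mathbb{E}[(R^p+Z)^{l/p}R^{-l}]$ are finite and positive; this follows from $R^p\sim\mathrm{Gamma}(nd/p,1)$ together with $l<d\leq nd$, upon writing $(R^p+Z)^{l/p}R^{-l}=(1+Z/R^p)^{l/p}$ and using the independence of $Z$ and $R^p$.
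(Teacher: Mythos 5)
Your proposal is correct and follows essentially the same route as the paper: represent $Y$ via Proposition \ref{representationprop}, use the Schechtman--Zinn-type independence of radius and direction from Proposition \ref{propindep} to factor the normalizing scalar out of the expectation and reduce to a positive constant times $\mathbb{E}\|\langle\sqrt a,X\rangle\|^{-l}$, then apply Theorem \ref{momentsweak}. Your explicit verification of the hypotheses of Theorem \ref{momentsweak} for $\phi\propto e^{-\|x\|_K^p}$ and of the finiteness of the prefactor is slightly more detailed than what the paper records, but the argument is the same.
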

\begin{proof}
    To see this we re-write the expression $\mathbb{E} \|\langle\sqrt{a},Y\rangle\|^{-l}=\mathbb{E} \|\sqrt{a_1}Y_1+\cdots+\sqrt{a_n}Y_n\|^{-l}$ using the standard argument already from \cite{BGMN} and \cite{ENT}.

According to Proposition \ref{representationprop}, $Y$ has the same law as

\[
\frac{(X_1,\ldots,X_n)}{ \left( \|X\|_{l_p^n(K)}^p+Z \right)^{\frac{1}{p}}},
\]
where $X_1,\ldots,X_n$ are i.i.d. random vectors on $\mathbb{R}^d$ with density $c_{K,n,p}exp(-\|x\|^p_K)$ and $Z$ is a random variable independent of the rest with density $e^{-t}\mathbbm{1}_{(0,+\infty)}(t)$ .

    By the independence of $X/\|X\|_{l_p^n(K)}$ and $\|X\|_{l_p^n(K)}$, from Proposition \ref{propindep}, we may write
    \[
    \mathbb{E} \|\sqrt{a_1}Y_1+\cdots+\sqrt{a_n}Y_n\|^{-l}=\mathbb{E} \left\|\sqrt{a_1}\frac{X_1}{ \left( \|X\|_{l_p^n(K)}^p+Z \right)^{\frac{1}{p}}}+\cdots+\sqrt{a_n}\frac{X_n}{ \left( \|X\|_{l_p^n(K)}^p+Z \right)^{\frac{1}{p}}}\right\|^{-l}=
    \]
    \[
    =\mathbb{E}\frac{\|X\|_{l_p^n(K)}^{-l}}{ \left( \|X\|_{l_p^n(K)}^p+Z \right)^{-\frac{l}{p}}}\mathbb{E}\left\|\sqrt{a_1}\frac{X_1}{ \|X\|_{l_p^n(K)}}+\cdots+\sqrt{a_n}\frac{X_n}{ \|X\|_{l_p^n(K)}}\right\|^{-l}=
    \]
\[
=\mathbb{E}\frac{\|X\|_{l_p^n(K)}^{-l}}{ \left( \|X\|_{l_p^n(K)}^p+Z \right)^{-\frac{l}{p}}} \frac{1}{\mathbb{E}\|X\|_{l_p^n(K)}^{-l}}\mathbb{E}\left\|\sqrt{a_1}X_1+\cdots+\sqrt{a_n}X_n\right\|^{-l}.
\]
    The log-convexity follows from the previous theorem.
\end{proof}

\section{Final Remarks} \label{section remarks}

\subsection{Sections of higher co-dimension} In the same spirit as in Section 2, one can write a formula (which we present hoping that it will be of some use), using the Fourier approach, for subspaces of higher co-dimension, which geometrically corresponds to higher co-dimensional section of convex bodies. We stick to the case of product measures $\mu^n$, where $\mu$ is a (probability say) measure on $\mathbb{R}$ having density $\phi$, for simplicity.

Let $k\geq 2$ and $u_1,\ldots,u_k$ be unit vectors, orthogonal to each other. We set $\mathbf{u}:=(u_1,\ldots,u_k)$ and define $F_\mathbf{u}:\mathbb{R}^n\to \mathbb{R}^k$ via 
\[
x\mapsto \left(\langle u_1,x\rangle,\ldots,\langle u_k,x\rangle  \right)
\]
Then, let $h:\mathbb{R}^k\to \mathbb{R}$ and consider the functional $H:\mathcal{U}\subset\left(S^{n-1}\right)^k\to \mathbb{R}$ defined as

\[
H(\mathbf{u})=\int_{\mathbb{R}^n}h(F_\mathbf{u}(x))\mu^n(dx),
\]
where $\mathcal{U}:= \{ \mathbf{u}=(u_1,\ldots,u_k)\in \left(S^{n-1}\right)^k | \langle u_i,u_j\rangle=\delta_{ij} \}$.

Using the co-area formula we re-write it as 
\[
 H(\mathbf{u})=\int_{\mathbb{R}^k}h(t_1,\ldots,t_k)\int_{F_\mathbf{u}^{-1}(t_1,\ldots,t_k)}\phi(x_1)\cdots\phi(x_n)\mathcal{H}^{n-k}(dx)dt_1\cdots dt_k.
\]
In general, for a measure $\nu(dx)=\psi(x)dx$ on $\mathbb{R}^n$, we may define the auxiliary "$k$ co-dimensional section function" in analogy to (\ref{sectionshigh}) as, 
\begin{equation}
S_{\nu,k}\left((u_1,\ldots,u_k),(t_1,\ldots,t_k)\right):= \int_{F_\mathbf{u}^{-1}(t_1,\ldots,t_k)}\psi(x)\mathcal{H}^{n-k}(dx)
\end{equation}
defined on $\mathcal{U}\times \mathbb{R}^k$. Taking its Fourier transform we see that
\begin{equation}\label{highfourier1}
\widehat{S_{\nu,k}}(s_1,\ldots,s_k)=\int_{\mathbb{R}^n}\psi(x)e^{-2\pi i\sum^k_{l=1}\langle u_l,x\rangle s_l   }dx=\Hat{\psi}(\langle u^1,s\rangle,\ldots,\langle u^n,s\rangle  ).
\end{equation}
Since in our case $\psi(x)=\prod^n_{m=1} \phi(x_m)$ we then get
\begin{equation} \label{highfourier2}
\widehat{S_{\mu^n,k}}(s_1,\ldots,s_k)=\prod_{m=1}^n\Hat{\phi}(\sum_{l=1}^k u_l^ms_l )= \prod_{m=1}^n\Hat{\phi}(\langle u^m,s\rangle ) 
\end{equation}
where, by $u_l^m$ we denote the $m$-th coordinate of the vector $u_l$ and by $u^m=(u_1^m,\ldots,u_k^m)$, the vector of the $m$-th coordinates.

So, we end up with the "convenient" 
 \[
 H(\mathbf{u})
=\int_{\mathbb{R}^k}\prod_{m=1}^n\Hat{\phi}(\langle u^m,s\rangle ) \Hat{h}(-s)ds.
\]

\subsection{$q$-stable mixtures and $q$-mixtures} \label{stablemixtures}
We introduce now a class of random variables, which is a subclass of Gaussian mixtures, and which enables us to prove moment comparison in the sense of Corollary \ref{momentcorollary}, arguing though as in Lemma \ref{lemmaENT}, when the weights $\sqrt{a_i}$ are replaced by $a_i^\frac{1}{q}$. They correspond to these type of weights as Gaussian mixtures do for $\sqrt{a}$. Since the proof of the comparison is of different spirit compared to the previous proofs it is put separately here.

Fix some $q\in(0,2)$. We will say that a random variable $X$ is $q$-stable mixture if $X=_dYZ_q$ for independent random variables $Y$ and $Z_q$, with $Y$ being a positive random variable and $Z_q$ a symmetric $q$-stable random variable. Such random variable is of course a Gaussian mixture as well, since $Z_q$ is in turn a product of two independent random variables (a positive and a Gaussian). Since $q$ is now fixed we denote below $Z_q=Z$.

Notice that the Fourier transform of the density of a stable mixture $X$ equals, if we denote its density by $\phi$,
\[
\hat{\phi}(t)= \mathbb{E}e^{-2\pi itYZ}=\mathbb{E}_Y\mathbb{E}_Ze^{-2\pi itYZ}=\mathbb{E}_Ye^{-c|t|^qY^q}.
\]
In particular, by Hölder, the function $(0,\infty)\ni t\mapsto \hat{\phi}(t^\frac{1}{q})$ is log-convex. Hence, one sees by taking a standard Gaussian that the class of $q$-stable mixtures is a strict subset of the Gaussian mixtures.

Let now $X_1,\ldots,X_n$ be i.i.d. $q$-stable mixtures. In particular suppose $X_i=_dY_iZ_i$, where all the random variables involved are independent, $Y_i$ are i.i.d. and almost surely positive and $Z_i$ are i.i.d. $q$-stable. Let also $a_1,\ldots,a_n\geq 0$. We know by independence the following
\[
a_1^{\frac{1}{q}}Y_1Z_1+\cdots+a_n^\frac{1}{q}Y_nZ_n=_d\left(a_1Y_1^q+\cdots+a_nY_n^q\right)^\frac{1}{q}Z
\]
where $Z$ is a $q$-stable random variable independent of the rest.

So for $p\in (0,q)$ we can write the following
\[
\mathbb{E}|a_1^{\frac{1}{q}}X_1+\cdots+a_n^\frac{1}{q}X_n|^p=\mathbb{E}|\left(a_1Y_1^q+\cdots+a_nY_n^q\right)^\frac{1}{q}Z|^p =\mathbb{E}|Z|^p\mathbb{E}|a_1Y_1^q+\cdots+a_nY_n^q|^\frac{p}{q}.
\]

Now as for Lemma \ref{lemmaENT}, we make use of Theorem \ref{exchangable theorem}. In our case the exchangable random variables are the $Y_i^q$, while considering the function $f_1(t)=t^\frac{p}{q}$ we get that  the function
    \[
    [0,\infty)^n \ni (a_1,\ldots,a_n)\mapsto \mathbb{E}|a_1^{\frac{1}{q}}X_1+\cdots+a_n^\frac{1}{q}X_n|^p
    \]
    is Schur concave for $0<p< q$, and in general one has the following, considering the exchangable random variables $(Y_iZ)^q$ (compare also with Theorem \ref{centralthm}):

\begin{corollary}
    Let $q\in (0,2)$ and $X_1,\ldots,X_n$ be i.i.d. $q$-stable mixtures. Let also $h:\mathbb{R}\to \mathbb{R}$ be an even measurable function and suppose that $\mathbb{E}|h(X_1)|,\mathbb{E}|h\left(X_1+\cdots+X_n\right)|<\infty$.  Then if the function $(0,\infty)\ni t\mapsto h(t^{\frac{1}{q}})$ is convex (resp. concave), it implies that the (symmetric) function
    \[
    [0,\infty)^n \ni (a_1,\ldots,a_n)\mapsto \mathbb{E}h\left(a_1^{\frac{1}{q}}X_1+\cdots+a_n^\frac{1}{q}X_n\right)
    \]
    is Schur convex (resp. Schur concave).
\end{corollary}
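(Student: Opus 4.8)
The plan is to mimic the reduction already performed for $h(x)=|x|^p$ in the discussion preceding the statement, and then appeal to Proposition \ref{exchangable theorem} exactly as in the proof of Lemma \ref{lemmaENT}. Write $X_i=_dY_iZ_i$ with $Y_i$ almost surely positive, $Z_i$ symmetric $q$-stable, and all the variables independent. Conditioning on $(Y_1,\ldots,Y_n)$, the sum $\sum_i a_i^{1/q}Y_iZ_i$ is a linear combination of i.i.d. symmetric $q$-stable variables with coefficients $a_i^{1/q}Y_i$, so by the defining stability property it has the same conditional law as $\bigl(\sum_i a_iY_i^q\bigr)^{1/q}Z$, where $Z$ is symmetric $q$-stable and independent of the $Y_i$. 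Integrating out the $Y_i$ gives
\[
\mathbb{E}h\Bigl(a_1^{1/q}X_1+\cdots+a_n^{1/q}X_n\Bigr)=\mathbb{E}\,h\Bigl(\bigl(a_1Y_1^q+\cdots+a_nY_n^q\bigr)^{1/q}Z\Bigr).
\]

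First I would use that $h$ is even to absorb the factor $Z$ into the power: since $h(sz)=h(s|z|)$ and $s|z|=(s^q|z|^q)^{1/q}$ for $s\ge0$, setting $V_i:=Y_i^q|Z|^q$ yields
\[
\mathbb{E}h\Bigl(a_1^{1/q}X_1+\cdots+a_n^{1/q}X_n\Bigr)=\mathbb{E}\,h\Bigl(\bigl(a_1V_1+\cdots+a_nV_n\bigr)^{1/q}\Bigr)=\mathbb{E}\,f(a_1V_1,\ldots,a_nV_n),
\]
where $f(w_1,\ldots,w_n):=\psi(w_1+\cdots+w_n)$ and $\psi(s):=h(s^{1/q})$. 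The function $f$ is manifestly symmetric in its arguments, and since $w\mapsto w_1+\cdots+w_n$ is linear, $f$ inherits convexity (resp. concavity) from the hypothesis that $\psi=h(\,\cdot\,^{1/q})$ is convex (resp. concave) on $(0,\infty)$. It then remains to check that $(V_1,\ldots,V_n)$ is exchangeable: indeed $(V_1,\ldots,V_n)=|Z|^q\,(Y_1^q,\ldots,Y_n^q)$ is the product of the common scalar $|Z|^q$ with an exchangeable vector (the $Y_i$ being i.i.d.), and $|Z|$ is independent of the $Y_i$, so exchangeability is preserved. Proposition \ref{exchangable theorem} then applies on the invariant convex domain $[0,\infty)^n$ and shows that $a\mapsto\mathbb{E}f(a_1V_1,\ldots,a_nV_n)$ is symmetric and convex (resp. concave), hence Schur convex (resp. Schur concave), which is the claim.

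The step needing the most care is integrability, so that the expectations above are genuinely finite and Proposition \ref{exchangable theorem} applies rather than producing an identically infinite symbol. The role of the hypotheses $\mathbb{E}|h(X_1)|<\infty$ and $\mathbb{E}|h(X_1+\cdots+X_n)|<\infty$ is precisely to pin down finiteness at the boundary configurations that control the functional along the majorization order; combined with the convexity (resp. concavity) of $f$, which bounds it on any segment in terms of its endpoint behaviour, they ensure $\mathbb{E}|f(a_1V_1,\ldots,a_nV_n)|$ stays finite on the relevant range of $a$ and justify the interchange of expectation and the convexity argument. I would also record that the distributional identity in the first display is exactly where $q\in(0,2)$ (existence of genuine symmetric $q$-stable laws) enters; the case $q=2$ degenerates into the Gaussian-mixture computation already carried out in Lemma \ref{lemmaENT}.
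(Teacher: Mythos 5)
Your proof is correct and follows essentially the same route as the paper: the stability identity reduces the sum to $\bigl(\sum_i a_iY_i^q\bigr)^{1/q}Z$, and the paper likewise concludes by applying Proposition \ref{exchangable theorem} to the exchangeable variables $(Y_iZ)^q$ (your $V_i$) with the symmetric convex/concave function $\psi(w_1+\cdots+w_n)$, $\psi(s)=h(s^{1/q})$. Your added remarks on evenness of $h$ and on integrability only make explicit what the paper leaves implicit.
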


Having introduced the $q$-stable mixtures we close by introducing the following (again natural to define) family of random variables, and call them $q$-mixtures, but now for each $q>0$. They complement is some sense they $q$-stable mixtures just defined.

We say that $X$ is a $q$-mixture if $X=_dYZ_q$ where $Y$ and $Z_q$ are independent, $Y$ is positive and $Z_q$ has density $c_qe^{-|t|^q}$.

For $q\in (0,2]$ we see again that this is a (strict) subclass of the Gaussian mixtures, and in that case it seems that the two classes defined here improve upon certain features of the Gaussian mixtures.

For the case of $q$-mixtures the following is now immediate using Bernstein's Theorem and proceeding exactly as in the proof Theorem 2 of \cite{ENT}.
\begin{proposition}
    A symmetric random variable $X$ with density $\phi$ is a $q$-mixture if and only if the function $(0,\infty)\ni t\mapsto \phi(t^\frac{1}{q})$ is completely monotonic.
\end{proposition}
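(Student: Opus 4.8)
The plan is to treat this statement as the exact $q$-analogue of the Gaussian-mixture characterization (Theorem~2 of \cite{ENT}, which is the case $q=2$), with Bernstein's theorem serving as the engine in both directions. The whole argument reduces to recognizing $t\mapsto\phi(t^{1/q})$ as a Laplace transform and performing a single change of variables that relates the representing measure to the law of the mixing variable $Y$.

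For the forward direction, I would suppose $X=_d YZ_q$ with $Y>0$ and $Z_q$ of density $c_qe^{-|t|^q}$. Conditioning on $Y=y$, the variable $yZ_q$ has density $\frac{c_q}{y}e^{-|x|^q/y^q}$, so integrating against the law $P_Y$ gives $\phi(x)=c_q\int_0^\infty \frac1y e^{-|x|^q/y^q}\,dP_Y(y)$. Setting $x=t^{1/q}$ for $t>0$ and substituting $s=y^{-q}$ turns this into $\phi(t^{1/q})=\int_0^\infty e^{-ts}\,d\nu(s)$, where $\nu$ is the image of $\frac{c_q}{y}\,dP_Y(y)$ under $y\mapsto y^{-q}$. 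Being the Laplace transform of a non-negative measure, $t\mapsto\phi(t^{1/q})$ is completely monotonic by Bernstein's theorem.

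For the converse, complete monotonicity together with Bernstein's theorem supplies a non-negative Borel measure $\nu$ on $[0,\infty)$ with $\phi(t^{1/q})=\int_0^\infty e^{-ts}\,d\nu(s)$, hence $\phi(x)=\int_0^\infty e^{-|x|^qs}\,d\nu(s)$ by evenness. I would first observe that $\nu(\{0\})=0$, since an atom at the origin would add a positive constant to $\phi$, contradicting $\phi\in L^1(\mathbb{R})$. Then I would push $\nu$ back to a measure $\rho$ on $(0,\infty)$ via $y=s^{-1/q}$ and define the law of $Y$ by $dP_Y(y)=\frac{y}{c_q}\,d\rho(y)$. The key check is that $P_Y$ is a probability measure: using Fubini and the scaling $\int_\mathbb{R}e^{-|x|^qs}\,dx=s^{-1/q}\!\int_\mathbb{R}e^{-|u|^q}\,du=s^{-1/q}/c_q$, the normalization $\int_\mathbb{R}\phi=1$ becomes exactly $\frac1{c_q}\int_0^\infty y\,d\rho(y)=1$, i.e.\ $\int dP_Y=1$. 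With this $Y$ and an independent $Z_q$, reversing the computation of the forward direction shows that $YZ_q$ has density $\int_0^\infty e^{-|x|^qs}\,d\nu(s)=\phi$, so $X$ is a $q$-mixture.

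The step requiring the most care is the bookkeeping in the converse: ruling out the atom of $\nu$ at the origin (so that $y=s^{-1/q}$ is well defined and $Y$ is almost surely finite and positive), justifying the Fubini interchange in the normalization computation, and verifying that weighting by $y/c_q$ genuinely yields a probability measure rather than merely a finite one. Everything else is the routine change of variables $s=y^{-q}$ already implicit in the Gaussian-mixture argument, so once these measure-theoretic points are settled the proposition follows.
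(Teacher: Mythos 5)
Your proposal is correct and follows essentially the same route as the paper, which simply invokes Bernstein's theorem and the argument of Theorem~2 of \cite{ENT} with the change of variables $s=y^{-q}$ in place of $s=y^{-2}$; your careful handling of the atom at the origin and the normalization $\frac{1}{c_q}\int_0^\infty y\,d\rho(y)=1$ matches the bookkeeping implicit in that reference.
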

\vspace{0.5em}

\subsection{Variants of the main theorem} We would like to close with some variants of the main Theorem \ref{centralthm} and few remarks that might be useful for applications that we state for simplicity in the "one" dimensional case. The proofs follow along the same lines.

\begin{proposition}\label{generalnonproduct} Let $\mu(dx)=\phi(x)dx$ be a probability measure on $\mathbb{R}^n$, with $\phi$ even and having integrable and positive Fourier transform and $h$ continuous positive definite and assume that $\phi$ is in addition symmetric in its arguments. Then if for each $t\in \mathbb{R}$ the function $ [0,+\infty)^n\ni a\mapsto \Hat{\phi}(ta^\frac{1}{q})$ is Schur monotone then the function $H(a_1,\ldots,a_n)=\int_{\mathbb{R}^{n}}h\left( a_1^\frac{1}{q}x_1+\cdots+a_n^\frac{1}{q}x_n\right)\mu(dx)$ has the same Schur monotonicity. \end{proposition}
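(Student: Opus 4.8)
The plan is to follow the proof of Theorem \ref{centralthm} almost verbatim, the only change being that $\phi$ is no longer a product density, so instead of invoking the block–section identity (\ref{fourierofsec}) I compute the relevant Fourier transform directly on $\mathbb{R}^n$. The key point is that, after applying Bochner's theorem to $h$ and interchanging the order of integration, the integrand becomes exactly $\hat\phi(ra^{\frac{1}{q}})$, which is precisely the object on which the hypothesis is phrased.

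First I would invoke Bochner's theorem: since $h$ is continuous and positive definite, there is a finite positive Borel measure $\nu$ on $\mathbb{R}$ with $h=\hat\nu$, so that
\[
H(a)=\int_{\mathbb{R}^n}\int_{\mathbb{R}} e^{-2\pi i r\left(a_1^{\frac{1}{q}}x_1+\cdots+a_n^{\frac{1}{q}}x_n\right)}\nu(dr)\,\phi(x)\,dx.
\]
Because $\int_{\mathbb{R}}\int_{\mathbb{R}^n}|\phi(x)|\,dx\,\nu(dr)=\|\phi\|_1\,\nu(\mathbb{R})=h(0)<\infty$, Fubini applies, and the inner integral over $x$ is exactly
\[
\int_{\mathbb{R}^n} e^{-2\pi i\langle ra^{\frac{1}{q}},x\rangle}\phi(x)\,dx=\hat\phi\!\left(ra^{\frac{1}{q}}\right),\qquad ra^{\frac{1}{q}}:=\left(ra_1^{\frac{1}{q}},\ldots,ra_n^{\frac{1}{q}}\right).
\]
One thus arrives at the representation
\[
H(a)=\int_{\mathbb{R}}\hat\phi\!\left(ra^{\frac{1}{q}}\right)\nu(dr),
\]
the exact analogue of the product formula $H(a)=\int\prod_i\hat\phi(a_i^{\frac{1}{q}}s)\,d\nu(s)$ used in the proof of Theorem \ref{centralthm}.

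The conclusion is then immediate. By hypothesis, for each fixed $r\in\mathbb{R}$ the map $a\mapsto\hat\phi(ra^{\frac{1}{q}})$ is Schur monotone in the prescribed direction. In the Schur-convex case, if $a\prec b$ then $\hat\phi(ra^{\frac{1}{q}})\le\hat\phi(rb^{\frac{1}{q}})$ for every $r$; integrating this pointwise inequality against the positive measure $\nu$ gives $H(a)\le H(b)$, so $H$ is Schur convex. The Schur-concave case is identical, with the inequality reversed.

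The main (and essentially only) obstacle is bookkeeping rather than conceptual: one must justify the application of Fubini and confirm that $H$ is finite, both of which follow from $\phi\in L^1$ with $\|\phi\|_1=1$ and from $\nu(\mathbb{R})=h(0)<\infty$ (equivalently from $|h|\le h(0)$, which makes the defining integral for $H$ absolutely convergent against the probability measure $\mu$). The assumed symmetry of $\phi$ is what renders $\hat\phi$, and hence $H$, symmetric in its arguments, so that Schur monotonicity is the natural notion to consider; it plays no further role in the estimate.
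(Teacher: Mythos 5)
Your proof is correct and follows essentially the same route the paper intends (the paper only remarks that the proof "follows along the same lines" as Theorem \ref{centralthm}): apply Bochner's theorem to $h$, obtain the representation $H(a)=\int_{\mathbb{R}}\hat\phi(ra^{1/q})\,\nu(dr)$, and integrate the pointwise Schur monotonicity against the positive finite measure $\nu$. Your direct Fubini computation of $\hat\phi(ra^{1/q})$ neatly bypasses the coarea/section-function detour used in Theorem \ref{centralthm}, and the integrability bookkeeping via $\nu(\mathbb{R})=h(0)<\infty$ is handled correctly.
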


And lastly a case that cannot give Schur monotonicity, because of lack of symmetry under permutations (or in probabilistic language non-identically distributed -but still independent-  random variables), but could be of use in other situations.

\begin{proposition} \label{generaldiffmeasures}
    Let $n\in \mathbb{N}$ and for each $1\leq i \leq n$, $\mu_i(dx)=\phi_i(x)dx$ is a probability measure on $\mathbb{R}$. Suppose we are given in addition an integrable function $h:\mathbb{R}\to \mathbb{R}$ and that $\phi_i,h$ are even with positive and integrable Fourier transforms and that all $\Hat{\phi_i}((\cdot)^\frac{1}{q})$ are log-convex. Then the function $H:[0,\infty)^n\to\mathbb{R}$ defined as $H(a_1,\ldots,a_n)=\int_{\mathbb{R}^{n}}h\left( a_1^\frac{1}{q}x_1+\cdots+a_n^\frac{1}{q}x_n\right)\mu(dx)$ is log-convex.

    While the converse holds as well, i.e. if the functional $H$ is log-convex for each such $h$, then $\Hat{\phi_i}((\cdot)^\frac{1}{q})$ is log-convex for each $i=1,\ldots,n$.
\end{proposition}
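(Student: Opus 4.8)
The plan is to follow the proof of Theorem \ref{centralthm} almost verbatim, the only genuinely new feature being the absence of symmetry: since the $\mu_i$ differ, $H$ is no longer invariant under permutations of its arguments, so we can only hope for log-convexity (which, unlike Schur concavity, makes no use of symmetry). First I would record the appropriate representation of $H$. Applying the coarea formula to $F(x)=a_1^{1/q}x_1+\cdots+a_n^{1/q}x_n$ exactly as in the main theorem, the section function of the product measure $\mu_1\otimes\cdots\otimes\mu_n$ has Fourier transform $|a^{1/q}|\prod_{i}\hat{\phi_i}(a_i^{1/q}s)$ by the evident variant of (\ref{fourierofsec}). Since $h$ is even and integrable with $\hat h\ge 0$ integrable, Fourier inversion gives $h=\hat\nu$ for the finite measure $\nu(ds)=\hat h(s)\,ds$, and Parseval then yields
\[
H(a)=\int_{\mathbb{R}}\prod_{i=1}^n\hat{\phi_i}(a_i^{1/q}s)\,\hat h(s)\,ds,
\]
the factors $|a^{1/q}|$ cancelling exactly as in Theorem \ref{centralthm}. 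Absolute convergence is automatic, since $0\le\hat{\phi_i}\le\hat{\phi_i}(0)=1$ and $\hat h\in L^1$.

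For the forward implication I would argue by H\"older precisely as in Proposition \ref{sectionlogsth}. Fixing $s$, the hypothesis that $\hat{\phi_i}((\cdot)^{1/q})$ is log-convex is equivalent (in one dimension, via the rescaling $r^{1/q}s=(rs^q)^{1/q}$ and evenness of $\hat{\phi_i}$) to log-convexity of $r\mapsto\hat{\phi_i}(r^{1/q}s)$ for each $s$, whence for $a,b\in[0,\infty)^n$ and $\lambda\in(0,1)$,
\[
\prod_i\hat{\phi_i}\big((\lambda a_i+(1-\lambda)b_i)^{1/q}s\big)\le\Big(\prod_i\hat{\phi_i}(a_i^{1/q}s)\Big)^{\lambda}\Big(\prod_i\hat{\phi_i}(b_i^{1/q}s)\Big)^{1-\lambda}.
\]
Multiplying by $\hat h(s)\ge0$, integrating, and splitting $\hat h=\hat h^{\lambda}\hat h^{1-\lambda}$ before applying H\"older with exponents $1/\lambda$ and $1/(1-\lambda)$ gives $H(\lambda a+(1-\lambda)b)\le H(a)^{\lambda}H(b)^{1-\lambda}$; no symmetry of $H$ is invoked.

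For the converse I would localize as in Theorem \ref{centralthm}, the single delicate point being that the admissible $h$ must now be integrable, so I cannot take $\hat h$ to be a normalized indicator (whose inverse transform fails to be integrable). Instead, for fixed $x_0\in\mathbb{R}$ and $\varepsilon>0$ I would choose $\hat h_\varepsilon$ to be an even bump concentrating near $\pm x_0$, namely the sum of two Gaussians of width $\varepsilon$ centred at $x_0$ and $-x_0$; its inverse transform $h_\varepsilon$ is a Gaussian modulated by $\cos(2\pi x_0 t)$, hence even and integrable with positive integrable transform, so $h_\varepsilon$ lies in the admissible class. Fixing an index $i$ and testing log-convexity of $H$ along the segment joining $(0,\dots,a,\dots,0)$ and $(0,\dots,b,\dots,0)$ supported on the $i$-th coordinate, the identities $\hat{\phi_j}(0)=1$ collapse $H$ to $\int\hat{\phi_i}(a^{1/q}s)\,\hat h_\varepsilon(s)\,ds$; dividing by the mass of $\hat h_\varepsilon$ and letting $\varepsilon\to0^+$ through Lebesgue differentiation (using evenness of $\hat{\phi_i}$) produces
\[
\hat{\phi_i}\big((\lambda a+(1-\lambda)b)^{1/q}x_0\big)\le\hat{\phi_i}(a^{1/q}x_0)^{\lambda}\,\hat{\phi_i}(b^{1/q}x_0)^{1-\lambda}.
\]
Since this holds for all $x_0$, all $a,b>0$ and all $\lambda$, it gives log-convexity of $\hat{\phi_i}((\cdot)^{1/q})$ for each $i$. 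I expect the main obstacle to be exactly this last step, i.e. keeping the localizing test functions inside the restricted class of integrable $h$; the Gaussian-bump construction is what resolves it.
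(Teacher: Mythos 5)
Your proof is correct and follows the route the paper intends (the paper only remarks that the proofs of these variants ``follow along the same lines'' as Theorem \ref{centralthm}): the Fourier representation of $H$ via the coarea formula and inversion, H\"older for the forward direction, and localization along a coordinate axis for the converse. Your one genuine adaptation --- replacing the normalized-indicator test measures used in the converse of Theorem \ref{centralthm} by Gaussian bumps at $\pm x_0$, so that the test function $h_\varepsilon$ stays integrable and hence inside the admissible class of this proposition --- is exactly the adjustment the restricted hypothesis on $h$ requires, and you handle it correctly.
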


\subsection*{Acknowledgments} I am very grateful to Franck Barthe for all his help to improve the presentation, to Alexandros Eskenazis for helpful discussions and sharing his insights in related problems and to Piotr Nayar and Cyril Roberto for valuable comments and useful discussions. This work received support from the Graduate School EUR-MINT (State support managed by the National Research Agency for Future Investments program bearing the reference ANR-18-EURE-0023).

\bibliographystyle{alpha}

\begin{thebibliography}{BGMN05}

\bibitem[Bal86]{ball}
K.~Ball.
\newblock Cube slicing in {${\bf R}^n$}.
\newblock {\em Proc. Amer. Math. Soc.}, 97(3):465--473, 1986.

\bibitem[Bar01]{bartheextremal}
F.~Barthe.
\newblock Extremal properties of central half-spaces for product measures.
\newblock {\em J. Funct. Anal.}, 182(1):81--107, 2001.

\bibitem[Bar04]{bartheapprox}
F.~Barthe.
\newblock A continuous version of the {B}rascamp-{L}ieb inequalities.
\newblock In {\em Geometric aspects of functional analysis}, volume 1850 of {\em Lecture Notes in Math.}, pages 53--63. Springer, Berlin, 2004.

\bibitem[BC02]{culverhouse}
Albert Baernstein, II and Robert~C. Culverhouse.
\newblock Majorization of sequences, sharp vector {K}hinchin inequalities, and bisubharmonic functions.
\newblock {\em Studia Math.}, 152(3):231--248, 2002.

\bibitem[BDCK66]{lois}
Jean Bretagnolle, Didier Dacunha-Castelle, and Jean-Louis Krivine.
\newblock Lois stables et espaces {$L\sp{p}$}.
\newblock {\em Ann. Inst. H. Poincar\'{e} Sect. B (N.S.)}, 2:231--259, 1965/66.

\bibitem[BGMN05]{BGMN}
Franck Barthe, Olivier Gu\'{e}don, Shahar Mendelson, and Assaf Naor.
\newblock A probabilistic approach to the geometry of the {$l^n_p$}-ball.
\newblock {\em Ann. Probab.}, 33(2):480--513, 2005.

\bibitem[BN02]{BN}
F.~Barthe and A.~Naor.
\newblock Hyperplane projections of the unit ball of {$l^n_p$}.
\newblock {\em Discrete Comput. Geom.}, 27(2):215--226, 2002.

\bibitem[CKT21]{chasap}
Giorgos Chasapis, Hermann K\"onig, and Tomasz Tkocz.
\newblock From {B}all's cube slicing inequality to {K}hinchin-type inequalities for negative moments.
\newblock {\em J. Funct. Anal.}, 281(9):Paper No. 109185, 23, 2021.

\bibitem[EG24]{eskgav}
Alexandros Eskenazis and Lampros Gavalakis.
\newblock On the entropy and information of gaussian mixtures.
\newblock {\em Mathematika}, 70(2):e12246, 2024.

\bibitem[ENT18a]{ENT}
A.~Eskenazis, P.~Nayar, and T.~Tkocz.
\newblock Gaussian mixtures: entropy and geometric inequalities.
\newblock {\em Ann. Probab.}, 46(5):2908--2945, 2018.

\bibitem[ENT18b]{ENTSharpcomparison}
Alexandros Eskenazis, Piotr Nayar, and Tomasz Tkocz.
\newblock Sharp comparison of moments and the log-concave moment problem.
\newblock {\em Adv. Math.}, 334:389--416, 2018.

\bibitem[ENT24]{ENTDuke}
Alexandros Eskenazis, Piotr Nayar, and Tomasz Tkocz.
\newblock {Resilience of cube slicing in $l_p$}.
\newblock {\em Duke Mathematical Journal}, 173(17):3377 -- 3412, 2024.

\bibitem[Esk21]{eskenazis}
Alexandros Eskenazis.
\newblock On extremal sections of subspaces of {$L_p$}.
\newblock {\em Discrete Comput. Geom.}, 65(2):489--509, 2021.

\bibitem[Fel71]{feller}
William Feller.
\newblock {\em An introduction to probability theory and its applications. {V}ol. {II}}.
\newblock John Wiley \& Sons, Inc., New York-London-Sydney, second edition, 1971.

\bibitem[FHJ{\etalchar{+}}97]{Figieletc}
T.~Figiel, P.~Hitczenko, W.~B. Johnson, G.~Schechtman, and J.~Zinn.
\newblock Extremal properties of {R}ademacher functions with applications to the {K}hintchine and {R}osenthal inequalities.
\newblock {\em Trans. Amer. Math. Soc.}, 349(3):997--1027, 1997.

\bibitem[GM09]{giaquintabook}
Mariano Giaquinta and Giuseppe Modica.
\newblock {\em Mathematical analysis}.
\newblock Birkh\"auser Boston, Ltd., Boston, MA, 2009.
\newblock An introduction to functions of several variables, Translated and revised from the 2005 Italian original.

\bibitem[Had72]{hadwiger}
H.~Hadwiger.
\newblock Gitterperiodische {P}unktmengen und {I}soperimetrie.
\newblock {\em Monatsh. Math.}, 76:410--418, 1972.

\bibitem[Hen79]{hensley}
Douglas Hensley.
\newblock Slicing the cube in {${\bf R}\sp{n}$} and probability (bounds for the measure of a central cube slice in {${\bf R}\sp{n}$} by probability methods).
\newblock {\em Proc. Amer. Math. Soc.}, 73(1):95--100, 1979.

\bibitem[JM09]{c_dpseudostable}
W.~Jarczyk and J.~K. Misiewicz.
\newblock On weak generalized stability and {$(c,d)$}-pseudostable random variables via functional equations.
\newblock {\em J. Theoret. Probab.}, 22(2):482--505, 2009.

\bibitem[Kol98]{koldobsky98}
A.~Koldobsky.
\newblock An application of the {F}ourier transform to sections of star bodies.
\newblock {\em Israel J. Math.}, 106:157--164, 1998.

\bibitem[Kol05]{koldobskybook}
Alexander Koldobsky.
\newblock {\em Fourier analysis in convex geometry}, volume 116 of {\em Mathematical Surveys and Monographs}.
\newblock American Mathematical Society, Providence, RI, 2005.

\bibitem[LaO95]{latole}
R.~Lata\l~a and K.~Oleszkiewicz.
\newblock A note on sums of independent uniformly distributed random variables.
\newblock {\em Colloq. Math.}, 68(2):197--206, 1995.

\bibitem[Lew78]{lewis}
D.~R. Lewis.
\newblock Finite dimensional subspaces of {$L\sb{p}$}.
\newblock {\em Studia Math.}, 63(2):207--212, 1978.

\bibitem[Lut88]{Lutwak}
Erwin Lutwak.
\newblock Intersection bodies and dual mixed volumes.
\newblock {\em Adv. in Math.}, 71(2):232--261, 1988.

\bibitem[MM05]{firstpc_pseudostable}
J.~K. Misiewicz and G.~Mazurkiewicz.
\newblock On {$(c,p)$}-pseudostable random variables.
\newblock {\em J. Theoret. Probab.}, 18(4):837--852, 2005.

\bibitem[MOA11]{Marshall-book}
A.W. Marshall, I.~Olkin, and B.C. Arnold.
\newblock {\em Inequalities: theory of majorization and its applications}.
\newblock Springer Series in Statistics. Springer, New York, second edition, 2011.

\bibitem[MP88]{MP}
Mathieu Meyer and Alain Pajor.
\newblock Sections of the unit ball of {$L^n_p$}.
\newblock {\em J. Funct. Anal.}, 80(1):109--123, 1988.

\bibitem[Nol20]{univariatestabledistributions}
John~P. Nolan.
\newblock {\em Univariate stable distributions: models for heavy tailed data}.
\newblock Springer Series in Operations Research and Financial Engineering. Springer, Cham, [2020] \copyright2020.

\bibitem[Nol25]{nolan:book2}
John~P. Nolan.
\newblock {\em Multivariate Stable Distributions - Models for Heavy Tailed Data}.
\newblock Springer Verlag, New York, 2025.
\newblock In progress, first chapter online at https://edspace.american.edu/jpnolan/wp-content/uploads/sites/1720/2024/06/Book2Chapter1.pdf.

\bibitem[NT23]{NT}
P.~Nayar and T.~Tkocz.
\newblock Extremal sections and projections of certain convex bodies: a survey.
\newblock {\em Harmonic analysis and convexity}, 9:343--390, 2023.

\bibitem[Ole03]{pseudostable}
Krzysztof Oleszkiewicz.
\newblock On {$p$}-pseudostable random variables, {R}osenthal spaces and {$l^n_p$} ball slicing.
\newblock In {\em Geometric aspects of functional analysis}, volume 1807 of {\em Lecture Notes in Math.}, pages 188--210. Springer, Berlin, 2003.

\bibitem[Pin94]{Pinelis}
Iosif Pinelis.
\newblock Extremal probabilistic problems and {H}otelling's {$T^2$} test under a symmetry condition.
\newblock {\em Ann. Statist.}, 22(1):357--368, 1994.

\bibitem[RR91]{RR}
S.~T. Rachev and L.~R\"{u}schendorf.
\newblock Approximate independence of distributions on spheres and their stability properties.
\newblock {\em Ann. Probab.}, 19(3):1311--1337, 1991.

\bibitem[Rud91]{rudin}
Walter Rudin.
\newblock {\em Functional analysis}.
\newblock International Series in Pure and Applied Mathematics. McGraw-Hill, Inc., New York, second edition, 1991.

\bibitem[ST94]{stablebook}
Gennady Samorodnitsky and Murad~S. Taqqu.
\newblock {\em Stable non-{G}aussian random processes}.
\newblock Stochastic Modeling. Chapman \& Hall, New York, 1994.
\newblock Stochastic models with infinite variance.

\bibitem[SZ90]{SZ}
G.~Schechtman and J.~Zinn.
\newblock On the volume of the intersection of two {$L^n_p$} balls.
\newblock {\em Proc. Amer. Math. Soc.}, 110(1):217--224, 1990.

\bibitem[SZ01]{SchZv}
Gideon Schechtman and Artem Zvavitch.
\newblock Embedding subspaces of {$L_p$} into {$l^N_p$}, {$0<p<1$}.
\newblock {\em Math. Nachr.}, 227:133--142, 2001.

\bibitem[WW75]{embext}
J.~H. Wells and L.~R. Williams.
\newblock {\em Embeddings and extensions in analysis}, volume Band 84 of {\em Ergebnisse der Mathematik und ihrer Grenzgebiete [Results in Mathematics and Related Areas]}.
\newblock Springer-Verlag, New York-Heidelberg, 1975.

\end{thebibliography}
\newcommand{\etalchar}[1]{$^{#1}$}

\end{document}